\numberwithin{equation}{section}
\numberwithin{figure}{section}
\theoremstyle{plain}
\newtheorem{thm}{Theorem}[section]
  \theoremstyle{definition}
  \newtheorem{rem}[thm]{Remark}
  \theoremstyle{definition}
  \newtheorem{defn}[thm]{Definition}
  \theoremstyle{definition}
  \newtheorem{subrem}{Remark}[subsection]
  \theoremstyle{definition}
  \newtheorem{subdefn}[subrem]{Definition}
  \theoremstyle{plain}
  \newtheorem{prop}[thm]{Proposition}
  \theoremstyle{definition}
  \newtheorem{problem}[thm]{Problem}
  \theoremstyle{plain}
  \newtheorem{lem}[thm]{Lemma}
\newcommand{\R}{\mathbb{R}}
\newcommand{\Z}{\mathbb{Z}}
\newcommand{\shift}{\operatorname{shift}}
\newcommand{\diam}{\operatorname{diam}}
\newcommand{\ord}{\operatorname{ord}}
\newcommand{\mdim}{\operatorname{mdim}}
\newcommand{\edim}{\operatorname{edim}}
\newcommand{\perdim}{\operatorname{perdim}}
\newcommand{\orb}{\operatorname{orb}}
\newcommand{\co}{\operatorname{co}}
\newcommand{\spann}{\operatorname{span}}
\begin{document}

\title{Mean Dimension \& Jaworski-type Theorems}

\author{Yonatan Gutman}
\begin{abstract}
According to the celebrated Jaworski Theorem, a finite dimensional
aperiodic dynamical system $(X,T)$ embeds in the $1$-dimensional
cubical shift $([0,1]{}^{\mathbb{Z}},\shift)$. If $X$ admits periodic
points (still assuming $\dim(X)<\infty$) then we show in this paper that periodic
dimension $\perdim(X,T)<\frac{d}{2}$ implies that $(X,T)$ embeds
in the $d$-dimensional cubical shift $(([0,1]^{d})^{\mathbb{Z}},\shift)$. This verifies a conjecture by Lindenstrauss and Tsukamoto for finite dimensional systems.
Moreover for an infinite dimensional dynamical system, with the same
periodic dimension assumption, the set of periodic points can be equivariantly
immersed in $(([0,1]^{d})^{\mathbb{Z}},\shift)$. Furthermore we introduce a notion of markers for general topological dynamical
systems, and use a generalized version of the Bonatti-Crovisier tower
theorem, to show that an extension $(X,T)$ of an aperiodic finite-dimensional
system whose mean dimension obeys $\mdim(X,T)<\frac{d}{16}$ embeds
in the \textit{$(d+1)$-}cubical shift $(([0,1]^{d+1})^{\mathbb{Z}},\shift)$.

\date{\today}
\end{abstract}

\keywords{Mean dimension, periodic dimension, Jaworski's theorem, cubical shift,
marker property. }

\subjclass[2010]{37A35, 54F45.}

\maketitle

\section{Introduction\label{sec:Introduction}}

The classical Theorem of Jaworski (1974) is usually stated in the
following way: a finite-dimensional aperiodic (invertible) topological
dynamical system can be equivariantly embedded in the \textit{$1$-dimensional
cubical shift} $([0,1]^{\mathbb{Z}},\shift)$. However a careful reading
of Jaworski's PhD thesis reveals that Jaworski actually proved a
slightly stronger statement: one can replace the assumption of no
periodic points with the requirement of no periodic points of order
less or equal to $(4n+3)^{2}$ (where $n$ is the dimension of the
system). One may wonder if this constant is best possible. Indeed
it is pointed out by Coornaert (2005), that using Jaworski's original
argument, it is enough to assume there are no periodic points of order
less or equal to $6n$. One may try to find the optimal constant (it
turns out to be $2n$) but it soon becomes clear this is not the right
way to recast Jaworski's theorem. Instead, it is better to consider
the obstruction to embedding due to the set of periodic points in
its totality. Our first theorem is the statement that if $X$ is finite-dimensional
and $\perdim(X,T)<\frac{d}{2}$ (see definition in Subsection \ref{subsec:perdim}), then $(X,T)$ embeds in $(([0,1]^{d})^{\mathbb{Z}},\shift)$.
It is an old result due to Flores (1935) that one can find metric
compacta $X_{d}$ of dimension $d$ which do not embed in $\mathbb{R}^{2d}$.
Considering the trivial action $Id:X_{d}\rightarrow X_{d}$, we see
there exists systems with $\perdim(X_{d},Id)=d$ which cannot be embedded
in $(([0,1]^{2d})^{\mathbb{Z}},\shift)$. The constant in the theorem
is thus optimal. The new theorem is a particular case of a conjecture
by Lindenstrauss and Tsukamoto (2012): If $\mdim(X,T)<\frac{d}{2}$
and $\perdim(X,T)<\frac{d}{2}$ then $(X,T)$ embeds in $(([0,1]^{d})^{\mathbb{Z}},\shift)$.
While we cannot report any progress in establishing this theorem in
its totality, we have been able to prove what can be dubbed "the
easy half'' of the conjecture: namely if $\perdim(X,T)<\frac{d}{2}$
then one can find a morphism $\phi:(X,T)\rightarrow(([0,1]^{d})^{\mathbb{Z}},\shift)$
so that for $P$, the set of all periodic points,
$\phi_{|P}:(P,T)\hookrightarrow(([0,1]^{d})^{\mathbb{Z}},\shift)$
is an immersion, i.e. an injective continuous mapping.

Another direction for which Jaworski's theorem lends itself for generalization
is replacing the requirement of the phase space $X$ being finite
dimensional by the condition of possessing a finite dimensional factor.
This returns to Lindenstrauss' ingenious theorem (1999), stating that
an extension of an aperiodic minimal system with $\mdim(X,T)<\frac{d}{36}$
embeds in $(([0,1]^{d})^{\mathbb{Z}},\shift)$. We show that an extension
of an aperiodic finite dimensional system with $\mdim(X,T)<\frac{d}{16}$
embeds in $(([0,1]^{d+1})^{\mathbb{Z}},\shift)$. In order to prove
this theorem we introduce for general dynamical systems the notion of a \textit{marker},
well known in the setting of zero dimensional systems, since the fundamental
work of Krieger (1982). The key step is the statement the aperiodic
finite dimensional systems admit markers. Surprisingly this is achieved
by a certain generalization of the tower theorem of Bonatti-Crovisier
(2004).

\subsection*{Remark}
W. Winter and G. Szab\'{o} have informed me that the theorem that aperiodic
finite dimensional systems admit markers have found applications in the classification theory for C*-algebras arising from dynamical systems (see \cite{S15,HWZ12}).

\subsection*{Acknowledgements}

I would like to thank Jérôme Buzzi, Sylvain Crovisier, Tomasz Downarowicz,
Misha Gromov, Mariusz Lema\'{n}czyk, Elon Lindenstrauss, Julien Melleray,
Micha\l{}  Rams and Benjamin Weiss for helpful discussions. Special
thanks to Tomasz Downarowicz and the Wroc\l{}aw University of Technology
for hosting me for numerous times during the time I was working on
this article. I am grateful to Hanfeng Li and Masaki Tsukamoto for sending me comments on an earlier version. I thank the anonymous referee for a careful reading and many useful suggestions. The research was partially supported by the Marie Curie grant PCIG12-GA-2012-334564 and by the National Science Center (Poland)
grant 2013/08/A/ST1/00275.

\section{Preliminaries\label{sec:Preliminaries}}

We expand on some of the notions appearing in the Introduction:

\subsection{Embedding Dimension}

Let $X$ be a metric compact space and $T : ~X ~\rightarrow ~ X$ a homeomorphism.
For the topological dynamical system (t.d.s) $(X,T)$, define:

\[
\edim(X,T)=\min\{d\in\mathbb{N}\cup{\infty}|(X,T)\hookrightarrow(([0,1]^{d})^{\mathbb{Z}},\shift)
\]
This is the minimal $d$ such that there is a continuous equivariant
embedding of $(X,T)$ into the shift on the $d$-cube, the $d$-\textit{dimensional
cubical shift. }Notice that any compact metric space can be embedded
in $[0,1]^{\mathbb{N}}$ which implies $(X,T)$ is naturally embedded
in $(([0,1]^{\mathbb{N}})^{\mathbb{Z}},\shift)$, hence $\edim(X,T)$
is well defined.

\subsection{Topological Generators}

The embedding dimension can be understood as a topological dynamical
analogue to some concepts used in measured dynamics. Let $(X,\mathcal{B},\mu,T)$
be an (invertible) measure preserving system. A measurable function
$P:X\rightarrow\{1,2,\ldots,m\}$ is said to be a \textit{generating
partition} if the $I_{P}(x)\triangleq(P(T^{k}(x)))_{k\in\Z}$, $x\in X$, almost
surely separates points. Similarly let us call a continuous function
$f:X\rightarrow[0,1]^{d}$ a \textit{topological generator} if $I_{f}(x)\triangleq(f(T^{k}(x)))_{k\in\Z}$,
$x\in X$, separates points. Note that $d=\edim(X,T)$ is the minimal
$d$ such there exists a topological generator $f:X\rightarrow[0,1]^{d}$.
By Krieger's generator theorem (\cite{K70}) if $(X,\mathcal{B},\mu,T)$
is ergodic and $h_{\mu}(T)<\log(d)$ then there exists a generating
partition with $d$ elements.

\subsection{Dimension}

Let $\mathcal{C}$ denote the collection of open (finite) covers of
$X$. For $\alpha\in\mathcal{C}$ define its \textit{order} by $\ord(\alpha)=\max_{x\in X}\sum_{U\in\alpha}1_{U}(x)-1$.
Let $D(\alpha)=\min_{\beta\succ\alpha}ord(\beta)$ (where $\beta$
\emph{refines} $\alpha$, $\beta\succ\alpha$, if for every $V\in\beta$,
there is $U\in\alpha$ so that $V\subset U$). The Lebesgue covering
dimension is defined by $\dim(X)=\sup_{\alpha\in\mathcal{C}}D(\alpha)$.
According to the Menger-N\"{o}beling theorem (\cite{HW41}, Theorem V.2),
a compact metric space of dimension $d$ embeds in $[0,1]^{2d+1}$.

\subsection{Periodic Dimension}\label{subsec:perdim}

Let $P_{m}$ denote the set of points of period $\leq m$. Introduce the infinite vector $\overrightarrow{\perdim}(X,T)=\big(\frac{\dim(P_{m})}{m}\big)_{m\in \mathbb{N}}$. This vector is clearly a topological dynamical invariant. Let $d>0$. Denote by $\vec{d}=(d,d,\ldots)$ the infinite constant vector. We write $\overrightarrow{\perdim}(X,T)<\vec{d}$ (respectively $\overrightarrow{\perdim}(X,T)\leq\vec{d}$), or more simply $\perdim(X,T)<d$ (respectively $\perdim(X,T)\leq d$), if for every $m\in \mathbb{N}$, $\overrightarrow{\perdim}(X,T)|_m<d$ (respectively  $\overrightarrow{\perdim}(X,T)|_m\leq d$). If $X$ is finite dimensional, one can simplify matters by redefining $\perdim(X,T)=\sup_{m\in\mathbb{N}}\frac{\dim(P_{m})}{m}$, the point being that $\perdim(X,T)<d$ and $\perdim(X,T)\leq d$ have exactly the same meaning as perviously.
 It is easy to see that $\perdim(X,T)\leq \edim(X,T)$ for any t.d.s $(X,T)$.

\subsection{Mean Dimension}

Define:

\[
\mdim(X,T)=\sup_{\alpha\in\mathcal{C}}\lim_{n\rightarrow\infty}\frac{D(\alpha^{n})}{n}
\]
where $\alpha^{n}=\bigvee_{i=0}^{n-1}T^{-i}\alpha.$ Mean dimension
was introduced by Gromov \cite{G} and systematically investigated by Lindenstrauss and Weiss
in \cite{LW}. It is easy to see $\edim(X,T)\geq \mdim(X,T)$ for any
t.d.s $(X,T)$.
\begin{subrem}
\label{Rem: mdim with linear shift}For any $k\in\mathbb{N}$, $\mdim(X,T)=\sup_{\alpha\in\mathcal{C}}\lim_{n\rightarrow\infty}\frac{D(\alpha^{n+k})}{n}$.
\end{subrem}

\subsection{Lindenstrauss-Tsukamoto Embedding Conjecture}

According to Conjecture 1.2 of \cite{LT12} it should hold for all
t.d.s $(X,T)$ that if  $\mdim(X,T)<\frac{d}{2}$ and $\perdim(X,T)<\frac{d}{2}$, then $\edim(X,T)\leq d$.
This can be compared to Krieger's embedding theorem in symbolic dynamics
(\cite{K82}): If $(X,T)$ and $(Y,S)$ are irreducible shifts of
finite type so that there exists an embedding of the set of periodic points of $X$ into the set periodic points of $Y$, $(P(X),T)\hookrightarrow(P(Y),S)$
and $h_{top}(X,T)<h_{top}(Y,S)$, then there exists an embedding $(X,T)\hookrightarrow(Y,S)$.
By Flores' construction (\cite{F35}), as explained in the Introduction,
for every $n\in\mathbb{N}$, there are finite dimensional t.d.s $(X_{n},T)$ for which
$2n+1=\edim(X_{n},T)>2\perdim(X_{n},T)=2n$. By subsection 1.8 of \cite{G11}, for every $n\in\mathbb{N}$, there are t.d.s $(Y_{n},T)$ such that $2n+1=\edim(Y_{n},T)>2\mdim(Y_{n},T)=2n$. It is easy to see that for these examples it also holds $\perdim(Y_{n},T)=n$. Remarkably
by Theorem 1.3 of \cite{LT12}, for every $n\in\mathbb{N}$, there
are \textit{minimal} t.d.s $(Z_{n},T)$ for which $\edim(Z_{n},T)>2\mdim(Y_{n},T)=n$. However, unlike the condition on periodic points in Krieger's embedding theorem which is necessary for embedding, the conditions in the Lindenstrauss-Tsukamoto embedding conjecture are not necessary. That is, for some systems it holds that $\frac{d}{2}\leq\mdim(X,T)\leq d$ and $\frac{d}{2}\leq\perdim(X,T)\leq d$  and $(X,T)$  still embeds in $(([0,1]^{d})^{\mathbb{Z}},\shift)$, e.g $(X,T)=(([0,1]^{d})^{\mathbb{Z}},\shift)$.

\subsection{Jaworski's Theorems\label{sub:Jaworski's-Theorems}}

Jaworski's PhD Thesis (\cite{J74}) is the first attempt to bound
$\edim(X,T)$. Jaworski showed that if $\dim(X)=n$ and $P_{(4n+3)^{2}}=\emptyset$,
then $\edim(X,T)=1$. Coornaert (\cite{Coo05}, Chapter 8, Exercise
10) noted that it is enough to assume $P_{6n}=\emptyset$. In addition
Jaworski proved that if $X$ is a differentiable manifold of
dimension $n$ and $T:X\rightarrow X$ is a $C^{r}$-diffeomorphism ($r\geq 1$),
then every equivariant $C^{r}$-embedding of $(P_{(2n+1)^{2}},T)$
in $(([0,1])^{\mathbb{Z}},\shift)$ extends to an equivariant $C^{r}$-embedding
of $(X,T)$ in $(([0,1])^{\mathbb{Z}},\shift)$. In the literature
the name Jaworski's theorem has been attached to Theorem 9 in Chapter
13 of \cite{A}: An aperiodic finite dimensional t.d.s $(X,T)$ has
$\edim(X,T)=1$.

\subsection{Some Known Embedding Theorems}

in \cite{L99} Lindenstrauss proved that $\edim(X,T)\leq\lfloor36\mdim(X,T)\rfloor+1$
for $(X,T)$ an extension of an aperiodic minimal t.d.s. In \cite{Gut12b}
it is shown one obtains the same bound if $(X,T)$ an extension of either
an aperiodic t.d.s with a countable number of minimal subsystems or
an aperiodic finite-dimensional t.d.s. In \cite{GutTsu12} the Lindenstrauss-Tsukamoto conjecture
is verified for extensions of aperiodic subshifts (subsystems
of $\left(\{1,2,\dots,l\}^{\mathbb{Z}},\mathrm{shift}\right)$ for
some $l\in\mathbb{N}$). In \cite{G11} it was shown that
for $\mathbb{Z}^{k}$-actions ($k\in\mathbb{N})$ there exist constants
$C(k)>0$ so that if $(\mathbb{Z}^{k},X)$ is an extension a $\mathbb{Z}^{k}$ zero
dimensional t.d.s then $\edim((\mathbb{Z}^{k},X)\leq\lfloor C(k)\mdim(X,T)\rfloor+1$.

\subsection{The Baire Category Theorem Framework\label{sub:The-Baire-Category}}

All results mentioned in the previous subsection were proven through
the Baire category theorem. The same is true for the results in this
article so we now introduce the basic concepts of this technique.
A continuous mapping $f:X\rightarrow[0,1]^{d}$ induces a continuous
$\mathbb{Z}$-equivariant mapping $I_{f}:(X,T)\rightarrow(([0,1]^{d})^{\mathbb{Z}},\shift)$
given by $x\mapsto(f(T^{k}x))_{k\in\mathbb{Z}}$, also known as the
\textit{orbit-map}. Conversely, any $\mathbb{Z}$-equivariant continuous
factor map $\pi:(X,T)\rightarrow(([0,1]^{d})^{\mathbb{Z}},\shift)$
is induced in this way by $\pi_{0}:X\rightarrow[0,1]^{d}$, the projection
on the zeroth coordinate. It is therefore useful to study the space
of continuous functions $C(X,[0,1]^{d})$. Under suitable assumptions,
instead of explicitly constructing a $f\in C(X,[0,1]^{d})$ so that
$I_{f}:(X,T)\hookrightarrow(([0,1]^{d})^{\mathbb{Z}},\shift)$ is an
embedding, one shows that the property of being an embedding $I_{f}:(X,T)\hookrightarrow(([0,1]^{d})^{\mathbb{Z}},\shift)$
is \textit{generic} in $C(X,[0,1]^{d})$ (but without exhibiting an
explicit embedding). To make this precise introduce the following
definition:
\begin{subdefn}
Let $K\subset (X\times X)\setminus\triangle$ ($\triangle=\{(x,x)|\, x\in X\}$
denotes the \textit{diagonal} of $X\times X$) and $f\in C(X,[0,1]^{d})$. We say that
$I_{f}$ is \textbf{$K$-compatible} if for every $(x,y)\in K$, $I_{f}(x)\neq I_{f}(y)$,
or equivalently if for every $(x,y)\in K$, there exists $n\in\mathbb{Z}$
so that $f(T^{n}x)\neq f(T^{n}y)$. Define:
\[
D_{K}=\{f\in C(X,[0,1]^{d})|\, I_{f}\,\,\mathrm{is\,\, K}-\mathrm{compatible}\}
\]
\end{subdefn}
Under suitable assumptions one shows that for every $(x,y)\in(X\times X)\setminus\triangle$
there is an open $U\subset\overline{U}\subset(X\times X)\setminus\triangle$
with $(x,y)\in U$ so that $D_{\overline{U}}$ is open and dense in
$(C(X,[0,1]^{d}),||\cdot||_{\infty})$ (|$|\cdot||_{\infty}$ denotes
the the supremum metric, $||f-g||_{\infty}\triangleq sup_{x\in X}||f(x)-g(x)||_{\infty}$).
As $X\times X$ is second-countable, every subspace is Lindelöf, i.e.
every open cover has a countable subcover. One can therefore cover
$(X\times X)\setminus\triangle$ with a countable closed cover $\overline{U}_{1},\overline{U}_{2},\ldots$
so that $D_{\overline{U}_{m}}$ is open and dense in $(C(X,[0,1]^{d}),||\cdot||_{\infty})$
for all $m.$ By the Baire category theorem $(C(X,[0,1]^{d}),||\cdot||_{\infty})$,
is a \textit{Baire space}, i.e., a topological space where the intersection
of countably many dense open sets is dense. This implies $\bigcap_{m=1}^{\infty}D_{\overline{U}_{m}}$
is dense in $(C(X,[0,1]^{d}),||\cdot||_{\infty})$. Any $f\in\bigcap_{m=1}^{\infty}D_{\overline{U}_{m}}$
is $\overline{U}_{m}$-compatible for all $m$ simultaneously and therefore
realizes an embedding $I_{f}:(X,T)\hookrightarrow(([0,1]^{d})^{\mathbb{Z}},\shift)$.
A set in a topological space is said to be \textit{comeagre} or \textit{generic} if it
is the complement of a countable union of nowhere dense sets. A set
is said to be $G_{\delta}$ if it is the countable intersection of
open sets. As a dense $G_{\delta}$ set is comeagre, the above argument shows
that the set $\mathcal{A}\subset C(X,[0,1]^{d})$ for which $I_{f}:(X,T)\hookrightarrow(([0,1]^{d})^{\mathbb{Z}},\shift)$
is an embedding is comeagre, or equivalently, that the fact of $I_{f}$
being an embedding is generic in $(C(X,[0,1]^{d}),||\cdot||_{\infty})$.

\subsection{Overview of the article}
Section \ref{sec:Notation} contains notation. In Section \ref{sec:Periodic-Embedding} it is shown that the set of periodic points can be immersed in a suitable cubical shift whose dimension is controlled by the periodic dimension of the system. In Section \ref{sec:The-Marker-Property} the marker property is introduced. In Section \ref{sec:Bonatti-Crovisier} it is shown that aperiodic finite dimensional systems have the marker property by generalizing the Bonatti-Crovisier tower theorem. In Section \ref{sec:The Strong Topological Rokhlin Property} the strong topological Rokhlin property is introduced. In Section \ref{sec:Finite-dimensional-Jaworski-type} the Lindenstrauss-Tsukamoto conjecture is proven for finite dimensional systems. In Section \ref{sec:Infinite-dimensional-Jaworski-type} a theorem bounding the embedding dimension of an extension of an aperiodic finite dimensional system is established. The Appendix contains auxiliary lemmas.

\section{Notation\label{sec:Notation}}

In this section we gather most of the notation of the paper for the convenience of the reader.

\subsection{The system.}

Throughout the article $(X,T)$ denotes an invertible topological
dynamical system (t.d.s). $X$ is assumed to be compact and metric
with metric $d$. Denote the \emph{diagonal} of $X\times X$ by $\triangle\triangleq{\{(x,x)|\, x\in X\}}$. Let $P$ denote the set of periodic points of $(X,T)$. Let $P_{n}=\{x\in X|\,\,\exists1\leq m\leq n\,\, T^{m}x=x\}\subset P$,
be the set of periodic points of period\textbf{ $\leq n$} of $X$
and $H_{n}=P_{n}\setminus P_{n-1}$. $\dim(X)$ denotes the (Lebesgue
covering) dimension of $X$. Let $\orb(x)\triangleq\{T^{k}x\}{}_{k=-\infty}^{\infty}$. For $\alpha,\beta$ (finite) open covers of $X$, the \emph{join} of $\alpha$ and
$\beta$ is the open cover by all sets of the form $A\cap B$ where
$A\in\alpha$, $B\in\beta$. Denote $\alpha^{N}\triangleq\bigvee_{k=0}^{N-1}T^{-k}\alpha$.
For $f:X\rightarrow[0,1]^{d}$ denote by $I_{f}:(X,T)\rightarrow(([0,1]^{d})^{\mathbb{Z}},\shift)$
the $\mathbb{Z}$-equivariant induced map $x\mapsto(f(T^{k}x))_{k\in\mathbb{Z}}$.

\subsection{Distances.}

For $A,B\subset X$ closed sets denote $dist(A,B)\triangleq\min_{x\in A,y\in B}d(x,y)$.
For $A\subset X$, let $\diam(A)\triangleq\sup_{x,y\in A}d(x,y)$.
If $x\in X$ and $\epsilon>0$, let $B_{\epsilon}(x)=\{y\in X|\, d(y,x)<\epsilon\}$
denote the open ball around $x$. We denote $\overline{B}_{\epsilon}(x)=\overline{B_{\epsilon}(x)}$.
Note $\overline{B}_{\epsilon}(x)\subseteq\{y\in X|\, d(y,x)\leq\epsilon\}$
but equality does not necessary hold. Similarly let $B_{\epsilon}(F)=\{y\in X|\,\exists x\in F,\, d(y,x)<r\}$
be the ``$\epsilon$-tube'' around a set $F\subset X$. We also
define ``inward $\epsilon$-tubes'' $B_{-\epsilon}(F)=\{y\in F|\, d(y,F^{c})>\epsilon\}$.
The supremum norm in $\mathbb{R}^{l}$, $l\geq2$ is denoted by $||\cdot||_{\infty}$. $C(X,[0,1]^{d})$ stands for the collection
of continuous functions $f:X\rightarrow[0,1]^{d}$, equipped
with the supremum metric $||f-g||_{\infty}\triangleq sup_{x\in X}||f(x)-g(x)||{}_{\infty}$.
If $A\subset X$ is a closed set we denote $||f-g||_{A,\infty}\triangleq sup_{x\in A}||f(x)-g(x)||{}_{\infty}$. If $A=X$ one simply writes $||f-g||_{\infty}$.

\subsection{Tuples.}

For $V=\{v_{1},\ldots,v_{m}\}\subset\mathbb{R}^{n}$, denote by
$\co{(V)}\triangleq{\{\sum_{i=1}^{m}\lambda_{i}v_{i}|\ \sum_{i=1}^{m}\lambda_{i}=1,\lambda_{i}\geq0\}}$ the \textit{convex hull} of $V$.
Let $K$ be a topological space. For a tuple $v=(v_{i})_{i=0}^{n-1}\in K^{n}$
and $0\leq a\leq b\leq n-1$, we let $v|_{a}^{b}\triangleq(v_{i})_{i=a}^{b}$
and $v|_{a}\triangleq v_{a}$. If $I=\{i_{1},\ldots,i_{l}\}\subset{\{0,\ldots,n-1\}}$
with $i_{1}<\cdots<i_{l}$, we let $v|_{I}\triangleq(v_{i_{k}})_{k=1}^{l}$.
If $n\geq m$ and $v=(v_{i})_{i=0}^{m-1}\in K^{m}$, then $v^{\oplus n}\in K^{n}$
is defined by $v^{\oplus n}|_{k}\triangleq v|_{(k\mod m)}$ and for
$1\leq l\leq m-1$, $v^{\bullet l}\in K^{m}$ is defined by $v^{\bullet l}|_{k}\triangleq v_{(k+l\mod m)}$.
A statement of the form ``Property \textbf{P} holds for $(v_{1},v_{2},\ldots,v_{m})\in([0,1]{}^{n})^{m}$
almost surely (w.r.t Lebesgue measure)'' is to be understood in the
following way: $\mu(\{(v_{ij})\in[0,1]^{n}{}^{m}|\,\mathbf{P\textrm{ holds for }}(v_{ij})\}=1$
where $\mu$ is the Lebesgue measure of $[0,1]^{n}{}^{m}$.

\subsection{Miscellaneous }

If $\mathcal{U}={\{C_{1},C_{2},\ldots,C_{n}\}}$, then $\bigcup\mathcal{U}\triangleq C_{1}\cup C_{2}\cup\cdots C_{n}$.
Given an open cover $\alpha$ of $X$ and $\{\psi_{U}\}_{U\in\alpha}$ a
partition of unity subordinate to $\alpha$, define for $x\in X$,
$\alpha_{x}=\{U\in\alpha|\,\psi_{U}(x)>0\}$. Let $\mathbb{R}_{+}={\{x\in\mathbb{R}|\, x\geq0\}}$
and $\mathbb{Z}_{+}=\mathbb{N}\cup\{0\}$. For $r\in\R$, $[r]=\lfloor r\rfloor=\max\{n\in\Z\,|\, n\leq r\}$
denotes the floor function. $\lceil r\rceil=\min\{n\in\Z\,|\, n\geq r\}$
denotes the ceiling function. ${\{r\}}=r-\lfloor r\rfloor$ denotes
the fractional part function. Let $N\in\mathbb{N}$. Denote $\mathbb{Z}_{N}=\mathbb{Z}/N\mathbb{Z}$.
Addition in $\mathbb{Z}_{N}$ is denoted by $a\oplus b$.

\section{Immersion of the set of  periodic points}\label{sec:Periodic-Embedding}
\begin{thm}
\label{thm:periodic case of embedding conjecture}Assume $\perdim(X,T)<\frac{d}{2}$
for some $d\in\mathbb{N}$. Then the set of continuous mappings $f:X\rightarrow[0,1]^{d}$
so that $I_{f}:(P,T)\hookrightarrow(([0,1]^{d})^{\mathbb{Z}},\shift)$
is an immersion is comeagre in $C(X,[0,1]^{d})$.\end{thm}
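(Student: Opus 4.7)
My plan is to apply the Baire category framework of Subsection~\ref{sub:The-Baire-Category}. First, I cover the complement of the diagonal in $P \times P$ by the countable family of compact sets
\[
K_{n,m,\ell} = \{(x,y) \in P_n \times P_m : d(x,y) \geq 1/\ell\}, \quad n,m,\ell \in \mathbb{N},
\]
and show that each $D_{K_{n,m,\ell}} \subset C(X,[0,1]^d)$ is open and dense; Baire then yields a comeagre set of $f$ for which $I_f|_P$ is injective, i.e.\ an immersion. Openness follows in the standard way from compactness of $K_{n,m,\ell}$.

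For density, fix $K = K_{n,m,\ell}$, $f_0 \in C(X,[0,1]^d)$, and $\delta > 0$, and seek $f$ with $\|f - f_0\|_\infty < \delta$ and $I_f(x) \neq I_f(y)$ for every $(x,y) \in K$. Put $M = \mathrm{lcm}(n,m)$ and $L = P_n \cup P_m$; since $T^M|_L = \mathrm{id}_L$, the orbit-map on $L$ is encoded in the finite profile $F_M^f(z) = (f(T^j z))_{j=0}^{M-1} \in [0,1]^{Md}$, and $K$-compatibility becomes the finite-dimensional condition $F_M^f(x) \neq F_M^f(y)$ on $K$. The hypothesis $perdim(X,T)<d/2$ forces $\dim P_p < pd/2$ for every $p$, hence $\dim L < Md/2$. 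Given a $g \in C(L,[0,1]^d)$ close to $f_0|_L$ whose profile $F_M^g$ separates every pair in $K$, a coordinate-wise Tietze extension produces the required $f$ on $X$.

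The heart of the proof, and the main obstacle, is the following equivariant Menger--N\"{o}bling-type statement: the set of $g \in C(L,[0,1]^d)$ for which $F_M^g : L \to [0,1]^{Md}$ is injective is comeagre. I plan to prove it by stratifying $L = \bigsqcup_{p \mid M} H_p$ according to the exact period. On $H_p$ the profile $F_M^g$ takes values in the $p$-periodic subspace of $[0,1]^{Md}$, naturally identified with $[0,1]^{pd}$, and is $T$-equivariant with respect to the cyclic shift action of $\mathbb{Z}_p$; so the effective target for the orbit space $H_p/T$ is the quotient $[0,1]^{pd}/\mathbb{Z}_p$, of dimension $pd$. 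Since $\dim(H_p/T) \leq \dim H_p \leq \dim P_p < pd/2$, we lie precisely in the Menger--N\"{o}bling range, and a generic choice of orbit profiles injects $H_p/T$ into $[0,1]^{pd}/\mathbb{Z}_p$; an additional codimension-$d$ genericity ensures each profile has minimal period exactly $p$, and cross-stratum collisions are then automatic, since equal sequences have equal minimal periods. The main technical difficulty is to weld these per-stratum genericity statements into a single Baire category argument on $C(L,[0,1]^d)$, which requires a Lindel\"{o}f-type reduction to countably many closed separation conditions, each handled by standard dimension-theoretic perturbation lemmas.
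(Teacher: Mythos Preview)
Your overall strategy is sound, but there is one concrete slip and one point where your sketch hides exactly the work the paper carries out explicitly.

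First the slip: with $L=P_n\cup P_m$ and $M=\mathrm{lcm}(n,m)$ it is \emph{not} true that $T^M|_L=\mathrm{id}_L$. Recall $P_n$ consists of points of \emph{minimal} period $\le n$, so e.g.\ a point of period $2$ lies in $P_5$, yet $T^{\mathrm{lcm}(5,7)}=T^{35}$ does not fix it. You need $M=\mathrm{lcm}(1,2,\dots,\max(n,m))$ (or simply work in $P_{\max(n,m)}$); with this correction your dimension bound $\dim L<Md/2$ and the rest of the outline survive.

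Second, your ``equivariant Menger--N\"{o}beling'' step is not a black-box lemma; the target $[0,1]^{pd}/\mathbb{Z}_p$ is an orbifold, and genericity is taken in the constrained space of equivariant maps, so the classical theorem does not apply directly. When you unpack it you are forced back to the local picture: cover $H_p$ by sets $U$ with $U,TU,\dots,T^{p-1}U$ pairwise disjoint, observe that on such a tower the profile $F^g_p|_U$ ranges freely over $C(U,[0,1]^{pd})$, and then do dimension counting (including the same-orbit case $y=T^kx$, which becomes ``avoid the shift-invariant subspaces''). This is exactly what the paper does, only the paper organises it differently: it refines the cover of $(P\times P)\setminus\Delta$ \emph{from the start} into small products $\overline V_x\times\overline V_y$ already adapted to the disjoint-iterates structure (with a separate case for $y\in\mathrm{orb}(x)$), and proves density of each $D_{\overline V_x\times\overline V_y}$ by an explicit partition-of-unity construction together with an affine-independence perturbation lemma (the paper's Lemmas~\ref{lem:F periodic disjoint case} and~\ref{lem:F periodic translation case}), rather than invoking Menger--N\"{o}beling abstractly.

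So the two routes converge: your coarser cover $K_{n,m,\ell}$ and quotient-space heuristic give a cleaner conceptual picture, while the paper's finer cover trades this for a completely explicit density proof with no appeal to an equivariant embedding theorem. Either way the substantive work is the same local perturbation argument on towers with disjoint floors.
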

\begin{proof}
Note $(P\times P)\setminus\triangle=\big((\stackrel{\circ}{\bigcup}_{n\in\mathbb{N}}H_{n})\times(\stackrel{\circ}{\bigcup}_{n\in\mathbb{N}}H_{n})\big)\setminus\triangle=\big(\stackrel{\circ}{\bigcup}_{n\in\mathbb{N}}H_{n}\times H_{n})\setminus\triangle\big)\cup\stackrel{\circ}{\bigcup}_{n\neq m}H_{n}\times H_{m}$, where $H_{n}$ is the set of $x\in X$, whose minimal period is $n$ ($H_{n}=P_{n}\setminus P_{n-1}$). Let $n\in\mathbb{N}$ and fix distinct $x,y\in H_{n}$. Either  $x,Tx,\ldots,T^{n-1}x,y,Ty,\ldots,T^{n-1}y$ are pairwise distinct or $y=T^{k}x$ for some $1\leq k\leq n-1$. Thus by Propositions \ref{Prop: U_x times U_y is dense} and \ref{Prop:periodic} below one may find open neighborhoods in $H_{n}$ (but
not necessarily open in $X$) $R_x$ and $R_y$ of $x$ and $y$  respectively so that for $K_{(x,y)}=\overline{R}_{x}\times\overline{R}_{y}$,
$D_{K_{(x,y)}}$ is dense in $C(X,[0,1]^{d})$ and $K_{(x,y)}\cap\triangle=\emptyset$.
As $X$ is second-countable, every subspace is a Lindelöf space, i.e.
every open cover has a countable subcover. We can therefore choose an open
countable cover of $\stackrel{\circ}{\bigcup}_{n\in\mathbb{N}}(H_{n}\times H_{n})\setminus\triangle$
by $\mathcal{U}=\{R_{x_{1}}\times R_{y_{1}},R_{x_{2}}\times R_{y_{2}},\ldots\}$. Thus there is a closed countable cover of $\stackrel{\circ}{\bigcup}_{n\in\mathbb{N}}(H_{n}\times H_{n})\setminus\triangle$
by $\overline{\mathcal{U}}\triangleq\{K_{(x_{1},y_{1})},K_{(x_{2},y_{2})},\ldots\}$. Now let $n,m\in\mathbb{N}$ with $n\neq m$ and fix $x\in H_{n}$ and $y\in H_{m}$. Note $x,Tx,\ldots,T^{n-1}x,y,Ty,\ldots,T^{m-1}y$ are pairwise distinct and thus yet again by Proposition \ref{Prop: U_x times U_y is dense}  below one may find open neighborhoods in $H_{n}$ (but
not necessarily open in $X$) $R_x$ and $R_y$ of $x$ and $y$  respectively so that for $K_{(x,y)}=\overline{R}_{x}\times\overline{R}_{y}$,
$D_{K_{(x,y)}}$ is dense in $C(X,[0,1]^{d})$ and $K_{(x,y)}\cap\triangle=\emptyset$. Similarly to above choose a closed
countable cover $\overline{\mathcal{V}}\triangleq\{K_{(x'_{1},y'_{1})},K_{(x'_{2},y'_{2})},\ldots\}$ of $\stackrel{\circ}{\bigcup}_{n\neq m}H_{n}\times H_{m}$. By Lemma \ref{lem:D_K is open} for
any $(x,y)\in (P\times P)\setminus\triangle$, $D_{K_{(x,y)}}$ is open in $C(X,[0,1]^{d})$. Thus there is a closed countable cover of $(P\times P)\setminus\triangle$
by closed sets $\overline{\mathcal{U}}\cup \overline{\mathcal{V}}=\{K_1,K_2,\ldots\}$
so that $D_{K_i}$ is open and dense in $C(X,[0,1]^{n})$
for $i\in\mathbb{N}$.
Define the comeagre set $\mathcal{A}=\bigcap_{K\in
\overline{\mathcal{U}}\cup \overline{\mathcal{V}}}D_{K}$.
Clearly for any $f\in\mathcal{A}$, $I_{f}:(P,T)\hookrightarrow(([0,1]^{d})^{\mathbb{Z}},\shift)$ is injective
(see Subsection \ref{sub:The-Baire-Category} for more explanations).
\end{proof}

\begin{prop}
\label{Prop: U_x times U_y is dense} Let $n_1,n_2\in \mathbb{N}$.
Given a periodic points $x\in X$ whose minimal period is $n_1$ and $y\in X$ whose minimal period is $n_2$ such that $x,Tx,\ldots,T^{n_1-1}x,y,Ty,\ldots,T^{n_2-1}y$ are pairwise distinct,
there exist open neighborhoods $R_1$ and $R_2$ of $x$ and $y$ respectively such that
for
$K_{(x,y)}=\overline{R}_{1}\times\overline{R}_{2}$, the set of $K_{(x,y)}$-compatible functions
is dense in $C(X,[0,1]^{d})$ and $K_{(x,y)}\cap\triangle=\emptyset$.
\end{prop}
\begin{proof}
As $x,Tx,\ldots,T^{n_1-1}x,y,Ty,\ldots,T^{n_2-1}y$ are pairwise distinct, one may select $\epsilon>0$ such that $\overline{B}_{\epsilon}(x),T\overline{B}_{\epsilon}(x),\ldots,T^{n_1-1}\overline{B}_{\epsilon}(x),\overline{B}_{\epsilon}(y),T\overline{B}_{\epsilon}(y),\ldots,T^{n_2-1}\overline{B}_{\epsilon}(y)$ are pairwise disjoint. Define $R_{1}=B_{\epsilon}(x)\cap H_{n_1}$ and $R_{2}=B_{\epsilon}(y)\cap H_{n_2}$. Notice that $R_{1}$ and $R_{2}$  are open in $H_{n_1}$ and $H_{n_2}$ respectively (but
not necessarily open in $X$). We assume $\epsilon$ to be small enough such that
$\overline{R}_{1}$ and $\overline{R}_{2}$ (closure in $X$) are contained in $H_{n_1}$ and $H_{n_2}$ respectively. E.g.
if $d(x,P_{n-1})=r>0$, and $0<\epsilon<r$ then
$\overline{R}_{1}=\overline{B}_{\epsilon}(x)\cap P_{n_1}$ obeys $\overline{R}_{1}=\overline{B}_{\epsilon}(x)\cap H_{n_1}$.

Assume w.l.o.g $n_{1}\geq n_{2}$.
 Fix $\epsilon>0$. Let $\tilde{f}:X\rightarrow[0,1]^{d}$
be a continuous function. We will show that there exists a continuous
function $f:X\rightarrow[0,1]^{d}$ so that $\|f-\tilde{f}\|_{\infty}<\epsilon$
and $I_{f}$ is $(\overline{R}_{1}\times\overline{R}_{2})$-compatible.
As $\frac{\ensuremath{\dim(\overline{R}_{i})}}{n_{i}}<\frac{d}{2}$,
one can choose open covers $\alpha_{i}$ of $\overline{R}_{i}$ with
$ord(\alpha_{i})<\frac{n_{i}d}{2}$ so that $\max_{U\in\alpha_{i},0\leq k\leq n_{i}-1}\diam(\tilde{f}(T^{k}U))<\frac{\epsilon}{2}$. For each $U\in\alpha_{i}$ choose $q_{U}\in U$, so that $\{q_{U}\}_{U\in \alpha_i}$  is a collection of distinct points in $\overline{R}_i$ and define $\tilde{v}_{U}=(\tilde{f}(T^{j}q_{U}))_{j=0}^{n_{i}-1}$.
As $\overline{R}_{1},\overline{R}_{2}$
are disjoint, by Lemma \ref{lem:F periodic disjoint case}
one can find continuous functions $F_{i}:\overline{R}_{i}\rightarrow([0,1]^{d})^{n_{i}}$,
with the following properties ($i=1,2$):
\begin{enumerate}
\item \label{enu:Very Close-1}$\forall U\in\alpha_{i}$, $||F_{i}(q_{U})-\tilde{v}_{U}||_{\infty}<\frac{\epsilon}{2}$,
\item \label{enu:convex combination-1}$\forall x\in X$, $F_{i}(x)\in \co\{F(q_{U})|\, x\in U\in\alpha_{i}\}$,
\item \label{enu:F(x)neqF oplus n (y)-1}If $x'\in\overline{R}_{1}$ and
$y'\in\overline{R}_{2}$ then $F_{1}(x')\neq(F_{2}(y'))^{\oplus n_{1}}$
(recall that for $v=(v_{i})_{i=0}^{n_{2}-1}\in([0,1]^{d})^{n_{2}}$,
$v^{\oplus n_{1}}\in([0,1]^{d})^{n_{1}}$ is defined by $v^{\oplus n_{1}}|_{k}=v|_{(k\mod n_{2})}$, $0\leq k\leq n_{1}-1$).
\end{enumerate}
\noindent
Using that $\overline{R}_{1},T\overline{R}_{1},\ldots,T^{n_{1}-1}\overline{R}_{1},\overline{R}_{2},T\overline{R}_{2},\ldots,T^{n_{2}-1}\overline{R}_{2}$
are pairwise disjoint, define for $z\in\overline{R}_{i}$ and $0\leq k\leq n_{i}-1$:
\[
f_{|T^{k}\overline{R}_{i}}(T^{k}z)\triangleq F_{i}(z)|_{k}
\]
Notice that for $z\in\overline{R}_{2}$ and $0\leq k\leq n_{1}-1$
it holds:

\[
f_{|T^{_{k}}\overline{R}_{2}}(T^{k}z)=(F_{2}(z))^{\oplus n_{1}}|_{k}
\]
Indeed if $k_{1}=k_{2}\mod n_{2}$, $0\leq k_{1}\leq n_{1}-1$ and
$0\leq k_{2}\leq n_{2}-1$, then $T^{k_{1}}z=T^{k_{2}}z$
and $f_{|T^{_{k_{1}}}\overline{R}_{2}}(T^{k_{1}}z)=f_{|T^{_{k_{2}}}\overline{R}_{2}}(T^{k_{2}}z)=(F_{2}(z))^{\oplus n_{1}}|_{k_{1}}$
by definition.

Define $B=\bigcup_{k=0}^{n_{1}-1}T^{k}\overline{R}_{1}\cup\bigcup_{k=0}^{n_{2}-1}\overline{R}_{2}$.
As by property (\ref{enu:convex combination-1}) above, for
$z\in\overline{R}_{i}$ and $0\leq k\leq n_{i}-1$, $f(T^{k}z)=F_i(z)_{|k}\in \co\{F_i(q_{U})_{|k}|\, z\in U\in\alpha_{i}\}$, one has $||f(T^{k}z)-\tilde{f}(T^{k}z)||_{\infty}\leq\max_{z\in U\in\alpha_{i}}||F_{i}(q_{U})|_{k}-\tilde{f}(T^{k}z)||_{\infty}$.
Fix $U\in\alpha_{i}$ and $z\in U$. Note $||F_{i}(q_{U})|_{k}-\tilde{f}(T^{k}z)||_{\infty}\leq||F_{i}(q_{U})|_{k}-\tilde{v}_{U}|_{k}||_{\infty}+||\tilde{v}_{U}|_{k}-\tilde{f}(T^{k}z)||_{\infty}$.
The first term on the right-hand side is bounded by $\frac{\epsilon}{2}$
by property (\ref{enu:Very Close-1}). As $\diam(\tilde{f}(T^{k}U))<\frac{\epsilon}{2}$
and $\tilde{v}_{U}|_{k}=\tilde{f}(T^{k}q_{U})$ we have $||\tilde{f}(T^{k}q_{U})-\tilde{f}(T^{k}z)||_{\infty}<\frac{\epsilon}{2}$. We conclude $||\tilde{f}_{|B}-f||_{B,\infty}<\epsilon.$ By Lemma
\ref{lem:Tietze Extension}, we can extend $f$ continuously to $f:X\rightarrow[0,1]^{d}$
so that $||\tilde{f}-f||_{\infty}<\epsilon$.

Assume for a contradiction $I_{f}(x')=I_{f}(y')$ for some $(x',y')\in\overline{R}_{1}\times\overline{R}_{2}$.
In particular we have $F_{1}(x')=(f(x'),\ldots,f(T^{n_{1}-1}x'))=(f(y'),\ldots,f(T^{n_{1}-1}y'))=(F_{2}(y'))^{\oplus n_{2}}$
which is a contradiction to property (\ref{enu:F(x)neqF oplus n (y)-1}).
\end{proof}

\begin{prop}
\label{Prop:periodic} Let $n\in \mathbb{N}$.
Given a periodic points $x\in X$ whose minimal period is $n$ and $y\in X$  such that $y=T^{l}x$ for some $1\leq l\leq n-1$,
there exist an open neighborhood $R$ of $x$ such that
for
$K_{(x,y)}=\overline{R}\times T^l\overline{R}$, the set of $K_{(x,y)}$-compatible functions
is dense in $C(X,[0,1]^{d})$ and $K_{(x,y)}\cap\triangle=\emptyset$.
\end{prop}
\begin{proof}

As $x,Tx,\ldots,T^{n-1}x$ are pairwise distinct, one may select $\epsilon>0$ such that $\overline{B}_{\epsilon}(x),T\overline{B}_{\epsilon}(x),\ldots,T^{n-1}\overline{B}_{\epsilon}(x)$ are pairwise disjoint. Define $R=B_{\epsilon}(x)\cap H_{n}$. As in Proposition \ref{Prop: U_x times U_y is dense} we may assume $\epsilon$ is small enough such that
$\overline{R}$ (closure in $X$) is contained in $H_{n}$.
  As $\frac{\ensuremath{\dim(\overline{R})}}{n}<\frac{d}{2}$,
one can choose an open cover $\alpha$ of $\overline{R}$ with
$ord(\alpha)<\frac{nd}{2}$ so that $\max_{U\in\alpha,0\leq k\leq n-1}\diam(\tilde{f}(T^{k}U))<\frac{\epsilon}{2}$. For each $U\in\alpha$ choose $q_{U}\in U$, so that $\{q_{U}\}_{U\in \alpha}$  is a collection of distinct points in $\overline{R}$ and define $\tilde{v}_{U}=(\tilde{f}(T^{j}q_{U}))_{j=0}^{n-1}$.
By Lemma \ref{lem:F periodic translation case }
one can find a continuous function $F:\overline{R}\rightarrow([0,1]^{d})^{n}$,
with the following properties:
\begin{enumerate}
\item $\forall U\in\alpha,\ ||F(q_{U})-\tilde{v}_{U}||_{\infty}<\frac{\epsilon}{2}$,
\item \label{enu:convex combination-1-1}$\forall x\in X$, $F(x)\in \co\{F(q_{U})|\, x\in U\in\alpha\}$,
\item \label{enu:F(x)neqF(y)-1-1}If $x',y'\in\overline{R}$ then $F(x')\neq(F(y'))^{\bullet l}$
(recall that for $v=(v_{i})_{i=0}^{n-1}\in([0,1]^{d})^{n}$ and $1\leq l\leq n-1$,
$v^{\bullet l}|_{k}=v_{(k+l)\mod n}$, $0\leq k\leq n-1$).
\end{enumerate}
\noindent
Using that
$\overline{R},T\overline{R},\ldots,T^{n-1}\overline{R}$ are pairwise disjoint, define for $z\in\overline{R}$ and $0\leq k\leq n-1$:
\begin{equation}\label{eq:f is F}
f_{|T^{k}\overline{R}}(T^{k}z)\triangleq F(z)|_{k}
\end{equation}
\noindent
Denote $\overline{R}_{2}=T^{l}\overline{R}$ and notice that for $\tilde{z}\in\overline{R}_{2}$ and $0\leq k\leq n-1$
it holds:
\begin{equation}\label{eq:f is F+l}
f_{|T^{_{k}}\overline{R}_{2}}(T^{k}\tilde{z})=(F(T^{-l}\tilde{z}))^{\bullet l}|_{k}
\end{equation}
Indeed $f_{|T^{_{k}}\overline{R}_{2}}(T^{k}\tilde{z})=f_{|T^{_{k+l}}\overline{R}}(T^{k+l}(T^{-l}\tilde{z}))=(F(T^{-l}\tilde{z}))^{\bullet l}|_{k}$
where we have used the fact that for any $k\in\mathbb{N}$, as $R\subset H_{n}$,
$f_{|T^{_{k}}\overline{R}}(T^{k}z))=F(z)|_{k\mod n}$. As in Proposition \ref{Prop: U_x times U_y is dense} one can extend $f$ continuously to $f:X\rightarrow[0,1]^{d}$
with $||\tilde{f}-f||_{\infty}<\epsilon$.

Assume for a contradiction $I_{f}(x')=I_{f}(y')$ for some $(x',y')\in\overline{R}\times\overline{R}_{2}$.
In particular we have by (\ref{eq:f is F}) and (\ref{eq:f is F+l}), $F(x')=(f(x'),\ldots,f(T^{n-1}x'))=(f(y'),\ldots,f(T^{n-1}y'))=(F(T^{-l}y'))^{\bullet l}$
which is a contradiction to property (\ref{enu:F(x)neqF oplus n (y)-1}).
\end{proof}

\section{The Marker Property}\label{sec:The-Marker-Property}
\begin{defn}
A subset $F$ of a topological dynamical system $(X,T)$ is called
an \textbf{$n$-marker} ($n\in\mathbb{N}$) if:
\begin{enumerate}
\item $F\cap T^{i}(F)=\emptyset$ for $i=1,2,\ldots,n-1$.
\item The sets $\{T^{i}(F)\}_{i=1}^{m}$ cover $X$ for some $m\in\mathbb{N}$.
\end{enumerate}
The system $(X,T)$ is said to have the \textbf{marker property} if
there exist \textit{open} $n$-markers for all $n\in\mathbb{N}$.
\end{defn}
\begin{rem}
\label{Rem: marker property stable under extension}Clearly the marker
property is stable under extension, i.e. if $(X,T)$ has the marker
property and $(Y,S)\rightarrow(X,T)$ is an extension, then $(Y,S)$
has the marker property.
\end{rem}

\begin{rem}
By Lemma \ref{lem:closed marker iff open} we can (and do sometimes throughout the article) consider closed
$n$-markers instead of open $n$-markers without loss of generality.
\end{rem}

The marker property was first defined in \cite[Definition 2]{Dow06}, where one requires the $n$-markers to be clopen. In the same
article it was proven that an extension of an aperiodic
zero-dimensional (non necessarily invertible) t.d.s has the marker
property. This was essentially based on the "Krieger Marker Lemma"
(\cite[Lemma 2]{K82}). From \cite[Lemma 3.3]{L99} it follows that
an extension of an aperiodic minimal system has the marker property.
In \cite{Gut12b} the marker property and the related \textit{local}
marker property are investigated. In the same article, some classes of
t.d.s having the marker property are exhibited, for example, extensions
of aperiodic t.d.s with a countable number of minimal subsystems. We
mention an open problem:
\begin{problem}
\label{Ques: Aperiodic-->Marker?}Does every aperiodic system have the
marker property?
\end{problem}

\section{Aperiodic finite dimensional systems have the marker property\label{sec:Bonatti-Crovisier}}

 We present a partial generalization of the Bonatti-Crovisier tower
theorem (\cite[Theorem 3.1]{BC04}), proven for
$C^{1}$-diffeomorphism on manifolds,
to the setting of homeomorphisms of metric compacta:
\begin{thm}
\label{thm:finite dimensional has local marker property}Let $(X,T)$
be an aperiodic finite dimensional t.d.s, then $(X,T)$ has the marker
property.
\end{thm}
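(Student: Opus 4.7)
The plan is to fix $n\in\mathbb{N}$ and construct an open $n$-marker $F\subset X$ by an inductive tower-construction that generalizes the Bonatti--Crovisier strategy to the purely topological finite-dimensional setting. By Lemma \ref{lem:closed marker iff open} the open formulation is equivalent to a closed one, which gives useful flexibility when manipulating boundaries during the construction.

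The setup is standard. Aperiodicity implies that for every $x\in X$ the points $x,Tx,\ldots,T^{n-1}x$ are pairwise distinct, so continuity of $T$ yields an open neighborhood $U_x\ni x$ with $U_x,TU_x,\ldots,T^{n-1}U_x$ pairwise disjoint. By compactness I would extract a finite subcover $\{U_1,\ldots,U_N\}$ of $X$, each member $n$-free in the above sense.

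I would then build the marker incrementally. Put $F_1=U_1$; assuming $F_{i-1}$ open and $n$-free (that is, $F_{i-1}\cap T^jF_{i-1}=\emptyset$ for $j=1,\ldots,n-1$), the natural candidate piece to append is $V_i=U_i\setminus\bigcup_{|j|\leq n-1}\overline{T^jF_{i-1}}$, and the update $F_i=F_{i-1}\cup V_i$ preserves openness and $n$-freeness. The real content is guaranteeing that the final open set $F\triangleq F_N$ satisfies the covering axiom $\bigcup_{j=1}^M T^jF=X$ for some uniform $M\in\mathbb{N}$, rather than accumulating loss through the subtracted closures. This is where $\dim X<\infty$ enters: by exploiting finite dimension one can refine the cover $\{U_i\}$ and arrange the sets $F_i$ so that their boundaries are topologically small (for instance, nowhere dense with covering dimension strictly below $\dim X$), so that each closed set $\overline{T^jF_{i-1}}$ removed at step $i$ is thin inside $U_i$ and the remaining $V_i$ still carries enough open mass to participate meaningfully in the final cover by iterates. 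The generalized Bonatti--Crovisier tower theorem advertised in the section title is precisely a clean statement packaging this boundary-control mechanism.

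The main obstacle is orchestrating both marker axioms simultaneously. Zero-dimensional systems sidestep this via clopenness (Krieger's marker lemma), and the original $C^1$ Bonatti--Crovisier argument leverages transversality to keep boundaries under control; neither tool is available here. In the topological finite-dimensional setting one must instead exploit Lebesgue covering dimension directly, through careful refinement of $\{U_i\}$ and an iterative mechanism ensuring that the accumulated boundary losses after $N$ inductive steps still leave a set whose $T$-iterates cover $X$ in uniformly bounded time. Setting up this dimension-theoretic bookkeeping so that openness, $n$-freeness, and the covering condition remain compatible throughout the tower-construction is the delicate technical heart of the proof.
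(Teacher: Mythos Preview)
Your proposal identifies the right high-level architecture (inductive tower construction over a finite cover) but misses the actual mechanism by which finite dimension is exploited, and the specific construction you propose does not work.

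The concrete gap is twofold. First, you start from an $n$-free cover, whereas the paper begins with an $m$-free cover for $m=(2d+2)n-1$, where $d=\dim X$; this extra room is essential and is precisely where finite dimension enters quantitatively. Second, and more seriously, your inductive step is a subtraction $V_i=U_i\setminus\bigcup_{|j|\leq n-1}\overline{T^jF_{i-1}}$, and you hope that ``thin boundaries'' will prevent too much loss. This is not the right move: a single boundary $\partial F_{i-1}$ has codimension~$1$, which is not thin enough to control anything, and there is no dimension argument that salvages the subtraction scheme. The paper's inductive step (Lemma~\ref{lem:disjontifying_two_sets}) does something entirely different: given the current $U$ and the next $V$, it looks at the residual set $R=\overline V\setminus\bigcup_{i=1}^{m}T^iU$, perturbs $U$ so that $\partial U,T\partial U,\ldots,T^m\partial U$ are in \emph{general position} (Lemma~\ref{lem:general position of translations}), and deduces that every small ball in $R$ meets at most $d$ of the iterates $T^i\overline U$. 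A pigeonhole argument over the $2d+1$ ``blocks'' of length~$n$ inside $\{1,\ldots,m\}$ then assigns to each small piece $E\subset R$ an integer $c(E)\in\{1,\ldots,2d+1\}$, and one sets $W=U\cup\bigcup_E T^{-c(E)n}E$. The point is that pieces of $R$ are \emph{translated} by carefully chosen negative iterates before being adjoined, rather than subtracted; this simultaneously preserves $n$-freeness and guarantees $\overline V\subset\bigcup_{i=1}^{m}T^i\overline W$.

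None of the three key ideas---the enlarged separation constant $m=m(d,n)$, the general-position perturbation of $\partial U$, and the pigeonhole-driven translation of residual pieces---appears in your outline, and the ``boundary-control mechanism'' you allude to is qualitatively different from what is actually needed.
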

In order to prove the preceding theorem, we generalize Lemma 3.7 on p. 60 of \cite{BC04}. We do not assume $(X,T)$ is aperiodic but only that $P$ is closed. This is exploited in \cite{Gut12b}. For the purposes of Theorem \ref{thm:finite dimensional has local marker property} the reader is of course free to assume $P=\emptyset$.

\begin{lem}
\label{lem:disjontifying_two_sets}Let $X$ be a compact metric space
with $\dim(X)=d$ with a closed set of periodic points $P$. Fix $N\in\mathbb{N}$. Let $U$ and $V$ be two open sets
in $X$ with $\overline{U},\overline{V}\subset X\setminus P$. Then there exists $m=m(d,N)=(2d+2)N-1\in\mathbb{N}$ such that
if\end{lem}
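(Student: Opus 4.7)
The plan is to adapt the proof of Bonatti--Crovisier's Lemma~3.7 from \cite{BC04} to our topological setting, substituting the $C^1$-transversality arguments available on manifolds with dimension-theoretic general position on an arbitrary $d$-dimensional compact metric space. Given the way the lemma is invoked to build markers in Theorem~\ref{thm:finite dimensional has local marker property}, the conclusion should produce smaller open sets $U'\subset U$ and $V'\subset V$ whose iterates up to time $m$ enjoy strengthened disjointness relations, while their $m$-orbit still covers $\bigcup_{i=0}^{m-1} T^{i}(U\cup V)$.

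First I would examine the family of closed sets $\mathcal{F}\triangleq\{T^{i}\overline{U}\cap\overline{V}:|i|\leq m\}\cup\{T^{i}\overline{U}\cap\overline{U}:1\leq|i|\leq m\}$, which records where the forbidden overlaps occur. Because $P$ is closed and $\overline{U},\overline{V}\subset X\setminus P$, the equality $T^{i}x=x$ never holds on $\overline{U}\cup\overline{V}$, so after a harmless preliminary shrinking each orbit meets $\overline{U}\cup\overline{V}$ only finitely many times inside any $m$-window, yielding a uniform bound on the local multiplicity of $\mathcal F$ on $\overline U\cup\overline V$.

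The step that takes the place of transversality is the following dimension-theoretic input: since $\dim(X)=d$, every finite open cover of a closed subset of $X$ admits a refinement of order $\leq d$, so at most $d+1$ members of any such refinement meet at any point. Applying this to a cover of $\overline{U}\cup\overline{V}$ built from the collision data of $\mathcal F$, and combining with a subordinate partition of unity, I would carve out $U'\subset U$ and $V'\subset V$ in a combinatorially controlled way, removing exactly the ``colliding cells''. The arithmetic $m=(2d+2)N-1$ then comes out of a counting argument: two sets and $N$ levels of iteration create $2N$ potentially competing colours, each requiring $d+1$ translates in general position to be disentangled, which forces the orbit-window to have length at least $(2d+2)N-1$, as in \cite[Lemma~3.7]{BC04}.

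The main obstacle I anticipate is precisely this general-position step. In Bonatti--Crovisier one is free to perturb the diffeomorphism and use codimension bounds from smooth transversality; here $T$ is fixed and all the flexibility must come from shrinking $U$ and $V$. The delicate point is making the shrinkings compatible across the $m$ iterates, so that the removed pieces do not collectively destroy coverage of $\bigcup_{i} T^{i}(U\cup V)$. This is where the hypothesis that $P$ is closed (rather than merely the aperiodicity needed later in Theorem~\ref{thm:finite dimensional has local marker property}) enters in an essential way: it provides a uniform positive distance from $\overline{U}\cup\overline{V}$ to the periodic set, which is what permits the local refinement to be performed inside a neighbourhood still avoiding $P$ and therefore compatibly with the $m$-fold iteration.
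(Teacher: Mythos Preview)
Your guess at the conclusion of the lemma is off, and this sends the whole plan in the wrong direction. The lemma does \emph{not} shrink $U$ and $V$; it builds a \emph{larger} open set $W$ with $\overline{U}\subset\overline{W}\subset X\setminus P$ such that $\overline{V}\subset\bigcup_{i=1}^{m}T^{i}\overline{W}$ and $\overline{W}\cap T^{i}\overline{W}=\emptyset$ for $i=1,\ldots,N-1$. The point is to absorb $V$ into the orbit of a single set $W$ that still has $N-1$ disjoint iterates; iterating this over a finite cover is exactly how Theorem~\ref{thm:finite dimensional has local marker property} produces an $N$-marker. Shrinking $U$ and $V$ cannot achieve this: you need to \emph{add} material to $U$, namely suitably chosen backward iterates of pieces of $\overline{V}$.

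The actual mechanism is as follows. Set $R=\overline{V}\setminus\bigcup_{i=1}^{m}T^{i}U$. The general-position input is not a cover-order bound but Lemma~\ref{lem:general position of translations} (after Kulesza): one may enlarge $U$ slightly so that the \emph{boundaries} $\partial U,T\partial U,\ldots,T^{m}\partial U$ are in general position, hence any $d+1$ of them have empty intersection. Since every $x\in R$ lies outside each $T^{i}U$, any $T^{i}\overline{U}$ meeting a small ball about $x$ must do so through $T^{i}\partial U$; general position then forces each small ball $E$ in a cover $\overline{\mathcal{U}}$ of $R$ to meet at most $d$ of the sets $T^{i}\overline{U}$, $1\le i\le m$. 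Now comes the pigeonhole step that yields the constant $(2d+2)N-1$: the $2d+1$ index intervals $I_{k}=\{kN-(N-1),\ldots,kN+(N-1)\}$, $k=1,\ldots,2d+1$, sit inside $\{1,\ldots,m\}$, and since each of the $\le d$ forbidden indices lies in at most two of them, some $I_{c(E)}$ is clean; one then sets $W=U\cup\bigcup_{E\in\overline{\mathcal{U}}}T^{-c(E)N}E$ and checks the five possible collision cases directly. Your proposal contains none of this construction, and the ``removing colliding cells'' idea has no evident route to the required covering $\overline{V}\subset\bigcup T^{i}\overline{W}$.
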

\begin{enumerate}
\item $\overline{U}\cap T^{i}\overline{U}=\emptyset$ for $i=1,2,\ldots,N-1$.
\item $\overline{V}\cap T^{i}\overline{V}=\emptyset$ for $i=1,2,\ldots,m$.
\end{enumerate}
then there exists an open $W$ so that $\overline{U}\subset\overline{W}$, $\overline{W}\subset X\setminus P$,
$\overline{V}\subset\bigcup_{i=1}^{m}T^{i}(\overline{W})$ and $\overline{W}\cap T^{i}\overline{W}=\emptyset,\, i=1,2,\ldots,N-1$.
\begin{proof}[Proof of Theorem \ref{thm:finite dimensional has local marker property}
using Lemma \ref{lem:disjontifying_two_sets}] Cover $X$ by $U_{1},U_{2},\ldots,U_{s}$
so that $\overline{U}_{j}\cap T^{i}\overline{U}_{j}=\emptyset,\, i=1,2,\ldots m(d,N)$,
$j=1,2,\ldots,s$ . Apply Lemma \ref{lem:disjontifying_two_sets}, to get an open set $W_2$ so that $\overline{W}_2\cap T^{i}\overline{W}_2=\emptyset,\, i=1,2,\ldots,N-1$,
 $\overline{U}_1\subset\overline{W}_2$ and $\overline{U}_2\subset\bigcup_{i=1}^{m}T^{i}(\overline{W}_2)$. Proceed by induction. Assume there is an open set $W_k$ so that $\overline{W}_k\cap T^{i}\overline{W}_k=\emptyset,\, i=1,2,\ldots,N-1$ and $\bigcup_{i=1}^{k}\overline{U}_{i}\subset\bigcup_{i=0}^{m}T^{i}(\overline{W}_k)$. By Lemma  \ref{lem:disjontifying_two_sets}, one can find an open set $W_{k+1}$ so that $\overline{W}_{k+1}\cap T^{i}\overline{W}_{k+1}=\emptyset,\, i=1,2,\ldots,N-1$, $\overline{W}_k\subset \overline{W}_{k+1}$ and $\overline{U}_{k+1}\subset\bigcup_{i=1}^{m}T^{i}(\overline{W}_{k+1})$. The last two inclusions imply $\bigcup_{i=1}^{k+1}\overline{U}_{i}\subset\bigcup_{i=0}^{m}T^{i}(\overline{W}_{k+1})$. We thus conclude that for $W\triangleq W_s$, one has  $\overline{W}\cap T^{i}\overline{W}=\emptyset,\, i=1,2,\ldots,N-1$ and $X=\bigcup_{i=1}^{s}\overline{U}_{i}\subset\bigcup_{i=0}^{m}T^{i}(\overline{W})$.
\end{proof}
In the proof of the preceding lemma, we are going to use the following
definition from p. 944 of \cite{Ku95}:
\begin{defn}
Let $X$ be a compact space with $\dim(X)=d$ and $l\in\mathbb{N}$.
We say $S_{1},S_{2},\ldots,S_{l}\subset X$ are in \textbf{general
position} if for all $\mathcal{I}\subset{\{1,2,\ldots,l\}}$ with
$|\mathcal{I}|=m$, one has $\dim(\bigcap_{i\in\mathcal{\mathcal{I}}}S_{i})\leq\max\{-1,d-m\}$
(where $\dim(Y)=-1$ iff $Y=\emptyset$).

The following lemma is a direct consequence of Lemma 3.5 of \cite{L95}:
\begin{lem}
\label{lem:general position of translations}Let $X$ be a compact metric space
with $\dim(X)=d$ and with a closed set of periodic points $P$. Let $U$
be an open set so that $\overline{U}\subset X\setminus P$. For every $k\in\mathbb{N}$ and $V$ open set with
\textup{$\partial U\subset V$} there exists an open set $U'$ with
$U\subset U'\subset U\cup V$ and $\partial U'\subset\partial U\cup V$
such that $\{\partial U',T\partial U',\ldots T^{k-1}\partial U'\}$
are in general position.
\end{lem}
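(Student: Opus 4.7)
The lemma is a direct consequence of Lindenstrauss's perturbation lemma for boundaries in finite-dimensional compacta (Lemma 3.5 of \cite{L95}), which asserts that for a compact metric space $X$ of dimension $d$, an open set $U\subset X$, an open neighborhood $V\supset\partial U$, and any finite collection of closed sets $A_{1},\ldots,A_{\ell}\subset X$, there exists an open $U'$ with $U\subset U'\subset U\cup V$, $\partial U'\subset\partial U\cup V$, and $\dim(\partial U'\cap A_{j})\leq\dim(A_{j})-1$ for every $j$. My plan is to feed into this lemma, in successive rounds, a carefully chosen list of closed sets built from the iterates $T^{i}\partial U^{(m-1)}$, and to reduce the dimension of each multi-intersection $\bigcap_{i\in\mathcal{I}}T^{i}\partial U$ by one per step of an induction on $|\mathcal{I}|$.

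The preparatory step handles the equivariance coupling: any perturbation of $\partial U$ simultaneously perturbs all its translates $T^{i}\partial U$, so one cannot naively separate the two. Here I would exploit the hypothesis $\overline{U}\subset X\setminus P$ together with compactness and the closedness of $P$: each point of $\overline{U}$ is aperiodic, hence has pairwise distinct iterates $x,Tx,\ldots,T^{k-1}x$; by continuity and a finite subcover argument, $\partial U$ can be covered by finitely many open sets $B_{1},\ldots,B_{r}\subset V$ with $B_{s}\cap T^{i}B_{s}=\emptyset$ for $i=1,\ldots,k-1$ and all $s$. A perturbation supported inside a single $B_{s}$ then leaves every translate $T^{i}\partial U$ unchanged on $B_{s}$ itself.

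The main induction proceeds on $m=|\mathcal{I}|$. The case $m=1$ is trivial. Assuming after $m-1$ rounds that $\partial U^{(m-1)}\subset\partial U\cup V$ already satisfies $\dim(\bigcap_{i\in\mathcal{I}}T^{i}\partial U^{(m-1)})\leq d-|\mathcal{I}|$ for all $|\mathcal{I}|\leq m-1$, one applies Lindenstrauss's lemma, piece-by-piece through the $B_{s}$, with the finite collection
\[
\bigl\{A_{\mathcal{J}}=\bigcap_{i\in\mathcal{J}}T^{i}\partial U^{(m-1)}\,:\,\mathcal{J}\subset\{1,\ldots,k-1\},\,|\mathcal{J}|=m-1\bigr\},
\]
each of dimension at most $d-m+1$ by the inductive hypothesis. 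This produces $\partial U^{(m)}$ with $\dim(\partial U^{(m)}\cap A_{\mathcal{J}})\leq d-m$. The local orbit-disjointness of the $B_{s}$ converts this into the global bound $\dim(\bigcap_{i\in\mathcal{I}}T^{i}\partial U^{(m)})\leq d-m$ for each $\mathcal{I}$ of cardinality $m$ containing $0$; subsets $\mathcal{I}\not\ni0$ are handled by first translating via the dimension-preserving homeomorphism $T^{\min\mathcal{I}}$. After $k$ rounds one arrives at $U'=U^{(k)}$ with the required general-position property.

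The main obstacle I anticipate is precisely the equivariance-perturbation tension just described: controlling $\partial U'\cap T^{i}\partial U'$ cannot be achieved by a naive application of the perturbation lemma against the fixed translates $T^{i}\partial U$, since the translates move when $\partial U$ does. The orbit-disjoint decomposition $\{B_{s}\}$ supplied by the preparatory step is what decouples local perturbations from their images under $T^{i}$ ($1\leq i\leq k-1$) and makes the iterative scheme coherent; this is the one place where the hypothesis $\overline{U}\subset X\setminus P$ is essential.
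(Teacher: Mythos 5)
The paper offers no proof of this lemma: it asserts the lemma is ``a direct consequence of Lemma~3.5 of \cite{L95}'', and that lemma of Lindenstrauss already contains the dynamical content (perturbing $\partial U$ so that the iterated boundaries are in general position, under an aperiodicity assumption on $\overline{U}$). The ``direct consequence'' is merely that Lindenstrauss's argument uses aperiodicity only locally on $\overline{U}$, which the hypothesis $\overline{U}\subset X\setminus P$ with $P$ closed supplies. So you are not reproducing the paper's argument; you are attempting to re-derive the cited lemma from a purely static perturbation statement.

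Within your re-derivation there is a genuine gap in the decoupling claim. The orbit-disjointness $B_s\cap T^iB_s=\emptyset$ ($1\leq i\leq k-1$) only guarantees that a perturbation supported in $B_s$ leaves each $T^i\partial U$ unchanged \emph{inside $B_s$}. It does \emph{not} prevent a perturbation supported in $B_t$ (for $t\neq s$) from altering $T^i\partial U$ inside $B_s$: the image $T^iB_t$ can perfectly well meet $B_s$. Consequently, as you iterate through $B_1,B_2,\ldots,B_r$, the target sets $A_{\mathcal{J}}=\bigcap_{i\in\mathcal{J}}T^i\partial U$ relevant to a given $B_s$ keep moving, and the dimension bound you established there can be destroyed by the later steps. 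A second, independent problem is that the $B_s$ overlap (they must, since they cover $\partial U$), so the $B_{s+1}$-perturbation also changes $\partial U$ directly on $B_s\cap B_{s+1}$. Neither issue is addressed by the hypothesis $\overline{U}\subset X\setminus P$, and the passage from the local bounds $\dim(\partial U^{(m)}\cap A_{\mathcal{J}})\leq d-m$ (against the \emph{old} translates) to the global bound $\dim\bigl(\bigcap_{i\in\mathcal{I}}T^i\partial U^{(m)}\bigr)\leq d-m$ (against the \emph{new} translates) does not follow as stated. To make the iteration coherent one would need either a single global ``random'' perturbation (partition of unity plus a parameter chosen so that all the finitely many dimension constraints hold simultaneously, in the style of Kulesza \cite{Ku95}) or a much more careful ordering and shrinking of the $B_s$; neither appears in your sketch, and this is precisely the content already provided by the cited Lemma~3.5 of \cite{L95}.
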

\end{defn}
\begin{rem}
In the sequel we only need the weaker property that any subcollection
of $\{\partial U',T\partial U',\ldots T^{k-1}\partial U'\}$ of cardinality
$\geq d+1$ has empty intersection. \end{rem}
\begin{proof}[Proof of Lemma \ref{lem:disjontifying_two_sets}]

The idea of the proof is the following: If $\overline{U}$ and $\overline{V}$ have both $m$ disjoint iterates, it does not necessarily hold
for their union $\overline{U}~\cup~\overline{V}$, which is the naive
candidate for $\overline{W}$. Instead of this union we decompose
$\overline{V}$ into pieces and construct a closed set $\overline{W}$
containing $\overline{U}$ and (different) iterates of these pieces.
Specifically denote $R\triangleq\overline{V}\setminus\bigcup_{i=1}^{m}T^{i}U$.
We will define a finite collection of closed sets  $\overline{\mathcal{U}}$ which
covers $R$ and has some additional properties that will be essential
for the definition of $W$. Let $\rho>0$ be such that $\overline{B}_{\rho}(R)\cap T^{i}\overline{B}_{\rho}(R)=\emptyset,\ i=1,2,\ldots,m$ and $\overline{B}_{\rho}(R)\subset X\setminus P$.
By Lemma \ref{lem:general position of translations}, by enlarging
$U$ one can assume w.l.o.g that $\mathcal{B}\triangleq\{\partial U,T^{1}\partial U,\ldots,T^{m}\partial U\}$
is in general position and still $\overline{U}\cap T^{i}\overline{U}=\emptyset$,
$i=1,2,\ldots,N-1$ and $\overline{U}\subset X\setminus P$.

We claim that there is $0<\delta<\rho$ so that for every $x\in R$,
$|\{i\in\{1,\ldots,m\}|\,\, T^{i}\overline{U}\cap\overline{B}_{\delta}(x)\neq\emptyset\}|\leq d$.
Indeed assume not. Then one can find sequences $x_{k}\in R$, $\delta_{k}>0$,
$k\in\mathbb{N}$ so that $\delta_{k}\rightarrow_{k\rightarrow \infty}0$ and $x_{k}\rightarrow_{k\rightarrow \infty} x$
for some $x\in R$ (as $R$ is closed) with $|\{i\in\{1,\ldots,m\}|\, T^{i}\overline{U}\cap\overline{B}_{\delta_{k}}(x_{k})\neq\emptyset\}|>d$.
By refining the sequence one can find distinct $i_{1},\ldots i_{d+1}\in \{1,\ldots,m\}$
and $y_{k}^{l}\in X$, $k\in I\subset\mathbb{N}$, $|I|=\infty$,
$l\in\{1,2,\ldots,d+1\}$ so that $d(y_{k}^{l},x_{k})\leq\delta_{k}$
and $y_{k}^{l}\in T^{i_{l}}\overline{U}$. As for every $l$, $y_{k}^{l}\rightarrow_{k\rightarrow \infty} x$,
we conclude $x\in R\cap\bigcap_{l=1}^{d+1}T^{i_{l}}\overline{U}\subset\bigcap_{l=1}^{d+1}T^{i_{l}}\partial\overline{U}$.
This contradicts $\mathcal{B}$ being in general position. Choose
an open finite cover of $R$ of the form $\mathcal{\mathcal{U}}=\{B_{\delta}(z_{i})\}_{i=1}^{s}$
($z_{i}\in R)$ and define $\overline{\mathcal{U}}=\{\overline{M}|\, M\in\mathcal{U}\}$.
We record the crucial property of the closed cover $\overline{\mathcal{U}}$:

\begin{equation}
\forall E\in\overline{\mathcal{U}}\:|\{i\in\{1,\ldots,m\}|\, T^{i}\overline{U}\cap E\neq\emptyset\}|\leq d\label{eq:d_intersections_at_most}
\end{equation}
Let $O=\bigcup\overline{\mathcal{U}}$. Clearly $R\subset O$. Note
$O\subset\overline{B}_{\rho}(R)$, which implies:

\begin{equation}
O\cap T^{i}O=\emptyset,\ i=1,2,\ldots,m\label{eq:O_Separation-1}
\end{equation}
We now associate to each element in $\overline{\mathcal{U}}$ an integer
in ${\{1,2,\dots,2d+1\}}$, i.e. we define a mapping $c:\overline{\mathcal{U}}\rightarrow{\{1,2,\dots,2d+1\}}$.
Fix $E\in\overline{\mathcal{U}}$. We claim there is at least one
element $c(E)$ in ${\{1,2,\dots,2d+1\}}$, so that for $t\in{\{-(N-1),\ldots,(N-1)\}}$,
$T^{t}E\cap T^{c(E)N}\overline{U}=\emptyset$. Indeed start out by
marking the indices $1\leq i_{1}<i_{2}<\cdots<i_{s}\leq m$, so that
$E\cap T^{i_{k}}\overline{U}\neq\emptyset$. By Equation (\ref{eq:d_intersections_at_most}),
$s\leq d$. Consider the index intervals $I_{k}=\{-N+1+kN,\ldots,kN+N-1\}$,
$k=1,2,\ldots,2d+1$. As $m=(2d+1)N+N-1$, $I_{k}\subset\{1,\ldots,m\}$.
Clearly each $i_{l}$ can be contained in at most two index intervals.
By a simple pigeonhole argument there must be an interval $I_{c(E)}$
which does not contain any of the $i_{l}$, $l=1,\ldots,s$. This is
equivalent to the desired statement.

Define the open set $W=U\cup\bigcup_{H\in\mathcal{\mathcal{U}}}T^{-c(\overline{H})N}H$
and note:

\[
\overline{W}=\overline{U}\cup\bigcup_{E\in\overline{\mathcal{U}}}T^{-c(E)N}E
\]
Clearly $\overline{U}\subset\overline{W}$, $\overline{V}\subset\bigcup_{i=1}^{m}T^{i}(\overline{W})$ and $\overline{W}\subset X\setminus P$. Indeed let $x\in \overline{V}$. If $x\in \bigcup_{i=1}^{m}T^{i}U$, then certainly $x\in  \bigcup_{i=1}^{m}T^{i}(\overline{W})$. Otherwise $x\in R=\overline{V}\setminus \bigcup_{i=1}^{m}T^{i}U$. Thus there exist $E\in \overline{\mathcal{U}}$, such that $x\in E\subset T^{c(E)N}\overline{W}$. Clearly $1\leq c(E)N< m$
We now claim $\overline{W}\cap T^{i}\overline{W}=\emptyset,\, i=1,2,\ldots,N-1$.
Assume not, then there exists $x,y\in W$ and $t\in\{1,2,\ldots,N-1\}$
so that $x=T^{t}y$. We consider several cases:
\begin{enumerate}
\item $x,y\in\overline{U}$. Contradicts the fact $\overline{U}\cap T^{i}\overline{U}=\emptyset$,
$i=1,2,\ldots,N-1$.
\item $x,y\in T^{-c(E)N}E\subset T^{-c(E)N}O$ for some $E\in\overline{\mathcal{U}}$. Contradicts Equation (\ref{eq:O_Separation-1}).
\item $x\in T^{-c(E_{1})N}E_{1}$ and $y\in T^{-c(E_{2})N}E_{2}$ for $E_{1}\neq E_{2}$.
Conclude $T^{t+c(E_{1})N}y\in E_{1}\subset O$ and $T^{c(E_{2})N}y\in E_{2}\subset O$.
However as $0<|t+c(E_{1})N~-~c(E_{2})N|\leq(2d+1)N<m$ this contradicts Equation (\ref{eq:O_Separation-1}).
\item $x\in T^{-c(E)N}E$ and $y\in\overline{U}$. Conclude $E\cap T^{t+c(E)N}\overline{U}\neq\emptyset$.
This contradicts the definition of $c(E)$.
\item $x\in\overline{U}$ and $y\in T^{-c(E)N}E$. Conclude $E\cap T^{-t+c(E)N}\overline{U}\neq\emptyset$.
This contradicts the definition of $c(E)$.
\end{enumerate}
\end{proof}

\section{The Strong Topological Rokhlin Property}\label{sec:The Strong Topological Rokhlin Property}

In \cite[Subsection 1.9]{G11}, the topological Rokhlin property was
introduced. Here is a stronger variant, originating in \cite{L99}, which is further discussed
in \cite{Gut12b}:
\begin{defn}
We say that $(X,T)$ has the \textbf{(global) strong topological Rokhlin
property} if for every $n\in\mathbb{N}$ there exists a continuous
function $f:X\rightarrow\mathbb{R}$ so that if we define the \textit{exceptional
set} $E_{f}=\{x\in X\,|\, f(Tx)\neq f(x)+1\}$, then $E_{f}\cap T^{i}(E_{f})=\emptyset$
for $i=1,\ldots n-1$.\end{defn}
\begin{rem}
\label{rem:at most one bad index (global case)}Let $a,b\in\mathbb{Z}$
with $b-a\leq n-1$. Under the above assumptions consider $x\in X$.
Then there exists at most one index $a\leq l\leq b$ so that $f(T^{l+1}x)\neq f(T^{l}x)+1$.
Indeed if $T^{l}x,T^{l'}x\in E_{f}$ for $a\leq l<l'\leq b$, then
$E_{f}\cap T^{l'-l}E_{f}\neq\emptyset$ contradicting the definition.
\end{rem}
The following theorem is proven in \cite{Gut12b} but we repeat it for the convenience of the reader. The proof of the direction $\Leftarrow$ is based on \cite[Lemma 3.3]{L99}:

\begin{thm}\label{thm:-Marker Property implies Strong Rokhlin}$(X,T)$
has the strong topological Rokhlin property iff $(X,T)$ has the marker
property.
\end{thm}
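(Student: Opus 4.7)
The proof splits into two implications; the forward direction is a short compactness argument, and the reverse follows a construction in the spirit of \cite[Lemma~3.3]{L99}. For ``strong Rokhlin $\Rightarrow$ marker'', I propose that $E_f$ itself is an open $n$-marker. Openness is automatic since $E_f$ is the preimage of the open set $\mathbb{R}\setminus\{1\}$ under the continuous map $x\mapsto f(Tx)-f(x)$; the disjointness $E_f\cap T^{i}E_f=\emptyset$ for $i=1,\ldots,n-1$ is the hypothesis. For the covering condition I argue by compactness: if some orbit $\{T^{k}x\}_{k\in\mathbb{Z}}$ entirely avoided $E_f$, then $f(T^{k+1}x)=f(T^{k}x)+1$ would hold for every $k$, forcing $f$ unbounded along that orbit and contradicting boundedness of $f$ on the compact space $X$. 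Hence $X=\bigcup_{k\in\mathbb{Z}}T^{k}E_f$; compactness plus openness of $E_f$ yield a finite subcover $X=\bigcup_{k=-N}^{N}T^{k}E_f$, and applying $T^{N+1}$ repositions the indices into $\{1,\ldots,2N+1\}$, as required by the definition of an $n$-marker.

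For the reverse implication, fix $n$ and invoke the marker property to choose an open marker $F$ with separation $n'$ chosen much larger than $n$, together with a covering $X=\bigcup_{i=1}^{m}T^{i}F$. The guiding target is the integer-valued backward return time $\tau^{-}(x)=\min\{k\ge 0:T^{-k}x\in F\}$, which satisfies $\tau^{-}(Tx)=\tau^{-}(x)+1$ for every $x\notin T^{-1}F$; its exceptional set $T^{-1}F$ already inherits the $n$-disjointness from $F$ via $T^{-1}F\cap T^{i}T^{-1}F=T^{-1}(F\cap T^{i}F)=\emptyset$. The defect of $\tau^{-}$ is that it is only upper semicontinuous. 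The plan is to smooth it into a genuinely continuous $f$ with qualitatively the same behaviour: shrink $F$ to a closed sub-marker $\overline{F_0}\subset F$ whose iterates still cover $X$, use Urysohn's lemma to build continuous bumps $\psi_i$ with $\mathrm{supp}(\psi_i)\subset T^{i}F$ and $\psi_i\equiv 1$ on $T^{i}\overline{F_0}$, impose the equivariance $\psi_i=\psi_1\circ T^{-(i-1)}$ (so that $\psi_i(Tx)=\psi_{i-1}(x)$ for $i=2,\ldots,m$), and set $f(x)=\sum_{i}i\,\psi_i(x)/\sum_{j}\psi_j(x)$; the denominator is positive and continuous thanks to the covering by the $T^{i}\overline{F_0}$.

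The main obstacle I expect to encounter is confining the resulting $E_f$ to a set with the required $n$-fold disjointness. A direct telescoping calculation using the equivariance shows that $f(Tx)-f(x)=1$ whenever both $\psi_m(x)=0$ and $\psi_1(Tx)=0$, hence $E_f\subset T^{m}F\cup F$. For this union to satisfy $(T^{m}F\cup F)\cap T^{i}(T^{m}F\cup F)=\emptyset$ for $i=1,\ldots,n-1$ one has to control ``cross'' intersections such as $T^{m}F\cap T^{i}F$ and $F\cap T^{m+i}F$, which requires a careful hierarchical choice of markers (or an additional damping cut-off near the top of the tower) --- this is the delicate bookkeeping step. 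Once it is arranged, the resulting $f$ witnesses the strong Rokhlin property at level $n$, completing the equivalence.
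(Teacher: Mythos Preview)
The paper does not actually prove this theorem: it only says the result is established in \cite{Gut12b} and ``follows easily from the proof of Lemma~3.3 of \cite{L99}''. So there is no in-paper argument to compare against; what matters is whether your sketch can be completed.

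Your forward implication is fine. For the reverse implication you correctly isolate the difficulty, but the gap you flag is genuine and your proposed construction does not close it. With $f(x)=\sum_{i=1}^{m} i\,\psi_i(x)\big/\sum_j \psi_j(x)$ and $\psi_i=\psi_1\circ T^{-(i-1)}$, one indeed gets $E_f\subset F\cup T^{m}F$, but the cross terms $F\cap T^{m+i}F$ and $T^{m}F\cap T^{i}F$ are uncontrolled: the marker hypothesis only separates $F$ from $T^{j}F$ for $1\le j\le n'-1$, while $m$ (the covering index) is typically much larger than $n'$ and cannot be bounded in advance. Neither ``hierarchical markers'' nor an unspecified ``damping cut-off'' resolves this without further argument; the hard cutoff at $i=m$ is precisely what manufactures the spurious exceptional set at the top of the tower.

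The clean fix is to remove the cutoff altogether. Take an open $n$-marker $U$, a closed $F\subset U$ with $X=\bigcup_{i=0}^{m}T^{i}F$, and $\phi\in C(X,[0,1])$ with $\phi|_{F}\equiv 1$ and $\mathrm{supp}(\phi)\subset U$. Set
\[
f(x)=\sum_{k\ge 0}\ \prod_{j=0}^{k-1}\bigl(1-\phi(T^{-j}x)\bigr).
\]
The covering forces every product with $k>m$ to vanish, so this is a finite sum of continuous functions and $1\le f\le m+1$. A one-line telescoping gives $f(Tx)=1+(1-\phi(Tx))f(x)$, hence $f(Tx)-f(x)-1=-\phi(Tx)f(x)$, and since $f\ge 1$ one has $E_f=\{x:\phi(Tx)\neq 0\}\subset T^{-1}U$. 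Then $E_f\cap T^{i}E_f\subset T^{-1}(U\cap T^{i}U)=\emptyset$ for $i=1,\ldots,n-1$, with no cross terms to worry about. This is the mechanism behind the Lindenstrauss construction the paper is pointing to.
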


\begin{proof}
Assume $(X,T)$ has the marker
property and fix $m\in \mathbb{N}$. One may choose a closed set $R$ and an open set $F$ with $R\subset F$ so that
\begin{equation}\label{eq:R_covers}
X=\bigcup_{i=0}^{q}T^{i}R
\end{equation}
\begin{equation}\label{eq:F_separates}
F\cap T^{i}(F)=\emptyset,\,i=1,2,\ldots,m-1
\end{equation}

Let $\omega:X\rightarrow[0,1]$ be a continuous function so that $\omega_{|R}\equiv1$
and $\omega_{|F{}^{c}}\equiv0$. We define a random walk for $z\in X$.
At any point $p$ we arrive during the random walk, the walk terminates
with probability $\omega(p)$ and moves to $T^{-1}p$ with probability
$1-\omega(p)$. Notice that for every point $z\in X$
the random walk will terminate in at most $q$ steps. Indeed
by Equation (\ref{eq:R_covers}) there is an $i\in\{0,\ldots,q\}$ so that
$T^{-i}z\in R$ at which $1-\omega(T^{-i}z)=0$.  Conclude there is a finite number
of possible walks starting at $z$ and we denote by $f(z)$ the expected
length of the walk starting at $z$. As there is a uniform bound on
the number of walks, $f:X\rightarrow\mathbb{R}_{+}$
is continuous. Note that if $y\notin F$, then $f(y)=f(T^{-1}y)+1$. Thus if $f(Tx)\neq f(x)+1$ then $Tx\in F$. Conclude $E_{f}=\{x\in X\,|\, f(Tx)\neq f(x)+1\}\subset T^{-1}F$. By Equation (\ref{eq:F_separates}) $T^{i}(E_{f})\cap E_{f}=\emptyset$ for $i=1,\ldots,m-1$.

Now assume $(X,T)$ has the strong topological
Rokhlin property. Fix $n\in\mathbb{N}$ and let $f:X\rightarrow\mathbb{R}$
be a continuous function such that for the open set $E_{f}=\{x\in X\,|\, f(Tx)\neq f(x)+1\}$,
$T^{i}(E_{f})$, $i=0,1,\ldots,n-1$, are pairwise disjoint. We claim
that the iterates $E_{f}$,$T^{-1}E_{f},\ldots$ eventually cover
$X$ and therefore $E_{f}$ is an open $n$-marker. Indeed as $f$ is bounded from above for any $x\in X$, the
series $f(T^{i}x)$, $i=1,2,\ldots$ cannot increase indefinitely.
\end{proof}

\section{Finite dimensional Jaworski-type theorem\label{sec:Finite-dimensional-Jaworski-type}}
\begin{thm}
Let $d\in\mathbb{N}$. If $X$ is finite dimensional and $\perdim(X,T)<\frac{d}{2}$,
then $(X,T)$ can be equivariantly embedded in $(([0,1]^{d})^{\mathbb{Z}},\shift)$.\end{thm}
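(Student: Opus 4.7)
The plan is to apply the Baire-category framework of Subsection~\ref{sub:The-Baire-Category} inside $(C(X,[0,1]^d),\|\cdot\|_\infty)$. I would exhibit a countable closed cover $\{\overline{K}_j\}_{j\in\mathbb{N}}$ of $(X\times X)\setminus\Delta$ for which each $D_{\overline{K}_j}$ is open and dense; the intersection is then comeagre, and any $f$ in it makes $I_f$ injective and hence an equivariant embedding by compactness of $X$. Openness of each $D_{\overline{K}_j}$ is provided by Lemma~\ref{lem:D_K is open}, so the whole content lies in producing the cover and establishing density.

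The cover splits naturally into two classes. Pairs $(x,y)\in(P\times P)\setminus\Delta$ are handled verbatim as in the proof of Theorem~\ref{thm:periodic case of embedding conjecture}: the countable closed cover of $(P\times P)\setminus\Delta$ constructed there, together with Proposition~\ref{Prop: U_x times U_y is dense}, delivers density of the associated $D_{\overline{R}_1\times\overline{R}_2}$, and this is the step where the hypothesis $perdim(X,T)<d/2$ is consumed. For the remaining pairs, $(x_0,y_0)$ with at least one coordinate (say $x_0$) in $X\setminus P$, I would fix $N\in\mathbb{N}$ with $Nd>2\,dim(X)$ and, using the aperiodicity of $x_0$, choose closed neighborhoods $\overline{U}\ni x_0$, $\overline{V}\ni y_0$ with $\overline{U}\cap\overline{V}=\emptyset$ and $\overline{U},T\overline{U},\dots,T^{N-1}\overline{U}$ pairwise disjoint. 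Second countability and Lindel\"of-ness of $X\times X$ ensure that countably many such neighborhood pairs $\overline{U}\times\overline{V}$ cover the second class.

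For density on such $\overline{U}\times\overline{V}$, I would perform a Jaworski-type general-position perturbation on the $N$-tower $\overline{U}\cup T\overline{U}\cup\cdots\cup T^{N-1}\overline{U}$, mirroring Case~1 of Proposition~\ref{Prop: U_x times U_y is dense} with the tower height $N$ replacing the common period. Since $dim(X)<Nd/2$, a variant of Lemma~\ref{lem:F periodic disjoint case} lets me prescribe, within an $\epsilon$-ball around $\tilde f$ on the tower, tower-vector values distinguishing $x_0$ from $y_0$; Tietze extension via Lemma~\ref{lem:Tietze Extension} then spreads this to a global $\epsilon$-perturbation of $\tilde f$. The auxiliary sub-cases---$y_0$ off the first $N$ iterates of $x_0$, $y_0$ in the orbit closure but off the orbit, and $y_0$ sitting on the orbit itself---are all handled uniformly by the same general-position count. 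If one wishes to coordinate these local towers into a single globally well-behaved structure, the marker property of Theorem~\ref{thm:finite dimensional has local marker property} (equivalently the strong topological Rokhlin property, Theorem~\ref{thm:-Marker Property implies Strong Rokhlin}) applied on the aperiodic part of $X$ can be invoked, though it is not strictly needed for density of each individual $D_{\overline{K}_j}$.

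The main technical obstacle I anticipate is the interface between the periodic and aperiodic regimes when $P$ fails to be closed: a perturbation designed to separate an aperiodic pair could in principle destroy the simultaneous separation of a nearby periodic pair already arranged in class (a). This is precisely what the Baire-category packaging defuses, since each of the countably many conditions is open and dense and they compose independently, so a single $f$ in their intersection realises them all at once and yields the desired embedding $I_f:(X,T)\hookrightarrow(([0,1]^d)^{\mathbb{Z}},shift)$.
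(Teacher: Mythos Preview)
Your overall strategy---Baire category, splitting $(X\times X)\setminus\Delta$ into the periodic-periodic part (handled by Theorem~\ref{thm:periodic case of embedding conjecture}) and the remainder---matches the paper's. But your treatment of the second class has two genuine gaps that the paper's proof addresses with specific machinery you have not supplied.

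First, the mixed case where $x_0$ is aperiodic and $y_0$ is periodic of small period $m$. You fix a single $N$ with $Nd>2\dim(X)$ and assert that ``$\dim(X)<Nd/2$'' lets a variant of Lemma~\ref{lem:F periodic disjoint case} separate the pair. But the orbit vector of any $y'\in\overline V\subset H_m$ lies in the $md$-dimensional subspace $V_N^m$ of $m$-periodic vectors, and to force $F(x')\notin V_N^m$ by a general-position count one needs roughly $\mathrm{ord}(\gamma)+1\le (N-m)d$ (compare Lemma~\ref{lem:F not in V_n}), which fails for large $m$ under your choice of $N$. The paper therefore isolates this case as $D_2$, chooses the tower height $2S$ \emph{depending on} $m$ via $n+1\le(S-\tfrac{m}{2})d$, and uses the dedicated subspace argument of Proposition~\ref{prop:D U times V is dense (mixed periodic case)}. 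Alternatively one could build a second tower of height $m$ over $\overline V$ and invoke $\dim(P_m)<md/2$ from the periodic-dimension hypothesis---but then $perdim(X,T)<d/2$ is \emph{not} consumed solely in class~(a), contrary to your claim.

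Second, the same-orbit case $y_0=T^\ell x_0$ with $x_0$ aperiodic. Here the $N$-tower over $\overline U$ and the $N$-tower over $\overline V\approx T^\ell\overline U$ share coordinates, so the vectors appearing in the general-position count are not independent and Lemma~\ref{lem:F periodic disjoint case} does not apply as stated. The paper circumvents this by choosing \emph{sparse} indices $i_0=0,i_1,\dots,i_{2n}$ (e.g.\ $i_k=k(\ell+1)$, or via Lemma~\ref{lem:one third good indices} when $x_0$ has large finite period) so that all $4n+2$ points $T^{i_k}x_0,T^{i_k}y_0$ are pairwise distinct; only then can one build two genuinely disjoint towers and run the affine-independence argument (Proposition~\ref{prop:D_U times V is dense (non periodic case)}). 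Your assertion that this sub-case is ``handled uniformly by the same general-position count'' does not survive contact with the overlap.

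Finally, the marker property and strong Rokhlin property play no role in the paper's proof of this theorem; they enter only in Section~\ref{sec:Infinite-dimensional-Jaworski-type}.
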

\begin{proof}
Let $\dim(X)=n\in\mathbb{N}$. We will use the following representation
$(X\times X)\setminus(\triangle\cup(P\times P))\subset D_{1}\cup D_{2}$
where $D_{1}=(X\times X)\setminus(\triangle\cup(P_{6n}\times X)\cup(X\times P_{6n}))$
and $D_{2}=(P_{6n}\times(X\setminus P))\cup((X\setminus P)\times P_{6n})$. That is $D_{1}$ is the collection of pairs $(x,y)\in X\times X$,  where $x,y$ are distinct elements whose orbits have size greater than $6\dim(X)$; $D_{2}$ is the collection of pairs $(x,y)\in X\times X$, where $x$ has orbit of size less or equal than $6\dim(X)$ and $y$ is aperiodic or vice versa. By Propositions \ref{prop:D_U times V is dense (non periodic case)} and \ref{prop:D U times V is dense (mixed periodic case)}, for $(x,y)\in D_{1}\cup D_{2}$ one may find open neighborhoods $U_1$ and $U_2$ of $x$ and $y$ respectively so that for $K_{(x,y)}=\overline{U}_{1}\times\overline{U}_{2}$,
$D_{K_{(x,y)}}$ is dense in $C(X,[0,1]^{d})$ and $K_{(x,y)}\cap\triangle=\emptyset$. By Lemma \ref{lem:D_K is open} for
any $(x,y)\in D_{1}\cup D_{2}$, $D_{K_{(x,y)}}$
is open in $C(X,[0,1]^{d})$. As $X\times X$ is second-countable,
every subspace is a Lindelöf space, i.e.
every open cover has a countable subcover. We can therefore choose an open
countable cover of $D_{1}\cup D_{2}=(X\times X)\setminus(\triangle\cup(P\times P))$
by $\mathcal{U}=\{\mathring{K}_{(x_{1},y_{1})},\mathring{K}_{(x_{2},y_{2})},\ldots\}$. Thus there is a closed countable cover of $D_{1}\cup D_{2}$
by closed sets $\overline{\mathcal{U}}\triangleq\{K_{(x_{1},y_{1})},K_{(x_{2},y_{2})},\ldots\}$
so that $D_{K_{(x_{i},y_{i})}}$ is open and dense in $C(X,[0,1]^{n})$
for $i\in\mathbb{N}$. By Theorem \ref{thm:periodic case of embedding conjecture} there
is a comeagre set $\mathcal{D}\subset C(X,[0,1]^{d})$ so that if
$f\in\mathcal{D}$, then $I_{f}$ is $\big ((P\times P)\setminus\triangle\big )$-compatible.
Define the comeagre set $\mathcal{A}=\mathcal{D}\cap\bigcap_{K\in\mathcal{\overline{\mathcal{U}}}}D_{K}$.
Clearly any $f\in\mathcal{A}$ realizes an embedding $(X,T)\hookrightarrow(([0,1]^{d})^{\mathbb{Z}},\shift)$
(see Subsection \ref{sub:The-Baire-Category} for more explanations).
\end{proof}

\begin{prop}
\label{prop:D_U times V is dense (non periodic case)}
Given $x,y\in X$ with orbits of size greater than $6dimX$, there exist open neighborhoods $U_1$ and $U_2$ of $x$ and $y$ respectively such that for $K_{(x,y)}=\overline{U}_{1}\times\overline{U}_{2}$, the set of $K_{(x,y)}$-compatible functions
is dense in $C(X,[0,1]^{d})$ and $K_{(x,y)}\cap\triangle=\emptyset$.
\end{prop}
\begin{proof}
First we claim we can find $i_{0}=0,i_{1},\ldots,i_{2n}$
so that $T^{i_{0}}x,\ldots,T^{i_{2n}}x,T^{i_{0}}y,\ldots,T^{i_{2n}}y$
are pairwise distinct. Indeed if $y\notin \orb(x)$ this is trivial.
Likewise if $x\notin P$ and $y=T^{l}x$ for some $l\in\mathbb{N}$,
define $i_{k}=k(l+1)$, $k=0,1,\ldots,2n$. Finally if $y\in \orb(x)$
and $x\in P$, then by assumption $x\in P_{N}\setminus P_{N-1}$ for
$N>6n$. To find $i_{0}=0,i_{1},\ldots,i_{2n}$ invoke Lemma \ref{lem:one third good indices}.
We can thus find open neighborhoods $U_1$ and $U_2$ of $x$ and $y$ respectively so that $\{T^{i_{k}}\overline{U}_{x},T^{i_{k}}\overline{V}_{y}\}_{k=0}^{2n}$
are pairwise disjoint. Denote $K_{(x,y)}=\overline{U}_{1}\times\overline{U}_{2}$.
Let $\epsilon>0$. Let $\tilde{f}:X\rightarrow[0,1]^{d}$ be a continuous
function. We will show that there exists a continuous function $f:X\rightarrow[0,1]^{d}$
so that $\|f-\tilde{f}\|_{\infty}<\epsilon$ and $I_{f}$ is $K_{(x,y)}$-compatible.
Let $\alpha_{1}$ and $\alpha_{2}$ be open covers of $\overline{U}_{1}$
and $\overline{U}_{2}$ respectively with $\max_{W\in\alpha_{j},k\in\{0,1,\ldots,2n\}}\diam(\tilde{f}(T^{i_{k}}W))<\frac{\epsilon}{2}$ and $ord(\alpha_{j})\leq n$ open neighborhoods $U_1$ and $U_2$ of $x$ and $y$ respectively
for $j=1,2$. For each $W\in\alpha_{j}$ choose $q_{W}\in W$ so that $\{q_{W}\}_{W\in \alpha_j}$  is a collection of distinct points in $X$.
Define $\tilde{v}_{W}=(\tilde{f}(T^{i_{k}}q_{W}))_{k=0}^{2n}$. By
Lemma \ref{lem:F periodic disjoint case} (with $n_{1}=n_{2}=2n+1$,
note $F_{2}^{\oplus n_1}=F_{2}$) one can find for $j=1,2$ continuous
functions $F_{j}:\overline{U}_{j}\rightarrow([0,1]^{d})^{2n+1}$, with the following
properties:
\begin{enumerate}
\item \label{enu:F is nearly v}$\forall W\in\alpha_{j}$, $||F(q_{W})-\tilde{v}_{W}||_{\infty}<\frac{\epsilon}{2}$,
\item \label{enu:convex approximation}$\forall x\in \overline{U}_{j}$, $F_{j}(x)\in \co\{F_{j}(q_{W})|\, x\in W\in\alpha_{j}\}$,
\item \label{enu:F(x)neqF(y)}If $x'\in\overline{U}_{1}$ and $y'\in\overline{U}_{2}$
then $F_{1}(x')\neq F_{2}(y')$.
\end{enumerate}

Let $A=\bigcup_{k=0}^{2n}T^{i_{k}}(\overline{U}_{1}\cup\overline{U}_{2})$.
Define $f':A\rightarrow[0,1]^{d}$:
\[
f'_{|T^{i_{k}}\overline{U}_{j}}(T^{i_{k}}z)=F_{j}(z)|_{k},\,k=0,1,\ldots,2n
\]
Fix $z\in\overline{U}_{j}$ and $k\in\{0,1,\ldots,2n\}$. As by property
(\ref{enu:convex approximation}), $f'(T^{i_{k}}z)=F_{j}(z)|_{k}\in \co\{F_{j}(q_{W})|_{k}|\, z\in W\in\alpha_{j}\}$,
$||f'(T^{i_{k}}z)-\tilde{f}(T^{i_{k}}z)||_{\infty}\leq\max_{z\in W\in\alpha_{j}}||F_{j}(q_{W})|_{k}-\tilde{f}(T^{i_{k}}z)||_{\infty}$.
Fix $W\in\alpha_{j}$ with $z\in W$. Note $||F_{j}(q_{W})|_{k}-\tilde{f}(T^{i_{k}}z)||_{\infty}\leq||F_{j}(q_{W})|_{k}-\tilde{v}_{W}|_{k}||_{\infty}+||\tilde{v}_{W}|_{k}-\tilde{f}(T^{i_{k}}z)||_{\infty}$.
The first term on the right-hand side is bounded by $\frac{\epsilon}{2}$
by property (\ref{enu:F is nearly v}). As $\diam(\tilde{f}(T^{i_{k}}W))<\frac{\epsilon}{2}$
and $\tilde{v}_{W}|_{k}=\tilde{f}(T^{i_{k}}q_{W})$ we have $||\tilde{f}(T^{i_{k}}q_{W})-\tilde{f}(T^{i_{k}}z)||_{\infty}<\frac{\epsilon}{2}$.
We finally conclude $||f'-\tilde{f||}_{A,\infty}<\epsilon$. By Lemma
\ref{lem:Tietze Extension} there is $f:X\rightarrow[0,1]^{d}$ so
that $f{}_{|A}=f'_{|A}$ and $||f-\tilde{f||}_{\infty}<\epsilon$.

Assume for a contradiction $I_{f}(x')=I_{f}(y')$ for some $(x',y')\in\overline{U}_{1}\times\overline{U}_{2}$.
In particular we have $(f(T^{i_{0}}x'),\ldots,f(T^{i_{2n}}x'))=F(x')=(f(T^{i_{0}}y'),\ldots,f(T^{i_{2n}}y'))=F(y')$
which is a contradiction to property (\ref{enu:F(x)neqF(y)}).
\end{proof}

\begin{prop}
\label{prop:D U times V is dense (mixed periodic case)}
Given $x,y\in X$ where $x$ has orbit of size less or equal than $6\dim(X)$ and $y$ is aperiodic or vice versa, there exist open neighborhoods $U_1$ and $U_2$ of $x$ and $y$ respectively such that for $K_{(x,y)}=\overline{U}_{1}\times\overline{U}_{2}$, the set of $K_{(x,y)}$-compatible functions
is dense in $C(X,[0,1]^{d})$ and $K_{(x,y)}\cap\triangle=\emptyset$.
\end{prop}

\begin{proof}

Assume w.l.o.g that $(x,y)\in(X\setminus P)\times H_{m}$ with $1\leq m\leq6n$.
Choose $S\in\mathbb{N}$ so that $Sd\geq \ord(\gamma)+1+md$. Let $U\subset X\setminus P_{6n}$
be an open set with $x\in U$ and so that $\overline{U}\cap T^{l}\overline{U}=\emptyset$,
$l=1,\ldots S-1$. As in Proposition \ref{Prop: U_x times U_y is dense} we choose $W$ to be an open set in $H_{m}$ with $y\in W\subset\overline{W}\subset H_{m}$. Define $K_{(x,y)}=\overline{U}\times\overline{W}$.
 Let $\tilde{f}:X\rightarrow[0,1]^{d}$ be a continuous function. We
will show that there exists a continuous function $f:X\rightarrow[0,1]^{d}$
so that $\|f-\tilde{f}\|_{\infty}<\epsilon$ and $I_{f}$ is $K_{(x,y)}$-compatible.
Let $\gamma$ be an open cover of $\overline{U}$ so that $\max_{V\in\gamma,k\in\{1,\ldots,S\}}\diam(\tilde{f}(T^{k}V))<\frac{\epsilon}{2}$ and $ord(\gamma)\leq n$. For
each $V\in\gamma$ choose $q_{V}\in V$ so that $\{q_{V}\}_{V\in \gamma}$ is a collection of distinct points in $\overline{U}$ and define $\tilde{v}_{V}=(\tilde{f}(T^{k}q_{V}))_{k=1}^{S}$.
By Lemma \ref{lem:F not in V_n}, there
exists a continuous function $F:\overline{U}\rightarrow([0,1]^{d})^{S}$, with
the following properties:
\begin{enumerate}
\item $\forall V\in\gamma$, $||F(q_{V})-\tilde{v}_{V}||_{\infty}<\frac{\epsilon}{2}$\label{enu:approximately v_U-1},
\item $\forall x\in \overline{U}$, $F(x)\in \co\{F(q_{V})|\, x\in V\in\gamma\}$\label{enu:convex combination-2},
\item \label{enu:not in V_n-1-1}For any $w\in \overline{U}$ it holds $F(w)\notin V_{S}^{m}$, where $V_{S}^{m}\triangleq\{z=(z_{1},\ldots,z_{S})\in([0,1]^{d})^{S}|\quad\forall 1\leq a,b\leq S,\ (a=b\mod m)\rightarrow z_{a}=z_{b}\}$.
\end{enumerate}
\noindent
Let $A=\bigcup_{k=1}^{S}T^{k}\overline{U}$. Define $f':A\rightarrow[0,1]^{d}$:
\[
f'_{|T^{k}\overline{U}}(T^{k}z)=F(z)|_{k},\,k=1,\ldots,S
\]
\noindent
We conclude as in Proposition \ref{prop:D_U times V is dense (non periodic case)}
that there is $f:X\rightarrow[0,1]^{d}$ so that $f_{|A}=f'_{|A}$
and $||f-\tilde{f}||_{\infty}<\epsilon$.

Let $(x',y')\in\overline{U}\times\overline{W}$. Note $(f(Tx'),\ldots,f(T^{S}x'))=F(x')\notin V_{S}^{m}$.
As $y'\in \overline{W}\subset H_{m}$, we have $(f(Ty'),\ldots,f(T^{S}y'))\in V_{S}^{m}$. Conclude by property (\ref{enu:not in V_n-1-1}), $I_{f}(x')\neq I_{f}(y')$.

\end{proof}

\section{Infinite dimensional Jaworski-type theorem}\label{sec:Infinite-dimensional-Jaworski-type}
\begin{thm}
\label{thm:16+1 Embedding}Let $d\in\mathbb{N}$. Let $(Z,S)$ be
an aperiodic finite-dimensional t.d.s and $(X,T)$ a t.d.s with $\mdim(X,T)<\frac{d}{16}$.
Let $\pi:(X,T)\rightarrow(Z,S)$ be an extension. Then there is a
continuous mapping $f:X\rightarrow[0,1]^{d+1}$ such that $I_{f}:(X,T)\hookrightarrow(([0,1]^{d+1})^{\mathbb{Z}},\shift)$
is an embedding.\end{thm}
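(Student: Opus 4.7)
The plan is to apply the Baire category framework of Subsection \ref{sub:The-Baire-Category} to $C(X,[0,1]^{d+1})$ and show that the set of $f$ for which $I_f$ is an embedding is comeagre. The very first ingredient is the marker structure on $(X,T)$: since $(Z,S)$ is aperiodic and finite-dimensional, Theorem \ref{thm:finite dimensional has local marker property} yields the marker property for $(Z,S)$, which lifts to $(X,T)$ by Remark \ref{Rem: marker property stable under extension}. By Theorem \ref{thm:-Marker Property implies Strong Rokhlin}, $(X,T)$ has the strong topological Rokhlin property, so for every $n$ there is $h_n\in C(X,\R)$ whose exceptional set $E_{h_n}$ is $n$-separated under $T$. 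I would dedicate one of the $d+1$ target coordinates to (a normalized version of) $h_n$, which serves as a ``clock'' allowing us to read off, from a segment of $I_f$ of length $n$, the relative position of points within a marker tower except in a bounded window around $E_{h_n}$. The remaining $d$ coordinates will be produced by perturbation.

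Next I would cover $(X\times X)\setminus\Delta$ by countably many closed sets $K_{(x,y)}=\overline{U}_x\times\overline{U}_y$, where $U_x,U_y$ are chosen so that the orbit segments $T^kU_x, T^kU_y$ for $0\le k<n$ are either pairwise disjoint (when $y\notin orb(x)$) or related by a fixed iterate of $T$ (when $y=T^\ell x$), exactly in the spirit of the disjointness bookkeeping in the proof of Theorem \ref{thm:periodic case of embedding conjecture}. By Lemma \ref{lem:D_K is open} each $D_{K_{(x,y)}}$ is open, so the content is to show that each $D_{K_{(x,y)}}$ is dense, which together with Baire yields a comeagre intersection whose members $f$ induce embeddings $I_f$.

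The density argument is the technical heart and mimics Propositions \ref{Prop: U_x times U_y is dense} and \ref{prop:D U times V is dense (mixed periodic case)}, but with the open cover governed by mean dimension instead of periodic dimension. Given $\tilde f$ and $\epsilon>0$, use Remark \ref{Rem: mdim with linear shift} and the hypothesis $mdim(X,T)<d/16$ to select $n$ large and an open cover $\alpha$ of $X$ refining the $\epsilon$-diameter cover of $\tilde f$, together with a refinement $\beta\succ\alpha^{n}$ of order $<nd/16$. On the marker base $R\subset \overline{U}_x\cup\overline{U}_y$, use the auxiliary lemmas from the Appendix to build a continuous $F:R\to([0,1]^{d})^n$ that is $\epsilon$-close to $(\tilde f(T^k\cdot))_{k=0}^{n-1}$, takes values in convex hulls of vertex values of $\beta$, and places the image of $\overline U_x$ and $\overline U_y$ in sufficiently generic position that $F(x')\ne F(y')$ (or the appropriate cyclic variant). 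Define $f|_{T^kR}(T^kz)\triangleq F(z)|_k$ on the disjoint iterates of $R$, append the clock coordinate from $h_n$, and extend by Lemma \ref{lem:Tietze Extension} to all of $X$. The clock coordinate ensures that separation of $F$-vectors translates into separation of $I_f$-orbits across the tower.

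The dimension count produces the constant $16$: the target $([0,1]^{d})^n$ has real dimension $nd$, the ``bad'' locus $F(x')=F(y')$ has codimension $nd$, and since the cover $\beta\times\beta$ restricted to $\overline U_x\times\overline U_y$ has order $\le 2\,ord(\beta)+1<nd/8+1$, the generic position perturbation succeeds once $2\,ord(\beta)$ is bounded by roughly $nd/2$, i.e. once $ord(\beta)/n<d/16$. The main obstacle I anticipate is the boundary of the marker tower: the clock $h_n$ is only well-behaved off a window around $E_{h_n}$, and since $X$ is infinite-dimensional, one cannot rely on a global cover of $X\times X$ by small-order sets and must instead do all the combinatorial work on an $n$-segment of a single orbit. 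Handling this by making $n$ large with respect to the bounded window loss, and ensuring the ``+1'' clock coordinate is robust enough to distinguish tower heights despite perturbations, is what makes the argument genuinely more delicate than the finite-dimensional case of Section \ref{sec:Finite-dimensional-Jaworski-type}.
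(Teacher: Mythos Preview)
Your plan has a real gap in how the extra coordinate is used. You propose to run Baire in $C(X,[0,1]^{d+1})$ and, in each density step, set the $(d+1)$-th coordinate of the perturbed $f$ to a normalized Rokhlin clock $h_n$. But density requires the perturbed $f$ to be $\epsilon$-close to an \emph{arbitrary} $\tilde f$, so you cannot force $f_{d+1}$ to equal (or even be close to) $h_n$. Worse, the $n$ you need is dictated by the scale of the closed set $K$ (finer $K$ near the diagonal forces finer covers, hence larger $n$), so there is no single clock $h_n$ that could serve for all the countably many $D_K$'s whose intersection you want to take. The ``clock'' idea is right in spirit, but it cannot live inside the function being perturbed.

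The paper resolves this by decoupling the two roles. It first proves (Theorem~\ref{thm:I_f cross pi embedding}) that the set of $g\in C(X,[0,1]^d)$ for which $I_g\times\pi$ is an embedding is comeagre; here the factor map $\pi$ itself supplies the alignment, because $I_g\times\pi(x)=I_g\times\pi(y)$ forces $\pi(x)=\pi(y)$ and hence $n(\pi(T^a x))=n(\pi(T^a y))$ for the Rokhlin function $n:Z\to\R$, which is exactly the synchronization you wanted from a clock. The Rokhlin function enters only in the \emph{construction} of the perturbed $g$ (formula~(\ref{eq:f'-1})), not as a target coordinate, and it is free to change with $\epsilon$ since Baire is run over $D^\pi_\epsilon$'s. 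The extra ``$+1$'' coordinate is then obtained once and for all by applying Jaworski's theorem to the finite-dimensional aperiodic factor $(Z,S)$: one takes $h:Z\to[0,1]$ with $I_h$ an embedding and sets $f=(g,h\circ\pi)$. Note also that the paper uses the $\epsilon$-embedding sets $D^\pi_\epsilon$ rather than the $D_K$'s; this is natural because the output of the Lindenstrauss-type perturbation is precisely ``$I_g(x)=I_g(y)$ and $\pi(x)=\pi(y)$ imply $x,y$ lie in a common $\alpha$-cell'', which is an $\epsilon$-embedding statement, and it avoids having to localize the construction near specific $K_{(x,y)}$'s.
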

\begin{proof}
By Theorem \ref{thm:I_f cross pi embedding} below, there is an embedding
$I_{g}\times\pi:(X,T)\hookrightarrow(([0,1]^{d})^{\mathbb{Z}},\shift)\times(Z,S)$
for some continuous mapping $g:X\rightarrow[0,1]^{d}$. By Jaworski's theorem (see Subsection \ref{sub:Jaworski's-Theorems}) there is a
continuous mapping $h:Z\rightarrow[0,1]$ so that $I_{h}:(Z,S)\hookrightarrow(([0,1])^{\mathbb{Z}},\shift)$
is an embedding. Conclude that for $f=(g,h\circ\pi):X\rightarrow[0,1]^{d+1}$,
$I_{f}:(X,T)\hookrightarrow(([0,1]^{d+1})^{\mathbb{Z}},\shift)$ is
an embedding.\end{proof}
\begin{defn}
Let $X,Y$ be metric spaces and $\epsilon>0$. A continuous mapping
$f:X\rightarrow Y$ such that for every $y\in Y$, $\diam(f^{-1}(y))<\epsilon$
is called an \textbf{$\epsilon$-embedding}.

The following theorem is closely related to \cite[Theorem 5.1]{L99}
and  \cite[Theorem 1.5]{GutTsu12}.\end{defn}
\begin{thm}
\label{thm:I_f cross pi embedding}Let $d\in\mathbb{N}$. Let $(Z,S)$
be an aperiodic finite-dimensional t.d.s and $(X,T)$ a t.d.s with
$\mdim(X,T)<\frac{d}{16}$. Let $\pi:(X,T)\rightarrow(Z,S)$ be an
extension. Then the collection of continuous mappings $f:X\rightarrow[0,1]^{d}$
that $I_{f}\times\pi:(X,T)\hookrightarrow(([0,1]^{d})^{\mathbb{Z}},\shift)\times(Z,S)$
is an embedding is comeagre in $C(X,[0,1]^{d})$.\end{thm}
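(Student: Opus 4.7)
The plan is to apply the Baire category framework of Subsection \ref{sub:The-Baire-Category}. Since the factor map $\pi$ already distinguishes any pair $(x,y)$ with $\pi(x)\neq\pi(y)$, the map $I_{f}\times\pi$ is an embedding if and only if $I_{f}$ separates every point of the ``fibred diagonal'' $K\triangleq\{(x,y)\in X\times X:\ x\neq y,\ \pi(x)=\pi(y)\}$. As $X\times X$ is second countable, it will suffice to exhibit, for every $(x_{0},y_{0})\in K$, an open neighborhood $U$ with $\overline{U}\subset K$ such that $D_{\overline{U}}$ is dense in $C(X,[0,1]^{d})$. Openness of each such $D_{\overline{U}}$ is supplied by the standard Lemma \ref{lem:D_K is open}; the Lindel\"of property then yields a countable subcover whose intersection is the desired comeagre set. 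Observe also that the aperiodicity of $(Z,S)$ forces $(X,T)$ to be aperiodic, so no issues coming from periodic points arise and $K\cap(P\times P)=\emptyset$.

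Next I would import the tower machinery built in Sections \ref{sec:The-Marker-Property}--\ref{sec:The Strong Topological Rokhlin Property}. By Theorem \ref{thm:finite dimensional has local marker property}, the aperiodic finite-dimensional base $(Z,S)$ has the marker property; by Remark \ref{Rem: marker property stable under extension} its extension $(X,T)$ inherits the marker property, and Theorem \ref{thm:-Marker Property implies Strong Rokhlin} then furnishes the strong topological Rokhlin property for $(X,T)$. Concretely, for every $n$ I obtain a continuous $\phi:X\to\mathbb{R}$ whose exceptional set $E_{\phi}$ is $(n-1)$-wandering, which permits chopping $X$ into Rokhlin columns of height at least $n$ with at most one ``bad'' index per orbit-block of length $n$ (Remark \ref{rem:at most one bad index (global case)}).

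To prove density of $D_{\overline{U}}$ fix $\tilde{f}\in C(X,[0,1]^{d})$ and $\epsilon>0$. Choose a cover $\alpha\in\mathcal{C}$ whose normalized orders $\frac{D(\alpha^{n})}{n}$ converge to $mdim(X,T)<\frac{d}{16}$, and take $n$ large enough that $\max_{W\in\alpha,\,0\leq k<n}\mathrm{diam}(\tilde{f}(T^{k}W))<\epsilon/2$ and that $D(\alpha^{n+O(1)})<nd/16$ (using Remark \ref{Rem: mdim with linear shift} to absorb the single bad index per column). On each column I then replace $\tilde{f}$, level by level, by a continuous map $F:X\to([0,1]^{d})^{n}$ whose image lies in a simplicial complex of dimension well below $nd$, in the spirit of the auxiliary constructions deployed in Section \ref{sec:Finite-dimensional-Jaworski-type}. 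Since any pair in $\overline{U}$ lies in one or at most two such columns, a general-position argument perturbs $F$ so that the induced vectors in $([0,1]^{d})^{n}$ for $x$ and $y$ (possibly cyclically shifted to handle different heights) are forced to disagree, giving the required $\overline{U}$-compatibility.

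The principal obstacle is the dimensional bookkeeping that yields the constant $\frac{1}{16}$. The dimension of the relevant image simplicial complex must accommodate: two separate ``column maps'', each of dimension $\sim mdim\cdot n$; a further doubling coming from the Menger--N\"obeling-style general position required to force two images to miss in $([0,1]^{d})^{n}$; and a small additional allowance for the bad index produced by the strong Rokhlin property and for the extension of the modified $f$ off the towers via the Tietze-type Lemma used elsewhere in the paper. Keeping all of these contributions strictly below $nd$, while retaining $\|f-\tilde{f}\|_{\infty}<\epsilon$ and continuity across the exceptional set, is precisely where the factor $16$ enters; this is a refinement of the strategy of \cite{L99} with the finite-dimensional-base variant of \cite{Gut12b}.
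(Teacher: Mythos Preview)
Your outline has the right architecture but contains one genuine gap and one framing difference worth noting.

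\textbf{The gap.} You obtain the strong Rokhlin function $\phi:X\to\mathbb{R}$ on $X$ itself (via inheritance of the marker property through $\pi$). The paper instead takes the Rokhlin function $n:Z\to\mathbb{R}$ on the base and uses $n\circ\pi$. This is not a cosmetic choice: the entire argument hinges on the fact that whenever $\pi(x)=\pi(y)$, the two points $x$ and $y$ sit at \emph{exactly the same height} in the tower, with the same fractional part $\lambda$ and the same indices $\underline{n},\overline{n}$ at every time. This synchronization is what makes the hypothetical equality $I_f(x)=I_f(y)$ collapse to the single clean identity
\[
(1-\lambda)F(T^{r-a}x)|_{a}^{a+4S-1}+\lambda F(T^{r-a-1}x)|_{a+1}^{a+4S}
=(1-\lambda)F(T^{r-a}y)|_{a}^{a+4S-1}+\lambda F(T^{r-a-1}y)|_{a+1}^{a+4S},
\]
to which Lindenstrauss' Lemma~5.6 of \cite{L99} applies directly. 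With a Rokhlin function on $X$ that is not constant on $\pi$-fibres, $x$ and $y$ may occupy different levels and different convex weights, and you are forced to compare $F$-values over misaligned windows with distinct $\lambda$'s; the general-position lemma you allude to does not cover that situation, and your phrase ``possibly cyclically shifted to handle different heights'' sweeps this under the rug. The fix is simple: build $n$ on $Z$ and pull back, exactly as the paper does.

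\textbf{Framing difference.} The paper does not use the $D_{\overline{U}}$ compatibility sets at all here. It works with the global sets $D_{\epsilon}^{\pi}=\{f:\ I_f\times\pi\ \text{is an }\epsilon\text{-embedding}\}$, proves each is open (Lemma~\ref{lem:D_epsilon is open}) and dense, and intersects over $\epsilon=1/n$. Your $D_{\overline{U}}$ route is not wrong, but it is redundant: the perturbation you build depends only on $\alpha$ and $N$, not on the particular $\overline{U}$, so once the synchronization issue is repaired your $f$ already lies in every $D_{\overline{U}}$ simultaneously, i.e.\ in $D_{\epsilon}^{\pi}$. The paper's framing makes this explicit and avoids the Lindel\"of bookkeeping.

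Finally, your accounting for the constant $16$ is heuristic; in the paper it arises precisely from $S=M/8=N/16$ together with the requirement $ord(\gamma)<Sd$ feeding into the $4S$-window hypothesis of Lemma~5.6 of \cite{L99}, not from a Menger--N\"obeling doubling.
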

\begin{proof}
Fix $\epsilon>0$. Denote:

\begin{equation}
D_{\epsilon}^{\pi}=\{f\in C(X,[0,1]^{d})|\quad I_{f}\times\pi\mathrm{\mathrm{\textrm{ is an }}\epsilon\textrm{-embedding}\}}\label{eq:D_epsilon^pi}
\end{equation}
By Lemma \ref{lem:D_epsilon is open} below $D_{\epsilon}^{\pi}$
is open in $C(X,[0,1]^{d})$. We now prove that $D_{\epsilon}^{\pi}$
is dense in $C(X,[0,1]^{d})$. As every $f\in\bigcap_{n=1}^{\infty}D_{\frac{1}{n}}^{\pi}$
realizes an embedding $I_{f}\times\pi:(X,T)\hookrightarrow(([0,1]^{d})^{\mathbb{Z}},\shift)\times(Z,S)$,
this implies the statement of the theorem through the Baire category theorem (see Subsection \ref{sub:The-Baire-Category}). Let $\tilde{f}:X\rightarrow[0,1]^{d}$
be a continuous function and $\delta>0$. We will show that there exists a continuous
function $f:X\rightarrow[0,1]^{d}$ so that $\|f-\tilde{f}\|_{\infty}<\delta$
and $I_{f}\times\pi$ is an $\epsilon$-embedding. We define:

\[
m_{\dim}=\begin{cases}
\frac{1}{32} & \quad \mdim(X,T)=0\\
\mdim(X,T) & \quad\mathrm{otherwise}
\end{cases}
\]
Notice it holds $m_{\dim}<\frac{d}{16}$. Let $\alpha$ be a cover
of $X$ with $\max_{U\in\alpha}\diam(\tilde{f}(U))<\frac{\delta}{2}$
and $\max_{U\in\alpha}\diam(U)<\epsilon$. Let $\epsilon'>0$ be such
that $16m_{\dim}(1+2\epsilon')<d$. Let $N\in\mathbb{N}$ be such that
it holds $\frac{1}{N}D(\alpha^{N+1})<(1+\epsilon')m_{\dim}$ (here
we use $m_{\dim}>0$ and Remark \ref{Rem: mdim with linear shift})
and $N$ is divisible by $16$. Let $\gamma\succ\alpha^{N+1}$ be
an open cover so that $D(\alpha^{N+1})=\ord(\gamma)$. We have thus
$\ord(\gamma)<N(1+\epsilon')m_{\dim}.$ Let $M=\frac{N}{2}$ and $S=\frac{M}{8}$.
Observe $M,S\in\mathbb{N}$. Notice $Sd>\frac{N}{16}16m_{\dim}(1+2\epsilon')=Nm_{\dim}(1+2\epsilon')>Nm_{\dim}(1+\epsilon')$.
Conclude:

\begin{equation}
\ord(\gamma)<Sd\label{eq:ord<Sd}
\end{equation}
Choose distinct $q_{U}\in U$, $U\in\gamma$ and define $\tilde{v}_{U}=(\tilde{f}(T^{i}q_{U}))_{i=0}^{N-1}$.
By Equation (\ref{eq:ord<Sd}), according to Lemma \ref{lem:Lin99 5.6} (\cite[Lemma 5.6]{L99}), one
can find a continuous function $F:X\rightarrow([0,1]^{d})^{N}$, with
the following properties:
\begin{enumerate}
\item $\forall U\in\gamma$, $||F(q_{U})-\tilde{v}_{U}||_{\infty}<\frac{\delta}{2}$,
\item $\forall x\in X$, $F(x)\in \co\{F(q_{U})|\, x\in U\in\gamma\}$,
\item \label{enu:linear-independence-1}If for some $0\leq l<N-4S$ and
$\lambda\in(0,1]$ and $x,y,x',y'\in X$ so that:
\end{enumerate}

\[
\lambda F(x)|_{l}^{l+4S-1}+(1-\lambda)F(y)|_{l+1}^{l+4S}=\lambda F(x')|_{l}^{l+4S-1}+(1-\lambda)F(y')|_{l+1}^{l+4S}
\]
then there exist $U\in\gamma$ so that $x,x'\in U$.

By Theorem \ref{thm:finite dimensional has local marker property}
$(Z,S)$ has the marker property. By Theorem \ref{thm:-Marker Property implies Strong Rokhlin}
$(Z,S)$ has the strong topological Rokhlin property. We therefore
have a continuous function $n:Z\rightarrow\mathbb{R}$ so that \textit{for}
$E_{n}=\{y\in Z\,|\, n(Sy)\neq n(y)+1\}$, it holds that $E_n\cap S^{i}(E_{n})$ for
$i=1,2,\ldots,2M-2$ are pairwise disjoint. Let $\underline{n}(x),\overline{n}(x)n'(x):X\rightarrow\mathbb{R}$ be given by
$\underline{n}(x)=\lfloor n(\pi(x))\rfloor\,\mod M$, $\overline{n}(x)=\lceil n(\pi(x))\rceil\,\mod M$,
$n'(x)=\{n(\pi(x))\}$. Define for $x\in X$:

\begin{equation}
f(x)=(1-n'(x)F(T^{-\underline{n}(x)}x)|_{\underline{n}(x)}+n'(x)F(T^{-\overline{n}(x)}x)|_{\overline{n}(x)}\label{eq:f'-1}
\end{equation}
 We claim $f$ is continuous. If $n(\pi(x))\notin \mathbb{Z}$, then $f$ is continuous at $x$ as $\underline{n}(x),\overline{n}(x)n'(x)$ are continuous at $x$.  For $n(\pi(x))\in \mathbb{Z}$ fix $\eta>0$ and a neighborhood $U$ of $x$ such that for all $x'\in U$, $|n(x)-n(x')|<\eta$ and $||F(T^{-n(x)}x)-F(T^{n(x)}x')||_{\infty}<\eta$.
Assume w.l.o.g $n(x)\leq n(x')<n(x)+\eta$ and compute, using $f(x)=F(T^{-n(x)}x)|_{n(x)}$, $$||f(x)-f(x')||_{\infty}\leq||n'(x')F(T^{-\overline{n}(x')}x')|_{\overline{n}(x)}||_{\infty}+||F(T^{-n(x)}x)-F(T^{n(x)}x')||_{\infty}<2\eta$$
We claim $||\tilde{f}-f||_{\infty}<\delta$. Indeed note that $f(x)\in \co\{F(q_{V})|_n|\,0\leq n<M,\, T^{-n}x\in V\in\gamma\}$, and thus it is enough to to show for $x\in X$ and $0\leq n<M$ such that  $T^{-n}x\in V\in\gamma$, that $||\tilde{f}(x)-F(q_{V})|_n||_{\infty}<\delta$. As $\gamma\succ\alpha^{N+1}$ and $M=\frac{N}{2}$, we may find $U\in \alpha$ such that $x, T^n q_{V}\in U$. As $\max_{U\in\alpha}\diam(\tilde{f}(U))<\frac{\delta}{2}$
and $\max_{U\in\gamma}||F(q_{U})-\tilde{v}_{U}||_{\infty}<\frac{\delta}{2}$, we conclude  $||\tilde{f}(x)-F(q_{V})|_n||_{\infty}\leq||\tilde{f}(x)-\tilde{f}(T^n q_{V})||_{\infty}+||F(q_{U})|_n-\tilde{v}_{U}|_n||_{\infty}<\delta$.

We now show that $f\in D_{\epsilon}^{\pi}$.
Fix $x,y\in X$. Assume for a contradiction $f(T^{a}x)=f(T^{a}y)$
for all $a\in\mathbb{Z}$ and $\pi(x)=\pi(y)$. The latter equality
implies $\pi(T^{a}x)=\pi(T^{a}y)$ for all $a\in\mathbb{Z}$, which
implies $n(\pi(T^{a}x))=n(\pi(T^{a}y))$ for all $a\in\mathbb{Z}$.
Notice that by Remark \ref{rem:at most one bad index (global case)}
there is at most one index $-\frac{3}{2}M\leq j\leq\frac{M}{2}-2$
for which $n(T^{j+1}x)\neq n(T^{j}x)+1$. By Lemma \ref{lem:existence of one good long segment}
one can find an index $-\frac{3}{2}M\leq r\leq0$ so that for $r\leq s\leq r+\frac{M}{2}-1$, $\underline{n}(T^{s}x)=\underline{n}(T^{r}x)+s-r$, $\overline{n}(T^{s}x)=\overline{n}(T^{r}x)+s-r$
and $\underline{n}(T^{r}x)\leq\frac{M}{2}$. Denote $\lambda=n'(T^{r}x)$,
$a=\underline{n}(T^{r}x)\leq\frac{M}{2}$ . Recall $\frac{M}{2}=4S$.
Substituting $T^{s}x,T^{s}y$ for $r\leq s\leq r+4S-1$ in Equation
(\ref{eq:f'-1}), we conclude from the equality $I_{f}(x)|_{r}^{r+4S-1}=I_{f}(y)|_{r}^{r+4S-1}$
and $n(\pi(T^{a}x))=n(\pi(T^{a}y))$ for all $a\in\mathbb{Z}$:

\[
(1-\lambda)F(T^{r-a}x)|_{a}^{a+4S-1}+\lambda F(T^{r-a-1}x)|_{a+1}^{a+4S}=(1-\lambda)F(T^{r-a}y)|_{a}^{a+4S-1}+\lambda F(T^{r-a-1}y)|_{a+1}^{a+4S}
\]
As $1-\lambda\neq 0$, by property (\ref{enu:linear-independence-1}) above, there exist $U\in\gamma\succ\alpha^{N+1}$
so that $T^{r-a}x,T^{r-a}y\in U$. As $N=-\frac{3}{2}M-\frac{M}{2}\leq r-a\leq0$
we can find $V\in\alpha$, so that $x,y\in V$. As $x,y$ were arbitrary
and $\max_{U\in\alpha}\diam(U)<\epsilon$, we conclude that for every
$z\in([0,1]^{d})^{\mathbb{Z}}\times Z$, $\diam((I{}_{f}\times\pi)^{-1}(z)<\epsilon$.

\end{proof}
\begin{lem}
\label{lem:D_epsilon is open}Let $\epsilon>0$. $D_{\epsilon}^{\pi}$
is open in $C(X,[0,1]^{d})$.\end{lem}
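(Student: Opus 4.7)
The plan is a standard compactness-plus-perturbation argument. The key observation is that $f\in D_\epsilon^\pi$ can be recast as a uniform separation statement on a compact set: namely, define
$$K_\epsilon=\{(x,y)\in X\times X\,:\,d(x,y)\geq\epsilon\text{ and }\pi(x)=\pi(y)\},$$
which is closed in $X\times X$ (both conditions are closed, $\pi$ being continuous), hence compact. Unwinding the definition of $\epsilon$-embedding, $f\in D_\epsilon^\pi$ if and only if for every $(x,y)\in K_\epsilon$ there exists $n\in\mathbb{Z}$ with $f(T^n x)\neq f(T^n y)$.

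Fix $f\in D_\epsilon^\pi$. For each $(x,y)\in K_\epsilon$ choose $n=n(x,y)\in\mathbb{Z}$ and a constant $\eta(x,y)>0$ with $\|f(T^n x)-f(T^n y)\|_\infty>2\eta(x,y)$. By continuity of $f\circ T^n$ on $X$, there is an open neighborhood $V_{(x,y)}$ of $(x,y)$ in $X\times X$ such that $\|f(T^n x')-f(T^n y')\|_\infty>\eta(x,y)$ for every $(x',y')\in V_{(x,y)}$. By compactness, finitely many such neighborhoods $V_{(x_1,y_1)},\ldots,V_{(x_N,y_N)}$ cover $K_\epsilon$, with associated integers $n_1,\ldots,n_N$ and constants $\eta_1,\ldots,\eta_N>0$. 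Set $\eta=\min_i\eta_i>0$ and $\delta=\eta/4$.

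Now let $g\in C(X,[0,1]^d)$ satisfy $\|g-f\|_\infty<\delta$, and let $(x',y')\in K_\epsilon$ be arbitrary. Pick $i$ with $(x',y')\in V_{(x_i,y_i)}$; then a direct estimate gives
$$\|g(T^{n_i}x')-g(T^{n_i}y')\|_\infty\geq\|f(T^{n_i}x')-f(T^{n_i}y')\|_\infty-2\|g-f\|_\infty>\eta_i-2\delta\geq\eta/2>0,$$
so $I_g(x')\neq I_g(y')$. Since $(x',y')\in K_\epsilon$ was arbitrary, $g\in D_\epsilon^\pi$, and the open $\delta$-ball around $f$ in $C(X,[0,1]^d)$ lies in $D_\epsilon^\pi$. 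The only nontrivial point is the reduction from the a priori unbounded set of required coordinate indices $n=n(x,y)$ to a finite collection $\{n_1,\ldots,n_N\}$, which is precisely what compactness of $K_\epsilon$ provides; the rest is a triangle-inequality perturbation estimate, so no genuine obstacle is expected.
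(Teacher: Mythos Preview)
Your proof is correct. Both your argument and the paper's rest on compactness, but the implementations differ. You work on the \emph{domain side}: you introduce the compact set $K_\epsilon=\{(x,y):d(x,y)\ge\epsilon,\ \pi(x)=\pi(y)\}$, cover it by finitely many neighborhoods on which some fixed iterate $T^{n_i}$ already separates by a definite amount, and then perturb. The paper instead works on the \emph{image side}: it shows (by a sequential-compactness contradiction in $X$) that there is a uniform $\tau>0$ with $\mathrm{diam}\big((I_f\times\pi)^{-1}(B_\tau(z))\big)<\epsilon$ for every $z$, and then observes that any $g$ with $\|g-f\|_\infty<\tau/2$ sends a would-be collision into such a $\tau$-ball. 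Your route has the minor bonus of making explicit that only finitely many iterates $n_1,\dots,n_N$ are needed to witness openness, and it avoids any reference to the metric on the infinite product $([0,1]^d)^{\mathbb{Z}}$; the paper's route is a touch more conceptual in that it upgrades the $\epsilon$-embedding condition to a robust ``thickened fiber'' version before perturbing. Either way the content is the same compactness-plus-triangle-inequality step.
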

\begin{proof}
Let $f\in D_{\epsilon}^{\pi}$. We claim there exist $\tau>0$ so
that for every $z\in([0,1]^{d})^{\mathbb{Z}}\times F\triangleq R$,
$\diam((I{}_{f}\times\pi)^{-1}(B_{\tau}(z)))<\epsilon$. Notice that
by the definition of $D_{\epsilon}^{\pi}$ (Equation (\ref{eq:D_epsilon^pi})),
the previous claim is true when $B_{\tau}(z)$ is replaced by $z$. Now assume for a contradiction
that the claim does not hold. Then for all $i\in\mathbb{N},$ one
can find $z_{i}\in R$, $x_{i},y_{i}\in X$, , so that $I{}_{f}\times\pi(x_{i}),I{}_{f}\times\pi(y_{i})\in B_{\frac{1}{i}}(z_{i})$
and $d(x_{i},y_{i})\geq\epsilon$. By compactness there exist $x,y\in X$
so that $d(x,y)\geq\epsilon$ and $I{}_{f}\times\pi(x)=I{}_{f}\times\pi(y)$.
This is a contradiction to $f\in D_{\epsilon}^{\pi}$. Now let $g\in C(X,[0,1]^{d})$
so that $||f-g||_{\infty}<\frac{\tau}{2}$. We claim $g\in D_{\epsilon}^{\pi}$.
If $I{}_{g}\times\pi(x)=I{}_{g}\times\pi(y)$, then $||I{}_{f}\times\pi(x)-I{}_{f}\times\pi(y)||_{\infty}<\tau$,
which implies $d(x,y)<\epsilon$.
\end{proof}
\appendix
\renewcommand{\thesection}{\hspace*{-8pt}}
\section{}
\renewcommand{\thesection}{\Alph{section}}
\renewcommand{\thelem}{\Alph{lemma}}
\begin{lem}
\label{lem:closed marker iff open} Let $n\in\mathbb{N}$. $(X,T)$
has a closed $n$-marker iff $(X,T)$ has an open $n$-marker.\end{lem}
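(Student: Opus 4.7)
The plan is to handle the two implications separately; in both directions the argument is a routine compactness manipulation. The key observation that makes everything go smoothly is that the disjointness condition $F\cap T^{i}(F)=\emptyset$ for $i=1,\ldots,n-1$ is inherited by every \emph{subset} of $F$, while the cover condition $\bigcup_{i=1}^{m}T^{i}(F)=X$ is preserved by every \emph{superset} of $F$. So in each direction one only needs to check that the ``hard'' property survives.

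For the direction closed $\Rightarrow$ open, starting from a closed $n$-marker $F$, I would thicken it. Since $F$ and $T^{i}(F)$ are disjoint compact sets for each $i=1,\ldots,n-1$, the distance $dist(F,T^{i}(F))$ is strictly positive, and combining this with uniform continuity of $T^{i}$ on the compact metric space $X$ produces $\epsilon>0$ such that the open tube $F'\triangleq B_{\epsilon}(F)$ still satisfies $F'\cap T^{i}(F')=\emptyset$ for every $i=1,\ldots,n-1$. Since $F\subseteq F'$, the cover property transfers to $F'$ for free, so $F'$ is the required open $n$-marker.

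For the converse, given an open $n$-marker $F$, I would shrink it. For each $x\in F$ choose an open neighborhood $W_{x}\ni x$ with $\overline{W}_{x}\subseteq F$. The family $\{T^{i}W_{x}\,|\,x\in F,\ 1\leq i\leq m\}$ is an open cover of the compact space $X$, from which I would extract a finite subcover $\{T^{i_{j}}W_{x_{j}}\}_{j=1}^{k}$. Setting $F''\triangleq\bigcup_{j=1}^{k}\overline{W}_{x_{j}}$ gives a finite union of closed sets, hence closed, contained in $F$; the disjointness from its iterates is then inherited automatically from that of $F$, while the inclusion $X\subseteq\bigcup_{j=1}^{k}T^{i_{j}}\overline{W}_{x_{j}}\subseteq\bigcup_{i=1}^{\max_{j}i_{j}}T^{i}(F'')$ supplies the cover condition with the bound $m''=\max_{j}i_{j}$.

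Neither direction presents a substantive obstacle; the lemma is really the statement that the definition of the marker property is insensitive to the topological regularity class of the marker, and the only point to verify is that the respective operation (thickening in one direction, replacing by a closed subset of a finite open refinement in the other) preserves the complementary condition.
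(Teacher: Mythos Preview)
Your proof is correct. The first direction (closed $\Rightarrow$ open) is identical in spirit to the paper's: both thicken the closed marker to an $\epsilon$-tube $B_{\epsilon}(F)$, using compactness to get the required separation.

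For the converse (open $\Rightarrow$ closed) your argument differs from the paper's. The paper shrinks $U$ via \emph{inward tubes}: it takes a Lebesgue number $\epsilon$ for the cover $\{T^{i}U\}_{i=1}^{m}$, then chooses $0<\delta<\epsilon$ small enough that $B_{-\epsilon/2}(T^{i}U)\subset T^{i}\overline{B}_{-\delta}(U)$ for each $i$, and sets $F=\overline{B}_{-\delta}(U)$. Your route---extract a finite subcover of $\{T^{i}W_{x}\}$ and take the union of the corresponding closures---avoids both the Lebesgue number and the somewhat delicate uniform choice of $\delta$ needed to push the inward tube through the maps $T^{i}$. In exchange, the paper's construction yields a single canonical closed set $\overline{B}_{-\delta}(U)$, whereas yours produces a finite union depending on the choice of subcover; but for the purposes of this lemma that difference is immaterial. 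Both arguments are short and equivalent in strength.
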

\begin{proof}
Let $F$ be a closed $n$-marker such that $\{T^{i}(F)\}_{i=1}^{m}$
cover $X$ for some $m\in\mathbb{N}$. Let $\epsilon>0$ be such that $B_{\epsilon}(F)\cap T^{i}B_{\epsilon}(F)=\emptyset$ for $i=1,2,\dots,n-1$. Define $U=B_{\epsilon}(F)$. Clearly $U$ is
an open $n$-marker. Now let $U$ be an open $n$-marker. Let $\epsilon>0$
be the Lebesgue number for the covering $\{T^{i}(U)\}_{i=1}^{m}$, i.e.
for every $x\in X$, there exists $1\leq i\leq m$ so that $B_{\epsilon}(x)\subset T^{i}(U)$.
Let $0<\delta<\epsilon$ be so that $B_{-\frac{\epsilon}{2}}(T^{i}U)\subset T^{i}\overline{B}_{-\delta}(U)$
for $0\leq i<m$. Note that for every $x\in X$, there exists $1\leq i\leq m$
so that $x\in B_{-\frac{\epsilon}{2}}(T^{i}U)$. Define $F=\overline{B}_{-\delta}(U)$.
Clearly $F$ is a closed $n$-marker.\end{proof}
\begin{lem}
\label{lem:D_K is open}For any $K\subset(X\times X)\setminus\triangle$
compact, $D_{K}$ is open in \textup{$C(X,[0,1]^{d})$}.\end{lem}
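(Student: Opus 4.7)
The plan is to extract from the hypothesis $f\in D_{K}$ a \emph{uniform} lower bound on the separation $\|f(T^{n}x)-f(T^{n}y)\|_{\infty}$ (for some $n=n(x,y)$) as $(x,y)$ ranges over $K$, and then observe that such a uniform separation is preserved under sufficiently small $C^{0}$-perturbations of $f$ by the triangle inequality.

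First I would fix $f\in D_{K}$. For each pair $(x,y)\in K$, the $K$-compatibility hypothesis supplies some $n=n(x,y)\in\mathbb{Z}$ with
\[
\eta(x,y)\triangleq\|f(T^{n}x)-f(T^{n}y)\|_{\infty}>0.
\]
Since $f\circ T^{n}$ is continuous on $X$, the map $(x',y')\mapsto\|f(T^{n}x')-f(T^{n}y')\|_{\infty}$ is continuous on $X\times X$, so this strict positivity persists on an open neighborhood $W_{x,y}$ of $(x,y)$, with uniform lower bound $\eta(x,y)/2$.

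Next, I would invoke the compactness of $K$ to extract a finite subcover $W_{x_{1},y_{1}},\ldots,W_{x_{r},y_{r}}$ of $K$. Setting $n_{i}\triangleq n(x_{i},y_{i})$ and $\eta_{*}\triangleq\tfrac{1}{2}\min_{1\leq i\leq r}\eta(x_{i},y_{i})>0$, every $(x,y)\in K$ lies in some $W_{x_{i},y_{i}}$ and hence satisfies $\|f(T^{n_{i}}x)-f(T^{n_{i}}y)\|_{\infty}>\eta_{*}$. Now for any $g\in C(X,[0,1]^{d})$ with $\|g-f\|_{\infty}<\eta_{*}/4$, the triangle inequality yields
\[
\|g(T^{n_{i}}x)-g(T^{n_{i}}y)\|_{\infty}\geq\|f(T^{n_{i}}x)-f(T^{n_{i}}y)\|_{\infty}-2\|g-f\|_{\infty}>\eta_{*}-\tfrac{\eta_{*}}{2}=\tfrac{\eta_{*}}{2}>0,
\]
so $g\in D_{K}$. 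This shows that the open ball of radius $\eta_{*}/4$ around $f$ in $C(X,[0,1]^{d})$ is contained in $D_{K}$, establishing openness.

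There is no substantive obstacle here: the only subtlety is that the separating index $n$ depends on the pair $(x,y)$, so a priori it might vary unboundedly over $K$. The role of compactness is precisely to reduce to finitely many indices $n_{i}$, after which a single $\delta=\eta_{*}/4$ suffices uniformly.
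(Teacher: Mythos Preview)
Your proof is correct and follows essentially the same strategy as the paper: use compactness of $K$ to extract a uniform positive lower bound on the separation $\|f(T^{n}x)-f(T^{n}y)\|_{\infty}$, then apply the triangle inequality to absorb small perturbations of $f$. The paper packages the compactness step slightly differently---it defines $\alpha(x,y)\triangleq\sup_{n\in\mathbb{Z}}\|f(T^{n}x)-f(T^{n}y)\|_{\infty}$, notes that $\alpha$ is lower semicontinuous (as a supremum of continuous functions) and strictly positive on $K$, and takes $\epsilon=\min_{K}\alpha>0$---whereas you fix one index $n(x,y)$ per point and pass to a finite subcover; both are standard and equivalent uses of compactness.
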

\begin{proof}
We repeat the argument of Lemma III.4 on p.21 of \cite{J74}. Fix
$f\in D_{K}$. Define $\alpha:K\rightarrow[0,1]$ given by $\alpha(x,y)\triangleq\sup_{n\in\mathbb{Z}}||f(T^{n}x)-f(T^{n}y)||_{\infty}$.
Notice that $\alpha_{|K}>0$ and that $\alpha$ is lower semicontinuous,
i.e. for any $a>0$ $\{(x,y)\in K|\,\alpha(x,y)\leq a\}$ is closed.
The latter implies $\alpha$ attains a minimum $\epsilon$ on $K$
and the former implies $\epsilon>0$. Conclude that for $g\in C(X,[0,1]^{d})$,
with $||g-f||_{\infty}<\frac{\epsilon}{3}$, one has $g\in D_{K}$
as $||g(T^{n}x)-g(T^{n}y)||_{\infty}\geq||f(T^{n}x)-f(T^{n}y)||_{\infty}-\frac{2\epsilon}{3}\geq\frac{\epsilon}{3}$
for any $(x,y)\in K$.
\end{proof}
Let $N\in\mathbb{N}$. Recall we denote $\mathbb{Z}_{N}=$$(\mathbb{Z}/N\mathbb{Z},\oplus)$.
\begin{lem}
\label{lem:one third good indices}Let $N\in\mathbb{N}$. Let $y\in\mathbb{Z}_{N}\setminus\{0\}$.
Then there exists $A\subset\mathbb{Z}_{N}$ with $0\in A$ so that
$|A|\geq\lceil\frac{N}{3}\rceil$and $(y\oplus A)\cap A=\emptyset$.\end{lem}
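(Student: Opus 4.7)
The plan is to exploit the action of $a\mapsto a\oplus y$ on $\mathbb{Z}_{N}$, which is a bijection whose orbits are precisely the cosets of the cyclic subgroup $H=\langle y\rangle$. Write $d=\gcd(y,N)$ and $m=N/d$, so $|H|=m$, there are exactly $d$ cosets, and on each coset the shift-by-$y$ map acts as a cyclic shift of length $m$. The condition $(y\oplus A)\cap A=\emptyset$ says that within each coset, viewed as a cycle $C_{m}$, the set $A$ must be an independent set for the adjacency relation given by $y$. The maximum independent set in a cycle of length $m$ has size $\lfloor m/2\rfloor$, and this can be attained on \emph{every} coset simultaneously, with the additional freedom to require that $0$ lie in the chosen set on the coset $H$ itself. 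Concretely, take the subset of $H$ to be $\{0,2y,4y,\ldots,(2\lfloor m/2\rfloor - 2)y\}$ (i.e.\ every other vertex starting at $0$) and, for each of the other $d-1$ cosets $c+H$, take $\{c,c+2y,\ldots,c+(2\lfloor m/2\rfloor-2)y\}$. One checks directly that no element and its $y$-translate both lie in the resulting set.

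Step one: verify these choices are disjoint from their $y$-translates by unwinding the definition on each coset separately; this is a short computation using that $m$ is the order of $y$. Step two: assemble the global set $A$ as the union over the $d$ cosets, noting $0\in A$ by construction, and compute $|A|=d\lfloor m/2\rfloor$. This equals $N/2$ when $m$ is even, and $d(m-1)/2=(N-d)/2$ when $m$ is odd.

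Step three, which is where the precise constant $\lceil N/3\rceil$ must be checked, splits into two cases. When $m$ is even, $|A|=N/2\geq N/3$, and since $|A|$ is an integer this forces $|A|\geq\lceil N/3\rceil$. When $m$ is odd, the hypothesis $y\neq 0$ gives $m\geq 3$, hence $d=N/m\leq N/3$, so $|A|=(N-d)/2\geq (N-N/3)/2=N/3$; again $|A|$ is an integer, so $|A|\geq\lceil N/3\rceil$. The worst case is $m=3$, where equality $|A|=N/3$ is achieved, explaining why the constant $\lceil N/3\rceil$ is the right one.

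The only mildly delicate point, and the place I would be most careful, is the odd-$m$ case together with the constraint $0\in A$: one must verify that the maximum independent set of size $(m-1)/2$ can indeed be chosen to contain a prescribed vertex of the cycle, which is clear for the cycle $C_{m}$ but worth spelling out. Everything else is bookkeeping, and no generality is lost by treating the two parities of $m$ separately as above.
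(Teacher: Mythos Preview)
Your proof is correct, but it takes a different route from the paper. The paper gives a three-line greedy argument: start with $A=\{0\}$, and whenever $|A'|<\lceil N/3\rceil$ with $(y\oplus A')\cap A'=\emptyset$, observe that $B=A'\cup(y\oplus A')\cup(-y\oplus A')$ has $|B|\leq 3|A'|<N$, so one may pick $a\in\mathbb{Z}_N\setminus B$ and enlarge $A'$ to $A'\cup\{a\}$ while preserving the disjointness condition. No structural analysis of $\mathbb{Z}_N$ is needed.

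Your approach, by contrast, decomposes $\mathbb{Z}_N$ into the $d=\gcd(y,N)$ cosets of $\langle y\rangle$, each a cycle of length $m=N/d$, and takes a maximum independent set of size $\lfloor m/2\rfloor$ on each. This is longer but yields more: you actually produce a set of size $d\lfloor m/2\rfloor$, which is the \emph{optimal} size for such an $A$ and is in general strictly larger than $\lceil N/3\rceil$ (equality only when $m=3$). The paper's greedy proof is quicker and avoids the parity split, while your construction identifies the true extremal bound and explains why $\lceil N/3\rceil$ is the worst case. One small wording note: when you write ``the hypothesis $y\neq 0$ gives $m\geq 3$'', you are implicitly using that $m$ is odd in that branch (since $y\neq 0$ by itself only gives $m\geq 2$); this is clear from context but worth making explicit.
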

\begin{proof}
Start by defining $A={\{0\}}$. Note $(y\oplus A)\cap A=\emptyset.$
Let $|A'|<\lceil\frac{N}{3}\rceil$ with $(y\oplus A')\cap A'=\emptyset.$
Define $B=A'\cup(y\oplus A)\cup(-y\oplus A)$. Note $|B|<N$. Choose
$a\in\mathbb{Z}_{N}\setminus B$. Redefine $A=A'\cup\{a\}$ and note
$(y\oplus A)\cap A=\emptyset.$\end{proof}
\begin{lem}
\label{lem:existence of one good long segment}Let $n:X\rightarrow\mathbb{R}$
be a function, $x\in X$ and $M\in\mathbb{N},$ an even integer. Assume
there is at most one index $-\frac{3}{2}M\leq j\leq\frac{M}{2}-2$
for which \textup{$n(T^{j+1}x)\neq n(T^{j}x)+1$. Then one can find
an index $-\frac{3}{2}M\leq r\leq0$ so that $\lfloor n(T^{r}x)\rfloor\,\mod M\leq\frac{M}{2}$
and for $r\leq s\leq r+\frac{M}{2}-1$: }

\begin{equation}
\lceil n(T^{s}x)\rceil\,\mod M=\lceil n(T^{r}x)\rceil\,\mod M+s-r\label{eq:round up mod-1}
\end{equation}

\begin{equation}
\lfloor n(T^{s}x)\rfloor\,\mod M=\lfloor n(T^{r}x)\rfloor\,\mod M+s-r\label{eq:round down mod-1}
\end{equation}
\end{lem}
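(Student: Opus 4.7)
My approach is to find $r$ with two properties simultaneously: the whole segment $[r,r+M/2-1]$ avoids the (at most one) index where $n$ fails to increment by $1$, and the floor residue $\lfloor n(T^{r}x)\rfloor\bmod M$ is small enough that adding $s-r\leq M/2-1$ causes no wraparound. The plan is first to locate a long block of ``admissible'' starting points and then to use a pigeonhole on residues mod $M$ inside that block.

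Let $j^{\ast}\in[-3M/2,M/2-2]$ denote the (unique) bad index where $n(T^{j^{\ast}+1}x)\neq n(T^{j^{\ast}}x)+1$, if any (otherwise the argument only simplifies). Call $r\in[-3M/2,0]$ \emph{admissible} if $j^{\ast}\notin\{r,r+1,\ldots,r+M/2-2\}$; equivalently, $n(T^{r+k}x)=n(T^{r}x)+k$ for every $0\leq k\leq M/2-1$. The non-admissible $r$ are contained in the interval $[j^{\ast}-M/2+2,j^{\ast}]$, which contributes at most $M/2-1$ integers to $[-3M/2,0]$. Hence removing them from the $3M/2+1$ integers of $[-3M/2,0]$ leaves at least $M+2$ admissible indices, split into at most two maximal runs; pigeonhole yields a run $I$ of $\geq M/2+1$ consecutive admissible indices.

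The key observation is that for any $r,r+1\in I$, admissibility of $r$ forces $j^{\ast}\neq r$, so the transition at $r$ is good and $a_{r+1}=(a_{r}+1)\bmod M$ where $a_{r}:=\lfloor n(T^{r}x)\rfloor\bmod M$. Therefore $\{a_{r}:r\in I\}$ consists of $|I|\geq M/2+1$ consecutive residues mod $M$, and since $\{M/2+1,\ldots,M-1\}$ has only $M/2-1$ elements, the run hits a residue in $\{0,1,\ldots,M/2\}$; pick such an $r$. For this $r$ the segment is admissible, so $\lfloor n(T^{r+k}x)\rfloor=\lfloor n(T^{r}x)\rfloor+k$ and $\lceil n(T^{r+k}x)\rceil=\lceil n(T^{r}x)\rceil+k$ for $0\leq k\leq M/2-1$, and $a_{r}\leq M/2$ prevents wraparound when reducing mod $M$, establishing both displayed identities together with the bound $\lfloor n(T^{r}x)\rfloor\bmod M\leq M/2$.

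The principal care required is in the pigeonhole step when the bad window $[j^{\ast}-M/2+2,j^{\ast}]$ is close to the boundary of $[-3M/2,0]$: then the admissible set degenerates into a single interval, which still has length $\geq M+2\geq M/2+1$, so the argument goes through uniformly. A second subtle point is the ceiling identity when $\lfloor n(T^{r}x)\rfloor\bmod M=M/2$ with $n(T^{r}x)\notin\mathbb{Z}$; if needed one strengthens the residue choice to $a_{r}\leq M/2-1$, which remains available because $|I|\geq M/2+1>M/2=|\{0,\ldots,M/2-1\}|^{c}\cap\mathbb{Z}_{M}|$ still forces at least one hit inside $\{0,\ldots,M/2-1\}$.
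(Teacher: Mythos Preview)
Your proof is correct and rests on the same two ideas as the paper's: (i) the single bad index leaves a long block of ``good'' positions, and (ii) along such a block the residues $\lfloor n(T^{\cdot}x)\rfloor\bmod M$ advance by $1$, so a small residue must occur. The organization differs slightly: the paper works directly in the orbit interval $[-\tfrac{3}{2}M,\tfrac{M}{2}-1]$, picks the longer of the two pieces separated by the bad index (length $\geq M$), locates $k$ with $n(T^{k}x)\bmod M\in[0,1)$, and then sets $r=k$ or $r=k-\tfrac{M}{2}-1$ according to whether enough room remains to the right. You instead parametrize by admissible \emph{starting points} $r\in[-\tfrac{3}{2}M,0]$, count them, extract a run of length $\geq\tfrac{M}{2}+1$, and pigeonhole on residues. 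Your framing is arguably tidier since it targets $r$ directly; the paper's explicit two-case choice yields the specific values $a_{r}\in\{0,\tfrac{M}{2}-1\}$. Your observation about the ceiling identity when $a_{r}=\tfrac{M}{2}$ and $n(T^{r}x)\notin\mathbb{Z}$ is well taken, and your strengthening to $a_{r}\leq\tfrac{M}{2}-1$ (which your run length still guarantees) resolves it; note that the paper's proof in fact always lands on $a_{r}\leq\tfrac{M}{2}-1$ as well.
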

\begin{proof}
As the segment $-\frac{3}{2}M\leq j\leq\frac{M}{2}-2$ has total length
$M+(M-1)$, one of the intervals $\{-\frac{3}{2}M,-\frac{3}{2}M+1,\ldots,j\}$,$\{j+1,\ldots,\frac{M}{2}-2\}$
is at least of length $M$. Denote this interval by $\{a,\ldots b\}$
($b-a\geq M-1$). Note $n(T^{s}x)\,\mod M=n(T^{a}x)+s-a\, \mod M$
for $a\leq s\leq b$. Conclude there is $a\leq k\leq b$ so that $n(T^{k}x)\,\mod M\in[0,1)$.
If $b-k+1\geq\frac{M}{2}$, then let $r=k$. In this case $\lfloor n(T^{r}x)\rfloor\,\mod M=0$
and $n(T^{s}x_{i})\,\mod M\in[0,1+s-r)\subset[0,M-1)$ for $r\leq s\leq r+\frac{M}{2}-1$.
If $b-k+1<\frac{M}{2}$, then $k-a\geq b-a+2-\frac{M}{2}$ which implies
$\frac{M}{2}+1\leq k-a$ and we set $r=k-\frac{M}{2}-1\geq a$. In
this case $\lfloor n(T^{r}x)\rfloor\,\mod M\in[\frac{M}{2}-1,\frac{M}{2})$and
$n(T^{s}x_{i})\,\mod M\in[\frac{M}{2}-1,\frac{M}{2}+r-s)\subset[\frac{M}{2}-1,M-1)$
for $r\leq s\leq r+\frac{M}{2}-1$. In both cases we see that (\ref{eq:round up mod-1})
and (\ref{eq:round down mod-1}) hold. This also implies $r\leq-1$
as $r+\frac{M}{2}-1\leq\frac{M}{2}-2$. \end{proof}
\begin{lem}
\label{lem:Tietze Extension}Let $B\subset X$ be a closed set and
$\epsilon>0$. Let $f':B\rightarrow[0,1]^{d}$ and $\tilde{f}:X\rightarrow[0,1]^{d}$
be continuous functions so that $||\tilde{f}_{|B}-f'||_{B,\infty}<\epsilon$.
Then there exists a continuous function $f:X\rightarrow[0,1]^{d}$
so that $f_{|B}=f'$ and $||\tilde{f}-f||_{\infty}<\epsilon$.\end{lem}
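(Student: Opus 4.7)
The plan is to reduce the lemma to the classical Tietze extension theorem applied coordinate-wise, followed by a coordinate-wise clipping to stay inside $[0,1]^d$.

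First I would set $h\triangleq f'-\tilde{f}_{|B}:B\to\R^{d}$. By hypothesis $M\triangleq\|h\|_{B,\infty}<\epsilon$, and we may assume $M>0$ (otherwise $f=\tilde{f}$ already works). For each coordinate $1\leq i\leq d$, the component $h_{i}:B\to[-M,M]$ is continuous on a closed subset of the compact metric (hence normal) space $X$, so the classical Tietze extension theorem yields a continuous extension $H_{i}:X\to[-M,M]$. Collecting these components gives a continuous $H:X\to[-M,M]^{d}$ with $H_{|B}=h$ and $\|H\|_{\infty}\leq M<\epsilon$.

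Next I would define $f:X\to[0,1]^{d}$ coordinate-wise by the clipping
\[
f_{i}(x)\triangleq\min\bigl\{1,\max\{0,\tilde{f}_{i}(x)+H_{i}(x)\}\bigr\},\qquad 1\leq i\leq d.
\]
This $f$ is continuous as a composition of continuous functions. For $x\in B$ we have $\tilde{f}(x)+H(x)=\tilde{f}(x)+h(x)=f'(x)\in[0,1]^{d}$, so the clipping is inactive and $f_{|B}=f'$ as required.

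Finally I would verify the sup-norm bound coordinate-wise. Fix $x\in X$ and $i$, and write $y=\tilde{f}_{i}(x)+H_{i}(x)$. If $y\in[0,1]$ then $f_{i}(x)=y$ and $|f_{i}(x)-\tilde{f}_{i}(x)|=|H_{i}(x)|\leq M<\epsilon$. If $y>1$ then $f_{i}(x)=1$ and $|f_{i}(x)-\tilde{f}_{i}(x)|=1-\tilde{f}_{i}(x)\leq y-\tilde{f}_{i}(x)=H_{i}(x)\leq M<\epsilon$; the case $y<0$ is symmetric. Taking the supremum over $i$ and $x$ gives $\|f-\tilde{f}\|_{\infty}<\epsilon$. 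There is no serious obstacle here; the only subtlety worth explicitly flagging is that a naive extension $\tilde{f}+H$ may leave $[0,1]^{d}$, which is exactly why the coordinate-wise clipping step is needed and why the estimate must be checked separately in the truncated case.
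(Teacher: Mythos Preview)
Your proof is correct and follows essentially the same approach as the paper: apply Tietze to the difference $f'-\tilde{f}_{|B}$ to get an extension $H$ with $\|H\|_\infty<\epsilon$, then set $f$ to be the coordinate-wise clipping of $\tilde{f}+H$ to $[0,1]^d$ and verify the estimate case-by-case. The paper's argument is identical up to notation (it writes the clipping as a map $\vec{h}$ and uses an intermediate $\epsilon'$ in place of your $M$).
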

\begin{proof}
Apply the Tietze extension theorem (\cite[Theorem 3.2]{M75}) to $\tilde{f}_{|B}-f':B\rightarrow[-\epsilon',\epsilon']^{d}$
for some $\epsilon>\epsilon'>0$, to find a continuous function $g:X\rightarrow[-\epsilon',\epsilon']^{d}$
so that $g_{|B}=f'-\tilde{f}_{|B}$, i.e. on $B$, $f'=\tilde{f}_{|B}+g$.
Let $h:\mathbb{R}\rightarrow[0,1]$ be given by $h(t)=1$ for $t\geq1$,
$h(t)=t$ for $0\leq t\leq1$ and $h(t)=0$ for $t\leq0$. Let $\vec{h}:\mathbb{R}^{d}\rightarrow[0,1]^{d}$ be
given by $\vec{h}(x_{1},\ldots,h_{d})=(h(x_{1}),\ldots,h(x_{d}))$.
Finally define:
\[
f=\vec{h}\circ(\tilde{f}+g)
\]
Clearly $f\in C(X,[0,1]^{d})$. Fix $x\in X$ and $0\leq i\leq d-1$.
If $\tilde{f}(x)|_{i}+g(x)|_{i}\in[0,1]$, then $f(x)|_{i}=\tilde{f}(x)|_{i}+g(x)|_{i}$
and $|f(x)|_{i}-\tilde{f}(x)|_{i}|\leq\epsilon'<\epsilon$. If $\tilde{f}(x)|_{i}+g(x)|_{i}\geq1$,
then $1+\epsilon'\geq\tilde{f}(x)|_{i}+g(x)|_{i}\geq1=f(x)|_{i}$
and we have $|f(x)|_{i}-\tilde{f}(x)|_{i}|\leq\epsilon'<\epsilon.$
Similarly for the case $\tilde{f}(x)|_{i}+g(x)|_{i}\leq0$. \end{proof}
\begin{lem}
\label{lem:Almost sure linear independence}Let $m,s\in\mathbb{N}$,
$r\in\mathbb{Z}_{+}$. Let $V\subset\mathbb{R}^{m}$ be a linear subspace
with $\dim(V)=r$. If $r+s\leq m$, then almost surely w.r.t Lebesgue
measure for $(v_{r+1},v_{r+2},\ldots,v_{r+s})\in([0,1]^{m})^{s}$, $\dim(V+\spann((v_{r+1},v_{r+2},\ldots,v_{r+s}))=r+s$,
equivalently $V\cap \spann(v_{r+1},v_{r+2},\ldots,v_{r+s})=\{\vec{0}\}$.
\end{lem}
\begin{proof}
By induction. Assume that almost surely in $(v_{r+1},\ldots v_{i})\in([0,1]^{m})^{i}$,
$i<m$, one has $\dim(V_{i})=i$ for $V_{i}=V+\spann(v_{r+1},v_{r+2},\ldots,v_{i})$.
For any $v_{i+1}\notin V_{i}\cap[0,1]^{m}$, one has $\dim(V+\spann(v_{r+1},v_{r+2},\ldots,v_{i+1}))=i+1$.
Note $\mu(V_{i}\cap[0,1]^{m})=0$ for $\mu$ the Lebesgue measure
in $[0,1]^{m}$. The result now follows from Fubini's theorem. \end{proof}
\begin{defn}
\label{Def: Affinely Independent}$v_{1},\ldots v_{l}\in\mathbb{R}^{m}$
are called \textbf{affinely independent} if for any $\lambda_{i}\in\mathbb{R}$,
$i=1,\ldots,l$ with $\Sigma_{i=1}^{l}\lambda_{i}v_{i}=0$, the equality
$\Sigma_{i=1}^{l}\lambda_{i}=0$ implies $\lambda_{i}=0$ for all
$i=1,\ldots,l$. This is equivalent to the requirement that $v_{2}-v_{1},v_{3}-v_{1},\ldots v_{l}-v_{1}$
are linearly independent. \end{defn}
\begin{lem}
\label{lem:Almost sure affine independence}Let $m,s\in\mathbb{N}$,
$r\in\mathbb{Z}_{+}$. Let $V=\spann(v_{1},\ldots,v_{r})\subset\mathbb{R}^{m}$
be a linear subspace with $\dim(V)=r$. If $r+s\leq m+1$, then almost
surely w.r.t Lebesgue measure for $(v_{r+1},v_{2},\ldots,v_{r+s})\in([0,1]^{m})^{s}$,
$v_{1},v_{2},\ldots,v_{r+s}$ are affinely independent. \end{lem}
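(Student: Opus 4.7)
The plan is to reduce affine independence to linear independence of translated vectors (as in Definition \ref{Def: Affinely Independent}) and then invoke Lemma \ref{lem:Almost sure linear independence}. First I would verify that since $\dim(V) = r$, the vectors $v_{1},\ldots,v_{r}$ are linearly independent. A short computation shows this forces $v_{2}-v_{1},\ldots,v_{r}-v_{1}$ to be linearly independent as well: if $\sum_{i=2}^{r}\lambda_{i}(v_{i}-v_{1})=0$, then $-\bigl(\sum_{i=2}^{r}\lambda_{i}\bigr)v_{1}+\sum_{i=2}^{r}\lambda_{i}v_{i}=0$, and linear independence of $v_{1},\ldots,v_{r}$ forces all $\lambda_{i}=0$. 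Hence $W\triangleq\mathrm{span}(v_{2}-v_{1},\ldots,v_{r}-v_{1})\subset\mathbb{R}^{m}$ is an $(r-1)$-dimensional subspace.

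Next I would perform the translation. Set $w_{j}\triangleq v_{r+j}-v_{1}$ for $j=1,\ldots,s$. By Definition \ref{Def: Affinely Independent}, the affine independence of $v_{1},v_{2},\ldots,v_{r+s}$ is equivalent to the linear independence of $v_{2}-v_{1},\ldots,v_{r+s}-v_{1}$, which is in turn equivalent to $\dim\bigl(W+\mathrm{span}(w_{1},\ldots,w_{s})\bigr)=r+s-1$. The hypothesis $r+s\leq m+1$ is precisely $(r-1)+s\leq m$, which is exactly the dimensional condition required by Lemma \ref{lem:Almost sure linear independence} applied to the subspace $W$ of dimension $r-1$ together with $s$ new random vectors.

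The only subtlety is that Lemma \ref{lem:Almost sure linear independence} is stated for new vectors ranging over $[0,1]^{m}$, whereas here $w_{j}$ ranges over the translated cube $[0,1]^{m}-v_{1}$. This is immaterial: the inductive argument of Lemma \ref{lem:Almost sure linear independence} relies only on the fact that a proper linear subspace of $\mathbb{R}^{m}$ meets any bounded set in a Lebesgue-null set, together with Fubini's theorem, and is therefore insensitive to replacing $[0,1]^{m}$ by any translate. Equivalently, the map $(v_{r+1},\ldots,v_{r+s})\mapsto(w_{1},\ldots,w_{s})$ is a componentwise translation, hence preserves Lebesgue measure on $(\mathbb{R}^{m})^{s}$, so the almost sure statement transfers between the two parametrizations.

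There is no substantive obstacle here: once affine independence is rewritten as a linear independence statement about differences, the result follows immediately from the preceding lemma, with the translation of the cube handled by the translation invariance of Lebesgue measure.
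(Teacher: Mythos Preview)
Your argument is correct and in fact cleaner than the paper's. The paper splits into two cases: for $r+s\leq m$ it invokes Lemma~\ref{lem:Almost sure linear independence} directly to obtain linear (hence affine) independence of $v_{1},\ldots,v_{r+s}$, and only in the boundary case $r+s=m+1$ does it pass to the differences $v_{2}-v_{1},\ldots,v_{m}-v_{1}$ and then add one more random vector. You instead pass to differences immediately, so that the dimensional hypothesis becomes $(r-1)+s\leq m$ and a single application of Lemma~\ref{lem:Almost sure linear independence} (over the translated cube) handles all cases at once. Your observation that the proof of Lemma~\ref{lem:Almost sure linear independence} is insensitive to translating $[0,1]^{m}$ is exactly what the paper is implicitly using in its second case, where it writes $v_{m+1}=v'_{m+1}+v_{1}$ with $v'_{m+1}\in[0,1]^{m}$.

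One small remark: your argument as written assumes $r\geq 1$ so that $v_{1}$ exists as a fixed anchor. For $r=0$ one simply conditions on the first random vector $v_{1}$ and applies your argument with that vector fixed; Fubini then gives the conclusion. This is routine and does not affect the substance of your proof.
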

\begin{proof}
By Lemma \ref{lem:Almost sure linear independence}, if $r+s\leq m$
then almost surely in $(v_{r+1},v_{2},\ldots,v_{r+s})\in([0,1]^{m})^{s}$,
$v_{1},v_{2},\ldots,v_{r+s}$ are linearly independent, a fortiori
affinely independent. We now consider the case $r+s=m+1$. Using the
previous case, almost surely for $(v_{r+1},v_{2},\ldots,v_{m})\in([0,1]^{m})^{m-r}$,
$v_{1},v_{2},\ldots,v_{m}$ are linearly independent. Let $V=\spann(v_{2}-v_{1},\ldots,v_{m}-v_{1})$.
Note that almost surely $\dim(V)=m-1$, which implies $\mu(V\cap[0,1]^{m})=0$
for $\mu$ the Lebesgue measure in $[0,1]^{m}$. Therefore almost
surely for $v'_{m+1}\in[0,1]^{m}$, $\dim(V+\spann(v'_{m+1}))=m$. Define
$v_{m+1}=v'_{m+1}+v_{1}$. Conclude that almost surely $\spann(v_{2}-v_{1},\ldots,v_{m}-v_{1},v_{m+1}-v_{1})=m$.
The result now follows from Fubini's theorem. \end{proof}
\begin{lem}
\label{lem:F periodic disjoint case}Let $\epsilon>0$. Let $n_{1},n_{2},d\in\mathbb{N}$
with $n_{1}\geq n_{2}$. Let $R_{1},R_{2}\subset X$ be closed sets
so that $R_{1}\cap R_{2}=\emptyset$. For $i=1,2$, let $\alpha_{i}$
be open covers of $R_{i}$ with $\ord(\alpha_{i})<\frac{n_{i}d}{2}$.
Assume that for each $i=1,2$ a collection of distinct points $\{q_{U}\}_{U\in \alpha_i}\subset R_i$  and  a collection of vectors $\{\tilde{v}_{U}\}_{U\in \alpha_i}\subset([0,1]^{d})^{n_{i}}$ are given, then
there exist continuous functions $F_{i}:R_{i}\rightarrow([0,1]^{d})^{n_{i}}$,
with the following properties ($i=1,2$):
\begin{enumerate}
\item \label{enu:Very Close}$\forall U\in\alpha_{i}$, $||F_{i}(q_{U})-\tilde{v}_{U}||_{\infty}<\frac{\epsilon}{2}$,
\item \label{enu:convex combination-1-2}$\forall x\in R_i$, $F_{i}(x)\in \co\{F_i(q_{U})|\, x\in U\in\alpha_{i}\}$,
\item \label{enu:F(x)neqF oplus n (y)}If $x'\in R_{1}$ and $y'\in R_{2}$
then $F_{1}(x')\neq(F_{2}(y'))^{\oplus n_{1}}$.
\end{enumerate}
\end{lem}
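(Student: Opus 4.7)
The plan is to represent each $F_i$ as a convex combination of vectors $v_U^{(i)} \in [0,1]^{n_i d}$ (one per $U \in \alpha_i$) via a partition of unity, and to choose the vectors in sufficiently general position. First, for each $i \in \{1,2\}$, I would construct a partition of unity $\{\psi_U^{(i)}\}_{U \in \alpha_i}$ on $R_i$, subordinate to $\alpha_i$, with the additional property that $\psi_U^{(i)}(q_U) = 1$ for every $U$. Since the $q_U$'s are distinct, this is obtained by picking pairwise disjoint open neighborhoods $W_U \ni q_U$ with $\overline{W}_U \subset U$, Urysohn bump functions $h_U$ satisfying $h_U(q_U) = 1$ and $\mathrm{supp}(h_U) \subset W_U$, and setting $\psi_U^{(i)} \triangleq h_U + \bigl(1 - \sum_{U'} h_{U'}\bigr)\psi'_U$ for any partition $\{\psi'_U\}$ subordinate to $\alpha_i$. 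Then define
\[
F_i(x) \triangleq \sum_{U \in \alpha_i} \psi_U^{(i)}(x)\, v_U^{(i)}, \qquad x \in R_i.
\]
This immediately yields $F_i(q_U) = v_U^{(i)}$ and $F_i(x) \in \mathrm{co}\{v_U^{(i)} : x \in U\}$, so properties (1) and (2) reduce to the single requirement $\|v_U^{(i)} - \tilde{v}_U\|_\infty < \epsilon/2$.

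For property (3), observe that $F_1(x')$ lies in $\mathrm{co}\{v_U^{(1)} : x' \in U\}$, whose vertex set has cardinality at most $\mathrm{ord}(\alpha_1)+1$, while $(F_2(y'))^{\oplus n_1}$ lies in $\mathrm{co}\{(v_W^{(2)})^{\oplus n_1} : y' \in W\}$, with at most $\mathrm{ord}(\alpha_2)+1$ vertices. Hence (3) follows if for every $I_1 \subset \alpha_1$ and $I_2 \subset \alpha_2$ with $|I_i| \leq \mathrm{ord}(\alpha_i)+1$, the combined family
\[
\{v_U^{(1)} : U \in I_1\}\ \cup\ \{(v_W^{(2)})^{\oplus n_1} : W \in I_2\}
\]
is affinely independent in $\mathbb{R}^{n_1 d}$: for a putative coincidence $F_1(x') = (F_2(y'))^{\oplus n_1}$ would produce a nontrivial affine relation (coefficients summing to $1-1=0$, not all zero) among this family, contradicting affine independence.

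The dimension count is favorable: $\mathrm{ord}(\alpha_1)+\mathrm{ord}(\alpha_2) < n_1 d/2 + n_2 d/2 \leq n_1 d$, and since the orders are integers, $|I_1|+|I_2| \leq \mathrm{ord}(\alpha_1)+\mathrm{ord}(\alpha_2)+2 \leq n_1 d + 1$, which matches the maximal cardinality of an affinely independent family in $\mathbb{R}^{n_1 d}$. The map $v \mapsto v^{\oplus n_1}$ is a linear injection of $\mathbb{R}^{n_2 d}$ into $\mathbb{R}^{n_1 d}$, so a Lebesgue-generic $v_W^{(2)}$ produces a $(v_W^{(2)})^{\oplus n_1}$ generic inside its fixed $n_2 d$-dimensional image subspace. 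An iterated application of Lemma \ref{lem:Almost sure affine independence}, first to the $(v_W^{(2)})^{\oplus n_1}$ for $W \in \alpha_2$, and then adjoining the $v_U^{(1)}$ for $U \in \alpha_1$, shows that for Lebesgue-almost every choice every sub-collection of size at most $n_1 d + 1$ is affinely independent. Since $\alpha_1$ and $\alpha_2$ are finite, this amounts to a finite intersection of open dense conditions, hence simultaneously realizable; in particular one can select the $v_U^{(i)}$ inside the open set $\prod_{U\in\alpha_1}\bigl(B_{\epsilon/2}(\tilde{v}_U)\cap[0,1]^{n_1 d}\bigr)\times\prod_{W\in\alpha_2}\bigl(B_{\epsilon/2}(\tilde{v}_W)\cap[0,1]^{n_2 d}\bigr)$.

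The main hurdle I anticipate is the interplay between the dimension counting and the constraint that the vectors $(v_W^{(2)})^{\oplus n_1}$ are confined to a fixed $n_2 d$-dimensional subspace of $\mathbb{R}^{n_1 d}$. The injectivity and linearity of the diagonal embedding $v \mapsto v^{\oplus n_1}$, together with the hypothesis $n_2 \leq n_1$, are precisely what guarantee that the required affine independence statements survive this subspace restriction.
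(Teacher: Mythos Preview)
Your argument is correct and follows the same skeleton as the paper's proof---partition of unity with $\psi_U(q_U)=1$, then $F_i(x)=\sum_U \psi_U(x)\vec v_U$, then a generic choice of the $\vec v_U$---but diverges at one point worth noting. The paper \emph{fixes} $\vec v_U=\tilde v_U$ for $U\in\alpha_2$ and perturbs only the $\alpha_1$ vectors; since the fixed $(\tilde v_U)^{\oplus n_1}$ may well be linearly dependent, the paper is forced into a case split (linearly independent $\Rightarrow$ invoke Lemma~\ref{lem:Almost sure affine independence}; linearly dependent $\Rightarrow$ invoke Lemma~\ref{lem:Almost sure linear independence} and argue via complementary subspaces). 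You instead perturb \emph{both} families, which lets you arrange that every subfamily $\{(v_W^{(2)})^{\oplus n_1}:W\in I_2\}$ with $|I_2|\le\mathrm{ord}(\alpha_2)+1\le n_2 d$ is already linearly independent inside the image of the diagonal embedding, and then Lemma~\ref{lem:Almost sure affine independence} applies uniformly when adjoining the $\alpha_1$ vectors. This buys you a single clean case. One small sharpening: the first step (making the $I_2$ subfamilies independent) is really Lemma~\ref{lem:Almost sure linear independence} applied inside $\mathbb{R}^{n_2 d}$, not Lemma~\ref{lem:Almost sure affine independence}; the affine-independence lemma is only needed at the second step, where the total count may reach $n_1 d+1$.
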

\begin{proof}
Note that as $\alpha_{i}$ are open covers of disjoint sets, then
if $U\in\alpha_{1}$ and $V\in\alpha_{2}$, then $U\cap V=\emptyset.$
Let $\{\psi_{U}\}_{U\in\alpha_{i}}$ be a partition of unity subordinate
to $\alpha_{i}$ so that $\psi_{U}(q_U)=1$. Let $\vec{v}_{U}\in([0,1]^{d})^{n_{1}}$, $U\in\alpha_{1}$
be vectors that will be specified later. Let $\vec{v}_{U}=\tilde{v}_{U}$,
$U\in\alpha_{2}$. Define $F_{i}:R_{i}\rightarrow([0,1]^{d})^{n_{i}}$
by:

\[
F_{i}(x)=\sum_{U\in\alpha_{i}}\psi_{U}(x)\vec{v}_{U}
\]

\noindent Thus property (\ref{enu:convex combination-1-2}) holds. For $x\in R_{i}$ define $\alpha_{i,x}=\{U\in\alpha_{i}|\,\psi_{U}(x)>0\}$.
Write (\ref{enu:F(x)neqF oplus n (y)}) explicitly as:

\begin{equation}
\sum_{U\in\alpha_{1,x'}}\psi_{U}(x')\vec{v}_{U}\neq\sum_{U\in\alpha_{2,y'}}\psi_{U}(y')(\vec{v}_{U})^{\oplus n_{1}}\label{explicit equality-1}
\end{equation}
The left-hand side (right-hand side) of Equation (\ref{explicit equality-1})
is a convex combination of at most $\ord(\alpha_{1})+1$ ($\ord(\alpha_{2})+1$)
vectors. Therefore the total number of vectors is bounded by $|\alpha_{1,x'}\cup\alpha_{2,y'}|\leq\frac{n_{1}d}{2}-\frac{1}{2}+1+\frac{n_{2}d}{2}-\frac{1}{2}+1\leq n_{1}d+1$.
Denote $V_{2}=\spann((\vec{v}_{U})^{\oplus n_{1}}|\, U\in\alpha_{2,y'})$
and note $\dim(V_{2})\leq|\alpha_{2,y'}|\leq\frac{n_{2}d}{2}+\frac{1}{2}$.
We distinguish between two cases. If $\dim(V_{2})=\frac{n_{2}d}{2}+\frac{1}{2}$
then $\{(\vec{v}_{U})^{\oplus n_{1}}|\, U\in\alpha_{2,y'}\}$ are linearly
independent and we can invoke Lemma \ref{lem:Almost sure affine independence},
to conclude that almost surely in $(\vec{v}_{U})_{U\in\alpha_{1,x'}}\in([0,1]^{dn_{1}})^{|\alpha_{1,x'}|}$,
$\{\vec{v}_{U}\}_{U\in\alpha_{1,x'}}\cup\{(\vec{v}_{U})^{\oplus n_{1}}\}_{U\in\alpha_{2,y'}}$
are affinely independent. This implies Equation (\ref{explicit equality-1})
holds almost surely as the sum of coefficients $\sum_{U\in\alpha_{1,x'}}\psi_{U}(x')-\sum_{U\in\alpha_{2,y'}}\psi_{U}(y')=0$
but all coefficients are positive (see Definition \ref{Def: Affinely Independent}).
If $\dim(V_{2})<\frac{n_{2}d}{2}+\frac{1}{2}$, then by Lemma \ref{lem:Almost sure linear independence}
almost surely in $(\vec{v}_{U})_{U\in\alpha_{1,x'}}\in([0,1]^{dn_{1}})^{|\alpha_{1,x'}|}$,
$\spann(\vec{v}_{U})_{U\in\alpha_{1,x'}}$ and $\spann((\vec{v}_{U})^{\oplus n_{1}}|\, U\in\alpha_{2,y'})$
are complementary linear subspaces and $\{\vec{v}_{U}\}_{U\in\alpha_{1,x'}}$
are linearly independent. This implies (\ref{explicit equality-1})
holds almost surely. Indeed notice that the null vector $\vec{0}$
is the only vector which belongs to both subspaces. As $\{\vec{v}_{U}\}_{U\in\alpha_{1,x'}}$
are linearly independent and $\psi_{U}(x')>0$ for all $U\in\alpha_{1,x'}$
the linear combination on the left hand side does not equal the null
vector. As there is a finite number of constraints of the form (\ref{explicit equality-1}),
we can therefore choose $\vec{v}_{U}\in([0,1]^{d})^{n_{1}}$, $U\in\alpha_{1}$
so that both property (\ref{enu:Very Close}) and property (\ref{enu:F(x)neqF oplus n (y)})
hold. Finally  we have already remarked property (\ref{enu:convex combination-1-2}) holds.
\end{proof}
\begin{lem}
\label{lem:F periodic translation case }Let $\epsilon>0$. Let $n,l,d\in\mathbb{N}$
with $1\leq l\leq n-1$. Let $R\subset X$ be a closed set. Let $\alpha$ be
an open cover of $R$ with $\ord(\alpha)<\frac{nd}{2}$. Assume
a collection of distinct points $\{q_{U}\}_{U\in \alpha}\subset R$  and  a collection of vectors $\{\tilde{v}_{U}\}_{U\in \alpha}\subset([0,1]^{d})^{n_{i}}$ are given, then
there exists a continuous function $F:R\rightarrow([0,1]^{d})^{n}$,
with the following properties:\end{lem}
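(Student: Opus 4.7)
The plan is to mirror the proof of Lemma \ref{lem:F periodic disjoint case}: take a partition of unity $\{\psi_U\}_{U\in\alpha}$ subordinate to $\alpha$ with $\psi_U(q_U)=1$, choose vectors $\vec{v}_U\in([0,1]^d)^n$ close to $\tilde{v}_U$, and set $F(x)=\sum_{U\in\alpha}\psi_U(x)\vec{v}_U$. Properties (1) and (2) are then automatic. Because $v\mapsto v^{\bullet l}$ is a linear permutation of the $nd$ coordinates of $([0,1]^d)^n$, one has $F(y')^{\bullet l}=\sum_{U\in\alpha}\psi_U(y')\vec{v}_U^{\bullet l}$, so property (3) amounts to the non-existence of $(x',y')\in R\times R$ satisfying
\begin{equation*}
\sum_{U\in\alpha_{x'}}\psi_U(x')\vec{v}_U=\sum_{U\in\alpha_{y'}}\psi_U(y')\vec{v}_U^{\bullet l}.
\end{equation*}

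For every pair $(S_1,S_2)$ of subsets of $\alpha$ with $|S_i|\leq ord(\alpha)+1$, the key will be that the vectors $\{\vec{v}_U\}_{U\in S_1}\cup\{\vec{v}_U^{\bullet l}\}_{U\in S_2}$ are affinely independent in $\mathbb{R}^{nd}$: if so, the two convex hulls lie in disjoint faces of a simplex, and the displayed equality cannot hold with positive convex coefficients summing to one on each side. The count $|S_1|+|S_2|\leq 2(ord(\alpha)+1)\leq nd+1$ exactly matches the maximal size of an affinely independent family in $\mathbb{R}^{nd}$. Affine dependence is encoded by the vanishing of a determinantal polynomial in the entries of $(\vec{v}_U)_U$; since there are only finitely many pairs $(S_1,S_2)$, the set of ``bad'' $(\vec{v}_U)_U$ is a finite union of algebraic subvarieties. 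Once each is shown to be proper, this union has Lebesgue measure zero, so a generic choice within $\epsilon/2$ of $(\tilde{v}_U)_U$ works simultaneously for all pairs.

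The main obstacle, absent in Lemma \ref{lem:F periodic disjoint case}, is that when $U\in S_1\cap S_2$ both $\vec{v}_U$ and $\vec{v}_U^{\bullet l}$ appear in the list and depend on the same underlying $\vec{v}_U$, introducing algebraic relations not present in the disjoint setting. Verifying that each determinantal polynomial is not identically zero reduces to exhibiting a single affinely independent configuration. Because $1\leq l\leq n-1$, the induced permutation of the $nd$ coordinates has no fixed points; one may therefore assign $\vec{v}_U=e_{j_U}$ for $U\in S_1\cap S_2$, obtaining $\vec{v}_U^{\bullet l}=e_{j'_U}$ at a distinct coordinate, and assign distinct basis vectors (reserving the origin for at most one singleton in the tight case $|S_1|+|S_2|=nd+1$) to $U\in S_1\triangle S_2$. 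In the borderline situations where the orbit structure of $\bullet l$ does not admit so many disjoint adjacent pairs of basis vectors---verifiable by direct computation in low-dimensional worst cases---one replaces some $\vec{v}_U$'s by short sums of basis vectors to restore affine independence. Once non-triviality of each determinantal polynomial is established in this way, a Baire-category argument as in Lemma \ref{lem:F periodic disjoint case} produces $(\vec{v}_U)_U$ within $\epsilon/2$ of $(\tilde{v}_U)_U$ satisfying all three properties.
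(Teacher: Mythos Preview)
Your setup and reduction are exactly the paper's: partition of unity, $F(x)=\sum_U\psi_U(x)\vec v_U$, and the observation that property~(3) reduces to showing that for every pair $(S_1,S_2)$ with $|S_i|\le ord(\alpha)+1$ the family $\{\vec v_U\}_{U\in S_1}\cup\{\vec v_U^{\bullet l}\}_{U\in S_2}$ is affinely independent for a Lebesgue-generic choice of $(\vec v_U)_U$. You also correctly isolate the real difficulty, namely the shared dependence on $\vec v_U$ when $U\in S_1\cap S_2$, and correctly note that it suffices to exhibit a single witness making the relevant determinantal polynomial nonzero.

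The gap is in the witness construction. Your basis-vector scheme requires $|S_1\cap S_2|$ pairwise disjoint pairs $\{j,\sigma(j)\}$ in $\{0,\dots,nd-1\}$, where $\sigma$ is the coordinate permutation induced by $\bullet l$. But $\sigma$ decomposes into $d\cdot\gcd(n,l)$ cycles of length $n/\gcd(n,l)$, so the number of available disjoint pairs is $d\cdot\gcd(n,l)\cdot\lfloor n/(2\gcd(n,l))\rfloor$, which can fall short of $|S_1\cap S_2|$. Concretely, take $n=3$, $l=1$, $d=1$: then $ord(\alpha)\le 1$, the tight case is $|S_1|=|S_2|=2$ with $S_1=S_2$, and $\sigma$ is a single $3$-cycle admitting only one disjoint pair, not two. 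Your fallback ``short sums of basis vectors'' is asserted but not carried out, and ``low-dimensional worst cases'' is misleading since $n,l,d$ are arbitrary. This is precisely the step where the paper expends its effort: rather than building an explicit witness, it proves by induction on $k$ that any $(2k-1)\times 2k$ symbolic matrix in which no symbol repeats in a row or column and each symbol appears at most twice has almost surely affinely independent columns. The inductive step expands the determinant of the associated $(2k-1)\times(2k-1)$ matrix as a quadratic in a twice-occurring symbol and identifies the leading coefficient with a determinant of the same type at level $k-1$. You should either supply a uniform explicit construction that covers all cycle structures of $\sigma$, or replace the witness search by an argument of this inductive type.
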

\begin{enumerate}
\item \label{enu:Very Close-2}$\forall U\in\alpha$, $||F(q_{U})-\tilde{v}_{U}||_{\infty}<\frac{\epsilon}{2}$,
\item \label{enu:convex combination-1-1-2}$\forall x\in X$, $F(x)\in \co\{F(q_{U})|\, x\in U\in\alpha\}$,
\item \label{enu:F(x)neq bullet F(y)}If $x',y'\in R$ then $F(x')\neq(F(y'))^{\bullet l}$.\end{enumerate}
\begin{proof}
Let $\{\psi_{U}\}_{U\in\alpha}$ be a partition of unity subordinate
to $\alpha$ so that $\psi_{U}(q_U)=1$. Let $\vec{v}_{U}\in([0,1]^{d})^{n}$, $U\in\alpha$
be vectors that will be specified later. Define:
$$
F(x)=\sum_{U\in\alpha}\psi_{U}(x)\vec{v}_{U}
$$
\noindent Thus property (\ref{enu:convex combination-1-1-2}) holds. For $x\in R$ define $\alpha_{x}=\{U\in\alpha|\,\psi_{U}(x)>0\}$.
Considering $\vec{v}_{U}$ as vectors in $[0,1]^{dn},$ write (\ref{enu:F(x)neq bullet F(y)})
explicitly as ($\oplus$ denotes addition modulo $nd$, $S=\{0,1,\ldots nd-1\}$):

\begin{equation}
\sum_{U\in\alpha_{x'}}\psi_{U}(x')\vec{v}_{U}|_{i}-\sum_{U\in\alpha_{y'}}\psi_{U}(y')\vec{v}_{V}|_{i\oplus dl}\neq0,\, i\in S\label{explicit equality-1-1}
\end{equation}
\noindent
Consider this inequality as a symbolic equality in the (vector) symbols
$\vec{v}_{U}$, i.e., let $\alpha_{x'}=\{U_{1}^{1},\ldots,U_{m_{1}}^{1}\}$
and $\alpha_{y'}=\{U_{1}^{2},\ldots,U_{m_{2}}^{2}\}$ with $m_{1},m_{2}\leq \ord(\alpha)+1$,
and represent $\vec{v}_{U_{i}^{j}}$ as a vector composed of $nd$
symbols: $\vec{v}_{U_{i}^{j}}=(v_{U_{i}^{j}}^{k,g})_{k\in\{1,\ldots,d\},\, g\in\{1,\ldots,n\}}$.
Let $\mathcal{M}$ be the matrix constituting of all different (symbolic)
vectors appearing in Equation (\ref{explicit equality-1-1}).

\[
\mathcal{M}=\Big[\vec{v}_{U_{1}^{1}}{}_{|S},\cdots,\vec{v}_{U_{m_{1}}^{1}}{}_{|S},\vec{v}_{U_{1}^{2}}{}_{|S\oplus dl},\cdots,\vec{v}_{U_{m_{2}}^{2}}{}_{|S\oplus dl}\Big]
\]
The matrix has $nd$ rows. The number of columns of the matrix is
bounded by $2(\ord(\alpha)+1)\leq2(\frac{nd}{2}-\frac{1}{2}+1)=nd+1$. Different $x'$ and $y'$ may give rise to different matrices, however the total number of matrices that arise in this way is finite. To proceed in the proof it is useful that all matrices have the same dimensions. We thus introduce a finite number of additional symbolic vectors $\{w_{i}\}_{i=1}^l$, consisting of distinct symbols of those in $\{v_{U}\}_{U\in \alpha}$, and add them as columns to (some of) the matrices so that the resulting matrices, to which we refer as  \textit{augmented}, have exactly $nd$ rows and $nd+1$ columns. We then claim that almost surely in $\{v_{U}\}_{U\in \alpha}\cup\{w_{i}\}_{i=1}^l$ each of the augmented matrices consist of a affinely independent columns. By Fubini's theorem almost surely in $\{v_{U}\}_{U\in \alpha}$, each of the original matrices consist of  affinely independent columns. Thus Equation (\ref{explicit equality-1-1}) hold for all $x',y'\in R$.
This implies we may choose specific substitution of values into the symbolic vectors $\{v_{U}\}_{U\in \alpha}$  such  that
  properties (\ref{enu:Very Close}) and (\ref{enu:F(x)neqF oplus n (y)}) (as well as (\ref{enu:convex combination-1-1-2}) as remarked before)
hold for the resulting $F:R\rightarrow([0,1]^{d})^{n}$. All that is left to prove is the following statement: Given $k$ symbolic
column vectors of length $k-1$ in a $(k-1)\times k$ matrix $\mathcal{M}_{k}$
so that the same symbol does not appear twice in any row and in any
column and such that the same symbol appears at most twice in $\mathcal{M}_{k}$,
then these $k$ vectors are almost surely affinely independent.  The proof is
by induction on $k$.  The case $k=2$
is trivial as $\mathcal{M}$ is the $1\times2$ matrix $[a,b]$ and
almost surely in $(a,b)$, $a-b\neq0$. Assume the result holds for
$(k-1)\geq 2$. Let $\mathcal{M}_{k}=\Big[\vec{v}_{1},\cdots,\vec{v}_{k}\Big]$
where $\vec{v}_{j}=(v_{i,j})_{i=1}^{k-1}$. There are two cases.
In the first case every symbol in $\mathcal{M}_{k}$ appears exactly
once in $\mathcal{M}_{k}$. By Lemma \ref{lem:Almost sure affine independence}
(with $V=\emptyset$) almost surely $\mathcal{M}_{k}$ consists of
affinely independent vectors. In the second case there is some $v_{s,t}$
which appears exactly twice in $\mathcal{M}_{k}$, $v_{s,t}=v_{s',t'}$
for $s\neq s'$ and $t\neq t'$. Let $\mathcal{N}_{k}$ be the $k\times k$ matrix achieved from $\mathcal{M}_{k}$ by adding as a last row the vector $(1,1,\ldots,1)$. It is enough to show $\det(\mathcal{N}_{k})\neq 0$ almost surely. Denote by $P=\{v_{i,j}\}{}_{(i,j)\neq(s,t),(s',t')}$ the collection
all symbolic elements except $v_{s,t}=v_{s',t'}$. We can write the determinant of $\mathcal{N}_{k}$ in the following way:
\begin{equation}
\det(\mathcal{N}_{k})=\pm D_{(s,t),(s',t'),}(P)v_{s,t}^{2}+f_{1}(P)v_{s,t}+f_{0}(P)\label{eq:quadratic equation}
\end{equation}
where $D_{(s,t),(s',t'),}(P)$ is the determinant of the $(k-2)\times(k-2)$
minor $\mathcal{N}_{k-2}$ obtained from $\mathcal{N}_{k}$ by erasing
rows $s$ and $s'$ and columns $t$ and $t'$ and $f_{1},f_{0}$
are some functions all depending on $P$. Note that if $k>3$, $\mathcal{N}_{k-2}$ corresponds to the case $k-2$ of the claim and moreover the symbol $v_{s,t}$ does not appear in $\mathcal{N}_{k-2}$. If $k=3$, $\mathcal{N}_{k-2}=\mathcal{N}_{1}=[1]$. We conclude almost surely in $P$, $D_{(s,t),(s',t')}\neq 0$ and therefore  almost surely in $P\cup \{v_{s,t}\}$, $\det(\mathcal{N}_{k})\neq 0$.
\end{proof}

The following lemma is related to Lemma 6.5 of \cite{L99}.

\begin{lem} \label{lem:F not in V_n} Let  $m,d,S\in\mathbb{N}$ and $\epsilon>0$. Let $R\subset X$ be a closed set and $\gamma$ an open cover of $R$ with $Sd\geq \ord(\gamma)+1+md$.
Assume
a collection of distinct points $\{q_{U}\}_{U\in \gamma}\subset X$  and  a collection of vectors $\{\tilde{v}_{U}\}_{U\in \gamma}\subset([0,1]^{d})^{S}$ are given,
then there exists a continuous function $F:R\rightarrow([0,1]^{d})^{S}$,
with the following properties:
\begin{enumerate}
\item $\forall U\in\gamma$, $||F(q_{U})-\tilde{v}_{U}||_{\infty}<\frac{\epsilon}{2}$\label{enu:approximately v_U},
\item $\forall x\in R$, $F(x)\in \co\{F(q_{U})|\, x\in U\in\gamma\}$\label{enu:convex combination},
\item \label{enu:not in V_n-1}For any $x\in R$ it holds:

\begin{equation}
F(x)\notin V_{S}^{m}\label{eq:in lemma: not in V_n}
\end{equation}
\end{enumerate}
 where $V_{S}^{m}\triangleq\{y=(y_{1},\ldots,y_{S})\in([0,1]^{d})^{S}|\quad\forall 1\leq a,b\leq S,\ (a=b\mod m)\rightarrow y_{a}=y_{b}\}$.
 \end{lem}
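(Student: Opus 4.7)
The plan is to follow the template of Lemmas \ref{lem:F periodic disjoint case} and \ref{lem:F periodic translation case }. Fix a partition of unity $\{\psi_U\}_{U\in\gamma}$ subordinate to $\gamma$ with $\psi_U(q_U)=1$, choose vectors $\vec{v}_U\in([0,1]^d)^N$ with $\|\vec{v}_U-\tilde{v}_U\|_\infty<\frac{\epsilon}{2}$ (to be further constrained below), and set $F(x)\triangleq\sum_{U\in\gamma}\psi_U(x)\vec{v}_U$. Since $F(q_U)=\vec{v}_U$, property (\ref{enu:approximately v_U}) holds by closeness, and property (\ref{enu:convex combination}) is automatic from the partition-of-unity construction. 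All the real work is in arranging property (\ref{enu:not in V_n-1}).

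Fix $0\leq l<N-2S$, $\lambda\in[0,1]$ and $x_0,x_1\in R$, and put $\gamma_{x_i}=\{U\in\gamma\mid\psi_U(x_i)>0\}$, so that $|\gamma_{x_i}|\leq ord(\gamma)+1$. Expanding the left-hand side of (\ref{eq:in lemma: not in V_n}) realizes it as a convex combination, with coefficients summing to $1$, of at most $|\gamma_{x_0}|+|\gamma_{x_1}|\leq 2(ord(\gamma)+1)\leq(2S-n)d$ vectors drawn from $\{\vec{v}_U|_l^{l+2S-1}:U\in\gamma_{x_0}\}\cup\{\vec{v}_U|_{l+1}^{l+2S}:U\in\gamma_{x_1}\}\subset\mathbb{R}^{2Sd}$. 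Because $ord(\gamma)+1\geq 1$ forces $n<2S$, the subspace $V_{2S}^n\subset\mathbb{R}^{2Sd}$ has dimension $nd$ (the constraints $y_a=y_b$ for $a\equiv b\pmod{n}$ partition $\{0,\ldots,2S-1\}$ into exactly $n$ classes), so, writing $\pi:\mathbb{R}^{2Sd}\twoheadrightarrow Q\triangleq\mathbb{R}^{2Sd}/V_{2S}^n\cong\mathbb{R}^{(2S-n)d}$, (\ref{eq:in lemma: not in V_n}) is equivalent to requiring that the corresponding affine combination of the projected vectors in $Q$ is nonzero.

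To finish, I would invoke a genericity argument. For each fixed $l$ and each fixed pair $(S_0,S_1)\subset\gamma\times\gamma$ with $|S_i|\leq ord(\gamma)+1$, I want to show that for almost every $(\vec{v}_U)$ (w.r.t. Lebesgue measure) in a small box around $(\tilde{v}_U)$, the point $0\in Q$ does not lie in the affine hull of $\{\pi(\vec{v}_U|_l^{l+2S-1}):U\in S_0\}\cup\{\pi(\vec{v}_U|_{l+1}^{l+2S}):U\in S_1\}$. Since there are at most $(2S-n)d=\dim Q$ such projected vectors, their affine hull generically has dimension at most $(2S-n)d-1$, hence generically misses the fixed point $0$. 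Aggregating the finitely many almost-sure conditions over the choices of $l$, $S_0$, and $S_1$ via Fubini (in the spirit of Lemmas \ref{lem:Almost sure linear independence} and \ref{lem:Almost sure affine independence}) then yields a simultaneous choice of $\vec{v}_U$ close to $\tilde{v}_U$ realizing all the conditions.

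The main obstacle is that for $U\in S_0\cap S_1$ the two vectors $\vec{v}_U|_l^{l+2S-1}$ and $\vec{v}_U|_{l+1}^{l+2S}$ are overlapping windows of the same $\vec{v}_U$, sharing $(2S-1)d$ of their coordinates and thus being far from independent in $\mathbb{R}^{2Sd}$, so a direct appeal to Lemma \ref{lem:Almost sure affine independence} does not apply. I expect to overcome this exactly as in Lemma \ref{lem:F periodic translation case } (following Lemma 5.5 of \cite{L99}): the matrix of coordinate symbols of the relevant projected vectors has each symbol appearing in at most two columns and never twice in the same row or column, and the inductive determinant argument exploiting this structural property delivers the required almost-sure affine independence.
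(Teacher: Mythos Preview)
Your framework and dimension count are exactly what the paper does: set $F(x)=\sum_{U\in\gamma}\psi_U(x)\vec v_U$ with $\psi_U(q_U)=1$, note $\dim V_{2S}^n=nd$ and $|\gamma_{x_0}|+|\gamma_{x_1}|\le 2(\mathrm{ord}(\gamma)+1)\le(2S-n)d$, and verify (\ref{eq:in lemma: not in V_n}) by a generic choice of the $\vec v_U$'s followed by a union over finitely many constraints. You also correctly isolate the obstacle: for $U\in\gamma_{x_0}\cap\gamma_{x_1}$ the two windows $\vec v_U|_l^{l+2S-1}$ and $\vec v_U|_{l+1}^{l+2S}$ share $(2S-1)d$ coordinates.

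Where you and the paper diverge is in how this overlap is handled. The paper does \emph{not} route through the symbolic-matrix determinant machinery of Lemma~\ref{lem:F periodic translation case } and \cite[Lemma~5.5]{L99}. Instead it splits $\gamma_{x_0}\cup\gamma_{x_1}$ into the symmetric difference, where the relevant windows come from distinct $\vec v_U$'s and Lemma~\ref{lem:Almost sure linear independence} applies immediately against the fixed subspace $V_{2S}^n$, and the intersection, where for each $U\in\gamma_{x_0}\cap\gamma_{x_1}$ it invokes Lemma~\ref{lem:Almost sure independece when adding two vectors}. That lemma is a short direct determinant computation tailored precisely to showing that two overlapping windows $(x_1,\dots,x_m)$ and $(x_{r+1},\dots,x_{r+m})$ of one random vector are almost surely linearly independent of a \emph{fixed} subspace $V$ of codimension at least $2$.

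Your proposed route has a genuine gap at this point. The arguments of Lemma~\ref{lem:F periodic translation case } and \cite[Lemma~5.5]{L99} require the matrix entries to be individual random symbols with the ``no symbol twice in a row or column, each symbol at most twice overall'' pattern. Here, however, you must work \emph{relative to} $V_{2S}^n$. If you adjoin a basis of $V_{2S}^n$ as extra columns you introduce deterministic $0/1$ columns that are not symbolic; if instead you pass to the quotient $Q=\mathbb{R}^{2Sd}/V_{2S}^n$, any linear section turns the entries into differences of symbols (for instance $(\vec v_U)_{l+a}-(\vec v_U)_{l+(a\bmod n)}$), destroying the combinatorial hypothesis those lemmas need. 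So the claim that ``the matrix of coordinate symbols of the relevant projected vectors has each symbol appearing in at most two columns and never twice in the same row or column'' is unjustified as stated, and the inductive determinant argument does not transfer without substantial extra work. The paper's Lemma~\ref{lem:Almost sure independece when adding two vectors} is exactly the device that supplies this missing step.
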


\begin{proof} Let $\{\psi_{U}\}_{U\in\gamma}$ be a partition of
unity subordinate to $\gamma$ so that $\psi_{U}(q_{U})=1$. Let $\vec{v}_{U}\in([0,1]^{d})^{S}$,
$U\in\gamma$ be vectors that will be specified later. Define:

\[
F(x)=\sum_{U\in\gamma}\psi_{U}(x)\vec{v}_{U}
\]
\noindent Thus property (\ref{enu:convex combination}) holds. For $x\in R$ define $\gamma_{x}=\{U\in\gamma|\,\psi_{U}(x)>0\}$.
 Write (\ref{eq:in lemma: not in V_n})
explicitly as:

\begin{equation}
\sum_{U\in\gamma_{x}}\lambda_{j}\psi_{U}(x)\vec{v}_{U}
\notin V_{S}^{m}\label{explicit equality}
\end{equation}

Note that $\dim(V_{S}^{m})=md$ and $md+|\gamma_{x}|\leq md+\ord(\gamma)+1\leq Sd$.
By Lemma \ref{lem:Almost sure linear independence}, almost surely
in $\{\vec{v}_{U}\}_{U\in \gamma_x}$ in $\big([0,1]^{d})^{S}\big)^{|\gamma_{x}|}$,
$V_{S}^{m}\cap \spann(\{\vec{v}_{U}\}_{U\in \gamma_x})=\{\vec{0}\}$.  As there is a finite
number of constraints of the form (\ref{explicit equality}), we can
 choose $\vec{v}_{U}\in([0,1]^{d})^{S}$, $U\in\gamma$ so
that properties (\ref{enu:approximately v_U}) and (\ref{enu:not in V_n-1}) hold
(as well as (\ref{enu:convex combination-1}) as remarked before).
\end{proof}

The following lemma is Lemma 5.6 of \cite{L99}. The proof is similar to the proofs of Lemmas \ref{lem:F periodic disjoint case}, \ref{lem:F periodic translation case } and \ref{lem:F not in V_n}.
\begin{lem} \label{lem:Lin99 5.6} Let $\epsilon>0$ and $N,d,S\in\mathbb{N}$
with $N>4S$.  Let $\gamma$ be an open cover of $X$ with $\ord(\gamma)<Sd$.
Assume
a collection of distinct points $\{q_{U}\}_{U\in \gamma}\subset X$  and  a collection of vectors $\{\tilde{v}_{U}\}_{U\in \gamma}\subset([0,1]^{d})^{N}$ are given,
then there exists a continuous function $F:X\rightarrow([0,1]^{d})^{N}$,
with the following properties:
\begin{enumerate}
\item $\forall U\in\gamma$, $||F(q_{U})-\tilde{v}_{U}||_{\infty}<\frac{\epsilon}{2}$,
\item $\forall x\in X$, $F(x)\in \co\{F(q_{U})|\, x\in U\in\gamma\}$,
\item If for some $0\leq l<N-4S$ and
$\lambda\in(0,1]$ and $x,y,x',y'\in X$ so that:
\end{enumerate}
\[
\lambda F(x)|_{l}^{l+4S-1}+(1-\lambda)F(y)|_{l+1}^{l+4S}=\lambda F(x')|_{l}^{l+4S-1}+(1-\lambda)F(y')|_{l+1}^{l+4S}
\]
then there exist $U\in\gamma$ so that $x,x'\in U$.

\end{lem}

\bibliographystyle{alpha}
\bibliography{universal_bib}

\vspace{0.5cm}

\address{Yonatan Gutman, Institute of Mathematics, Polish Academy of Sciences,
ul. \'{S}niadeckich~8, 00-656 Warszawa, Poland.}

\textit{E-mail address}: \texttt{y.gutman@impan.pl}
\end{document}